\newtheorem{theorem}{Theorem}[section]
\newtheorem{lemma}[theorem]{Lemma}
\newtheorem{proposition}[theorem]{Proposition}
\newtheorem{corollary}[theorem]{Corollary}
 \theoremstyle{definition}
 \newtheorem{definition}[theorem]{Definition}
 \DeclareMathOperator{\Ext}{Ext}
 \DeclareMathOperator{\Hom}{Hom}
 \DeclareMathOperator{\Tor}{Tor}
 \DeclareMathOperator{\colim}{colim}
 \DeclareMathOperator{\invlim}{lim}
 \DeclareMathOperator{\cok}{cok}
 \DeclareMathOperator{\im}{Im}
 \DeclareMathOperator{\Ho}{Ho}
 \DeclareMathOperator{\Stmod}{Stmod}
\newcommand{\uc}{\textup{:}}
\newcommand{\usc}{\textup{;}}
\newcommand{\ulp}{\textup{(}}
\newcommand{\urp}{\textup{)}}
\newcommand{\cat}[1]{\mathcal{#1}}           
\newcommand{\tensor}{\otimes}
\newcommand{\class}[1]{\mathcal{#1}}   
\newcommand{\N}{\mathbb{N}}
\newcommand{\Z}{\mathbb{Z}}
\newcommand{\Q}{\mathbb{Q/Z}}
\newcommand{\mathcolon}{\colon\,} 
\newcommand{\ar}{\xrightarrow{}} 
\newcommand{\ch}{\textnormal{Ch}(R)}
\newcommand{\rmod}{R\text{-Mod}}
\newcommand{\ideal}[1]{\mathfrak{#1}}
\newcommand{\rightperp}[1]{#1^{\perp}}
\newcommand{\leftperp}[1]{{}^\perp #1}
\newcommand{\homcomplex}{\mathit{Hom}}
\begin{document}

\title{The stable module category of a general ring}

\author{Daniel Bravo}
 \address{Instituto de Ciencias F\'isica y Matem\'aticas \\
 Universidad Austral de Chile \\
 Campus Isla Teja \\
 Valdivia, Chile}
 \email{danielbravovivallo@gmail.com}

 \author{James Gillespie}
 \address{Ramapo College of New Jersey \\
          School of Theoretical and Applied Science \\
          505 Ramapo Valley Road \\
          Mahwah, NJ 07430}
 \email[Jim Gillespie]{jgillesp@ramapo.edu}
 \urladdr{http://phobos.ramapo.edu/~jgillesp/}

 \author{Mark Hovey}
 \address{Wesleyan University \\
          Middletown, CT 06459}
 \email{mhovey@wesleyan.edu}

\subjclass[2010]{16E05 (primary), 13C10, 13C11, 13D25, 16D40, 16D50}

\date{\today}

\begin{abstract}
For any ring $R$ we construct two triangulated categories, each
admitting a functor from $R$-modules that sends projective and
injective modules to $0$.  When $R$ is a quasi-Frobenius or Gorenstein
ring, these triangulated categories agree with each other and with the
usual stable module category.  Our stable module categories are
homotopy categories of Quillen model structures on the category of
$R$-modules.  These model categories involve generalizations of
Gorenstein projective and injective modules that we derive by
replacing finitely presented modules by modules of type $FP_{\infty}$.
\end{abstract}

\maketitle

\section{Introduction}

This paper is about the generalization of Gorenstein homological
algebra to arbitrary rings.  Gorenstein homological algebra first
arose in modular representation theory, where one looks at
representations of a finite group $G$ over a field $k$ whose
characteristic divides the order of $G$.  In this case, the group ring
$k[G]$ is quasi-Frobenius, which means that projective and injective
modules coincide.  It is natural, then, to ignore them and form the
stable module category $\Stmod (k[G])$ by identifying two
$k{G}$-module maps $f,g\mathcolon M\xrightarrow{}N$ when $g-f$ factors
through a projective module.  The stable module category is the main
object of study in modular representation theory.  For example, the
Tate cohomology of $G$ is just $\Stmod (k[G]) (k,k)_{*}$.

One could perform this construction for any ring $R$, but the
resulting category only has good properties when the ring is
quasi-Frobenius.   In this case, the resulting category $\Stmod (R)$
is not abelian, but it is triangulated.  The shift functor $\Sigma M$
is given by taking the cokernel of a monomorphism from $M$ into an
injective module; this is unique up to isomorphism in $\Stmod (R)$.
There is an exact coproduct-preserving functor
\[
\gamma \mathcolon \rmod \xrightarrow{} \Stmod (R),
\]
exact in the sense that if
\[
0 \xrightarrow{} M' \xrightarrow{f} M \xrightarrow{g} M'' \xrightarrow{} 0
\]
is a short exact sequence in $\rmod $, then there is an exact triangle
\[
M'\xrightarrow{} M \xrightarrow{g} M'' \xrightarrow{h} \Sigma M'
\]
in the triangulated category $\Stmod (R)$.  Note that $\gamma$ takes
projective (and injective, since they coincide) modules to $0$, and
also that $\gamma$ is universal with respect to this property.  That
is, given any other exact functor $\delta \mathcolon \rmod
\xrightarrow{}\cat{C}$ into a triangulated category that sends
projectives to $0$, there is a unique induced exact functor
$\overline{\delta}\mathcolon \Stmod (R)\xrightarrow{}\cat{C}$ such
that $\overline{\delta}\gamma =\delta$.  In fact, $\overline{\delta}$
is just $\delta$, since by assumption $\delta$ sends projective
modules to $0$ and therefore identifies $f$ and $g$ when their
difference factors through a projective module.

We have often wondered whether it would be possible to give a
construction of $\Stmod (R)$ for a general ring $R$, and it is the
goal of this paper to make such a construction.  We would like $\Stmod
(R)$ to have similar properties; it should be triangulated with an
exact functor $\gamma \mathcolon \rmod \xrightarrow{}\Stmod (R)$ that
sends projectives and injectives to $0$, and it should be somehow
universal with respect to this property.  It should also be natural
with respect to at least some ring homomorphisms, and of course it
should give the same answer in cases where the stable module category
has already been defined (this includes the case when $R$ is
Gorenstein~\cite{hovey-cotorsion}).

We began this paper by considering the stable derived category of
Krause~\cite{krause-stable}.  This
is the chain homotopy category of exact (unbounded) complexes of injective
modules.  Krause only considers this for Noetherian commutative rings $R$
(actually for Noetherian schemes), but most of his construction works
more generally.  The stable derived category is triangulated, and
there is an exact functor from $\rmod$ into it that sends projectives
to $0$, but it seems unlikely that it sends injectives to $0$.
(Explicit examples are hard to understand except in the easiest
cases).  There is also no hint that it is universal.  However, the
stable derived category does coincide with the stable module category
when $R$ is Gorenstein.

The stable derived category suggests approaching the stable module
category through unbounded complexes, rather than modules.  We give
two general constructions of Quillen model structures on unbounded
chain complexes, one where everything is cofibrant and the fibrant
objects are certain complexes of injectives, and one where everything
is fibrant and the cofibrant objects are certain complexes of
projectives.  We recover the stable derived category as the homotopy
category of one such model structure.  However, the key idea comes
from Gorenstein homological algebra, where one considers totally
acyclic complexes.  In the injective case, these are exact complexes
of injectives that remain exact after applying $\Hom (I,-)$ for any
injective module $I$.  This has always seemed a strange idea, but this
is the key to constructing a homotopy category where the injectives go
to $0$ as well as the projectives.

The definition of totally acyclic is really only appropriate for
Noetherian rings $R$; perhaps the main contribution of this paper is
describing how Gorenstein homological algebra should work for general
rings $R$.  The idea is as follows.  An injective module is a module
$I$ such that $\Ext^{1} (M,I)=0$ for all modules $M$, but in fact it
is equivalent to assume this for all finitely generated modules $M$.
But finitely generated modules are only well-behaved for Noetherian
rings.  One might try considering absolutely pure modules $I$ instead;
these are modules such that $\Ext^{1} (M,I)=0$ for all finitely
presented $M$.  But finitely presented modules are only well-behaved
over coherent rings.  So what we really need are modules which have
not only finite generators and relations, but finite relations between
the relations, and so on.  A module $M$ is said to have type
$FP_{\infty}$ if it has a projective resolution by finitely generated
projectives.  We can then define a module $I$ to be \textbf{absolutely
clean}, thinking of clean as a little weaker than
pure, if $\Ext^{1} (M,I)=0$ for all $M$ of type $FP_{\infty}$.
Finally, we define a complex $X$ of injectives to be \textbf{exact
AC-acyclic} if it is exact and $\Hom (I,X)$ remains exact for any
absolutely clean module $I$.  This is our generalization of
totally acyclic.

We then get a Quillen model structure on chain complexes where
everything is cofibrant and the fibrant objects are the exact
AC-acyclic complexes of injectives, for any ring $R$.  The homotopy
category of this model structure is our candidate for the stable
module category; it is the chain homotopy category of exact AC-acyclic
complexes of injectives.  Further evidence that this is the right thing is the
fact that it is indeed a homotopy category of modules, rather than
just complexes.  That is, there is a model structure on $\rmod $ whose
homotopy category is this stable module category.  In this model
structure, everything is cofibrant and the fibrant objects are the
\textbf{Gorenstein AC-injective} modules; these are the zero-cycles of
exact AC-acyclic complexes of injectives.  Our stable module category
is then equivalent to the quotient category of Gorenstein
AC-injective modules where two maps are identified when their
difference factors through an injective module.  Note, however, that
we need the model structure on complexes to approach the one on
modules; we cannot work directly with modules.

We have concentrated here on the injective case, but we also construct
analogous model structures using exact complexes of projectives.
There is a similar notion of an exact AC-acyclic complex of
projectives, and we get a likely different notion of a stable module
category. This is the chain homotopy category of exact AC-acyclic
complexes of projectives, or equivalently, the quotient category or
Gorenstein AC-projective modules obtained by identifying maps that
factor through a projective.

We have had to leave many loose ends in this paper that we hope to
address in future work.  We need more explicit examples.  It seems
likely to us that our two notions of stable module category likely
agree with some conditions on the ring; perhaps when the ring has a
dualizing module.  And our construction of the stable module category
includes a generalization of Tate cohomology to all algebras over $k$,
and in particular to all groups, finite or not.  Such generalizations
of Tate cohomology have been considered by many authors; in
particular, Benson uses modules of type $FP_{\infty}$ in his
generalization~\cite{benson-infinite}, but in a different way.  We would like to
understand the precise relationship between these approaches.  

This paper grew out of the Ph.\ D. thesis of the first
author~\cite{bravo}.  We also mention the work of Hanno
Becker~\cite{becker}, who independently constructed some of our model
categories in the Noetherian case and has a very nice interpretation
of the recollement constructed by Krause in terms of model categories.

Throughout this paper, $R$ will denote a ring with unity, and $\rmod$
will denote the category of left $R$-modules.  

\section{Finiteness, flatness, and injectivity}\label{sec-finite}

Homological algebra of any sort is about approximating objects by
projective, injective, and flat objects.  In this section, we point
out that the notions of flatness and injectivity depend on a choice of
which modules are considered to be finite, and that the usual choices
(finitely generated or finitely presented modules) may not be
appropriate for general rings.  Instead, we use the modules of type
$FP_{\infty}$ as our finite objects.  The ``injective'' modules for
this choice are the absolutely clean modules and the ``flat'' modules
are the level modules.  Over any ring $R$, these modules have
properties that injective modules only have over Noetherian rings and
flat modules only have over coherent rings.  They are also dual to
each other under taking character modules.  The drawback of this
approach is that for some non-coherent rings, the only modules of type
$FP_{\infty}$ are free, so that every module is absolutely clean and
level. 

More precisely, let us recall that a nonempty full subcategory
$\cat{D}$ of an abelian category is called \textbf{thick} if it is
closed under summands and whenever we have a short exact sequence
\[
0 \xrightarrow{} M'\xrightarrow{} M \xrightarrow{} M''\xrightarrow{}0
\]
where two of the three entries are in $\cat{D}$, then the third entry
is also in $\cat{D}$.  

The following proposition is well-known. 

\begin{proposition}\label{prop-thick}
Let $R$ be a ring. 
\begin{enumerate}
\item The category of finitely generated left $R$-modules is thick if
and only if $R$ is left Noetherian.  
\item The category of finitely presented left $R$-modules is thick if
and only if $R$ is left coherent.  
\end{enumerate}
\end{proposition}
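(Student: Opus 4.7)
The plan is to reduce the thickness condition in each part to a single nontrivial closure property and recognize it as the defining characterization of Noetherian (respectively coherent) rings. Thickness asks for closure under direct summands together with the two-out-of-three property for short exact sequences $0 \to M' \to M \to M'' \to 0$. In both cases, the direct summand closure is standard, and of the three positions two are automatic. The middle entry $M$ being finitely generated (resp.\ finitely presented) whenever the outer two are is just the extension property. The rightmost entry $M''$ is automatic because any quotient of a finitely generated module is finitely generated, and any quotient of a finitely presented module by a finitely generated submodule is finitely presented (choose a finite free presentation of $M$ and enlarge the relations to include lifts of generators of $M'$). Thus the crux in each part is the leftmost position: given $M$ and $M''$ finitely generated (resp.\ finitely presented), must $M'$ be as well?

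For part (1) this crux is literally the Noetherian condition. If $R$ is left Noetherian, every submodule of a finitely generated module is finitely generated, and we are done. Conversely, if the finitely generated modules form a thick subcategory, then for any left ideal $I \subseteq R$ we apply the two-out-of-three property to $0 \to I \to R \to R/I \to 0$: since $R$ and $R/I$ are cyclic, $I$ must be finitely generated, so $R$ is left Noetherian.

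For part (2) the analogous crux requires an additional classical input, which I expect to be the main technical step. The input is: in a short exact sequence $0 \to M' \to M \to M'' \to 0$ with $M$ finitely generated and $M''$ finitely presented, the kernel $M'$ is automatically finitely generated. This is proved by the standard pullback/Schanuel argument: choose a finite free presentation $R^n \twoheadrightarrow M''$ with finitely generated kernel $K$, form the pullback $P$ of $R^n \twoheadrightarrow M'' \twoheadleftarrow M$, and observe that $P$ fits into a split short exact sequence $0 \to M' \to P \to R^n \to 0$ (so $P \cong M' \oplus R^n$) and also into a short exact sequence $0 \to K \to P \to M \to 0$ (so $P$ is finitely generated); hence $M'$, a direct summand of $P$, is finitely generated. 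Granting this lemma, asking whether $M'$ is finitely presented reduces to asking whether every finitely generated submodule of a finitely presented module is finitely presented, which is precisely the standard characterization of a left coherent ring.

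For the remaining converse in part (2), given any finitely generated left ideal $I$, note that $R$ is finitely presented (being free of rank one) and $R/I$ is finitely presented (one generator, finitely many relations); the two-out-of-three property applied to $0 \to I \to R \to R/I \to 0$ then yields $I$ finitely presented, so $R$ is left coherent.
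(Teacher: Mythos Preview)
Your proof is correct and follows essentially the same route as the paper's. Both arguments isolate the one nontrivial closure (the left-hand term $M'$), test thickness against the sequence $0 \to \ideal{a} \to R \to R/\ideal{a} \to 0$ for the ``only if'' directions, and for the coherent case reduce to showing that $M'$ is finitely generated whenever $M$ is finitely generated and $M''$ is finitely presented. The only cosmetic difference is in that last lemma: you phrase it as a pullback/Schanuel argument, while the paper builds an explicit split surjection $P \oplus P''_1 \to P''_0$ from a finite presentation of $M''$ and a finite projective cover of $M$; these are the same idea packaged differently.
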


\begin{proof}
Suppose the category of finitely generated left $R$-modules is thick
and $\ideal{a}$ is a left ideal.  Then the short exact sequence 
\[
0 \xrightarrow{} \ideal{a} \xrightarrow{} R \xrightarrow{} R/\ideal{a}
\xrightarrow{} 0
\]
shows that $\ideal{a}$ is finitely generated, so $R$ is left
Noetherian.  The converse is standard.  

Similarly, suppose the category of finitely presented left $R$-modules
is thick, and $\ideal{a}$ is a finitely generated ideal.  Then $R$ and
$R/\ideal{a}$ are finitely presented, so the same short exact sequence
shows that $\ideal{a}$ is finitely presented.  Thus $R$ is left
coherent.  The converse is also standard, but perhaps less well-known;
see~\cite[Theorem~2.5.1]{glaz}.  The hardest part is to prove that if
we have a short exact sequence
\[
0 \xrightarrow{} M' \xrightarrow{f} M \xrightarrow{g} M'' \xrightarrow{}0
\]
and $M$ and $M''$ are finitely presented, then $M'$ is as well.  It
suffices to prove that $M'$ is finitely
generated~\cite[Corollary~4.52]{lam}, and for this we take a
finite presentation 
\[
P''_{1} \xrightarrow{r''} P''_{0} \xrightarrow{q''} M'' \xrightarrow{}0
\] 
of $M''$ and a finitely generated projective module $P$ equipped with
a surjection $P\xrightarrow{q}M$.  There is then a map $\alpha
\mathcolon P\xrightarrow{}P''_{0}$ such that $q''\alpha =gq$.  Then 
\[
(\alpha ,r'')\mathcolon P\oplus P''_{1} \xrightarrow{} P''_{0}
\]
is a (necessarily split) surjection, and thus $\ker (\alpha ,r'')$ is
a finitely generated projective mapping onto $M$.  
\end{proof}

We interpret Proposition~\ref{prop-thick} as saying that finitely
presented is only the correct notion of finiteness when the
ring $R$ is left coherent.  In general, we need not just the
relations, but all the syzygies of a finite module to be finitely
generated.  That is, we need the following definition.  

\begin{definition}\label{defn-finite}
A (left) module $M$ over a ring $R$ is said to be of \textbf{type
$\mathbf{FP_{\infty}}$} if $M$ has a projective resolution by finitely
generated projectives.
\end{definition}

So if $R$ is left Noetherian, the modules of type $FP_{\infty}$ are
precisely the finitely generated modules, and if $R$ is left coherent,
the modules of type $FP_{\infty}$ are precisely the finitely presented
modules.

Modules of type $FP_{\infty}$ have been studied before, but mostly in
the context of geometric group theory.  The standard reference for
them is~\cite{bieri}.  They have been used in representation theory of
certain infinite groups by Benson in~\cite{benson-infinite} and
Kropholler~\cite{kropholler}, and their homological algebra has been
studied more generally in the Ph. D. thesis of Livia Miller (now Livia
Hummel)~\cite{miller-livia}.  

In particular, Bieri proves the following proposition. 

\begin{proposition}\label{prop-fp-thick}
For any ring $R$, the modules of type $FP_{\infty}$ form a thick
subcategory.  
\end{proposition}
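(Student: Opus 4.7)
The plan is to verify the two defining properties of thickness separately: closure under direct summands, and closure under the two-out-of-three property for short exact sequences. The basic technical tool throughout will be the horseshoe lemma and a generalized Schanuel-type argument, which together allow us to control syzygies inductively. Specifically, I want to exploit the following reformulation: a finitely generated $M$ is of type $FP_{\infty}$ if and only if for every (equivalently, some) partial resolution $P_n \xrightarrow{} \cdots \xrightarrow{} P_0 \xrightarrow{} M \xrightarrow{} 0$ with each $P_i$ finitely generated projective, the kernel of $P_n \xrightarrow{} P_{n-1}$ is again finitely generated. The equivalence of ``every'' and ``some'' is the Schanuel-type step and rests on the fact that if $P \xrightarrow{} M$ and $P' \xrightarrow{} M$ are two surjections from finitely generated projectives, then the two kernels become isomorphic after adding finitely generated projective summands.

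Next I handle a short exact sequence $0 \xrightarrow{} M' \xrightarrow{} M \xrightarrow{} M'' \xrightarrow{} 0$. If $M'$ and $M''$ are $FP_{\infty}$, the horseshoe lemma, applied level by level, produces a resolution of $M$ whose $n$th term is $P_n' \oplus P_n''$, which is finitely generated projective; so $M$ is $FP_{\infty}$. If $M$ and $M''$ are $FP_{\infty}$, I first show that $M'$ is finitely generated by essentially the argument used in the proof of Proposition~\ref{prop-thick} for the coherent case: take a finitely generated projective $P \xrightarrow{q} M$, a partial finite presentation $P''_1 \xrightarrow{} P''_0 \xrightarrow{} M''$, lift to obtain a map $P \xrightarrow{} P''_0$, and observe that the kernel of the induced surjection $P \oplus P''_1 \xrightarrow{} P''_0$ is a finitely generated projective that maps onto $M'$. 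Then I iterate: the first syzygy of $M'$ fits into a short exact sequence together with the first syzygies of $M$ and $M''$, both of which are again $FP_{\infty}$ by Bieri's shifting principle, so the same argument produces a finitely generated projective surjecting onto the first syzygy of $M'$, and so on. The case in which $M$ and $M'$ are $FP_{\infty}$ is completely analogous: $M''$ is finitely generated as a quotient of $M$, and the syzygies of $M''$ are controlled by the horseshoe lemma in the opposite direction.

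For closure under summands, suppose $M = A \oplus B$ is $FP_{\infty}$. Both $A$ and $B$ are finitely generated as quotients of $M$. I build finitely generated projective resolutions of $A$ and $B$ in parallel: choose $P_0 \twoheadrightarrow A$ and $Q_0 \twoheadrightarrow B$ finitely generated projective, so that $P_0 \oplus Q_0 \twoheadrightarrow M$ is also a surjection from a finitely generated projective. Since $M$ is $FP_{\infty}$, the kernel $\ker(P_0 \oplus Q_0 \xrightarrow{} M)$ is finitely generated, and it splits as $\ker(P_0 \xrightarrow{} A) \oplus \ker(Q_0 \xrightarrow{} B)$; each direct summand is therefore finitely generated. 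Iterating this observation at every syzygy level yields the required resolution of $A$ by finitely generated projectives.

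The main obstacle I expect is the middle case of the two-out-of-three property, where $M$ and $M''$ are $FP_{\infty}$ and one must produce finite generation of all syzygies of $M'$. The subtlety is that ``finitely generated'' is not quite inherited from an ambient short exact sequence in the naive way, so one has to be careful to run the horseshoe construction coherently across all degrees and to invoke the Schanuel-type lemma to compare different choices of partial resolution. Once that inductive bookkeeping is set up, everything else follows by a routine application of the long exact sequence of syzygies.
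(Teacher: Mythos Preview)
Your argument is correct and follows the standard route---the Schanuel reformulation of $FP_{\infty}$ together with the horseshoe lemma is exactly how one proves this; see for instance Bieri's notes. The paper itself does not give a proof: it simply attributes the result to Bieri and moves on, so there is no ``paper's own proof'' to compare against beyond that citation. Your write-up supplies precisely the details the paper omits, and the order you chose (establish the Schanuel characterization first, then summands, then the three short-exact-sequence cases) avoids the circularity one might otherwise worry about when invoking ``syzygies of $FP_{\infty}$ modules are $FP_{\infty}$'' inside the two-out-of-three argument.
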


So the modules of type $FP_{\infty}$ are a reasonable candidate for
the ``finite'' modules over any ring $R$.  However, there is no
guarantee that there are very many modules of type $FP_{\infty }$.  

\begin{proposition}\label{prop-fp-generate}
Suppose $R$ is a ring.  Every left $R$-module is a direct limit of
modules of type $FP_{\infty}$ if and only if $R$ is left coherent.  
\end{proposition}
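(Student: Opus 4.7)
The plan is to prove both directions using the classical compactness characterization: a module $M$ is finitely presented if and only if $\Hom(M,-)$ preserves filtered colimits.

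For the ``if'' direction, assume $R$ is left coherent. By Proposition~\ref{prop-thick}(2), finitely presented modules form a thick subcategory; in particular, the kernel of any surjection between finitely presented modules is again finitely presented. Inductively, every finitely presented module then admits a resolution by finitely generated projectives, so over a coherent ring the classes of finitely presented modules and of modules of type $FP_{\infty}$ coincide. Combining this with the standard fact that every module over an arbitrary ring is a filtered colimit of its finitely presented subquotients (obtained, for instance, by indexing over the filtered category of finitely presented modules mapping to $M$) gives the result.

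For the ``only if'' direction, assume every $R$-module is a direct limit of modules of type $FP_{\infty}$. I would first show that every finitely presented module is already of type $FP_{\infty}$. Given a finitely presented $M$, write $M=\colim_{i}M_{i}$ with each $M_{i}$ of type $FP_{\infty}$. Compactness of $M$ yields
\[
\id_{M}\in \Hom(M,\colim_{i}M_{i})=\colim_{i}\Hom(M,M_{i}),
\]
so there is some index $j$ and a map $\phi\mathcolon M\to M_{j}$ whose composite with the structure map $M_{j}\to M$ is the identity. Hence $M$ is a direct summand of $M_{j}$, and by Proposition~\ref{prop-fp-thick} the class $FP_{\infty}$ is closed under summands, so $M$ is of type $FP_{\infty}$. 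To deduce left coherence, let $\ideal{a}$ be a finitely generated left ideal. Then $R/\ideal{a}$ is finitely presented, hence of type $FP_{\infty}$, so it admits a resolution
\[
\cdots \xrightarrow{} P_{2} \xrightarrow{} P_{1} \xrightarrow{} R \xrightarrow{} R/\ideal{a} \xrightarrow{} 0
\]
by finitely generated projectives. The map $P_{1}\to R$ factors as a surjection $P_{1}\to \ideal{a}$ whose kernel equals the image of $P_{2}\to P_{1}$, which is finitely generated. Thus $\ideal{a}$ is finitely presented, so $R$ is left coherent.

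The main obstacle is the forward direction, and specifically the move from a filtered colimit presentation of a finitely presented module to a direct-summand presentation; once compactness is invoked to split off $M$ inside some $M_{j}$, thickness of $FP_{\infty}$ does the rest, and the reduction to coherence via the ideal $\ideal{a}$ is then routine.
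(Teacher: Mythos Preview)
Your proof is correct and essentially the same as the paper's: the compactness argument splitting a finitely presented $M$ off some $M_{j}$ is identical, as is the easy direction. The only difference is cosmetic---the paper concludes coherence directly from Proposition~\ref{prop-thick}(2) once finitely presented modules and modules of type $FP_{\infty}$ coincide, whereas your explicit ideal argument tacitly assumes the $FP_{\infty}$-resolution of $R/\ideal{a}$ can be taken to start with $R$, which is true but needs a word (e.g., apply thickness of $FP_{\infty}$ to $0\to\ideal{a}\to R\to R/\ideal{a}\to 0$ to get $\ideal{a}\in FP_{\infty}$ directly).
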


\begin{proof}
Suppose that every module $M$ can be written as a direct limit
$\varinjlim M_{i}$ of modules of type
$FP_{\infty}$.  Then if $M$ is finitely presented we have 
\[
\Hom_{R} (M,M) \cong  \Hom_{R} (M, \varinjlim M_{i}) \cong \varinjlim
\Hom_{R} (M,M_{i}).  
\]
In particular, this means that the identity map of $M$ factors through
some $M_{i}$, so that $M$ is a summand of $M_{i}$ and is therefore
itself of type $FP_{\infty}$.  Thus the finitely presented modules
coincide with the modules of type $FP_{\infty}$ and so form a thick
subcategory, and so $R$ is left coherent.  The converse is immediate,
since every module is always a direct limit of finitely presented
modules.  
\end{proof}

To illustrate this proposition, we consider the following example. 

\begin{proposition}\label{prop-noncoherent-example}
Let $k$ be a field and let $R= k[x_{1},x_{2},\dotsc ]/\ideal{m}^{2}$
denote the quotient of the polynomial ring over $k$ on the generators
$x_{1},x_{2},\dotsc$ by the square of the maximal ideal $\ideal{m}=
(x_{1},x_{2},\dotsc)$.  Then the only $R$-modules of type
$FP_{\infty}$ are the finitely generated free modules.  
\end{proposition}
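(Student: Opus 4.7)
The plan is to exploit the fact that $R$ is a commutative local ring with $\ideal{m}^{2}=0$, together with the countably infinite $k$-dimension of $\ideal{m}$, to force a dimension-counting contradiction on any non-free $FP_{\infty}$ module. First I will observe that $R$ is local with unique maximal ideal $\ideal{m}$ (an element $a=c+\epsilon$ with $c\in k^{\times}$ and $\epsilon\in\ideal{m}$ is a unit with inverse $c^{-1}(1-c^{-1}\epsilon)$, since $\epsilon^{2}=0$) and that as a $k$-vector space $\ideal{m}$ has basis $\{x_{1},x_{2},\dotsc\}$, so $\dim_{k}\ideal{m}=\aleph_{0}$. The easy direction, that a finitely generated free $R$-module is of type $FP_{\infty}$, is immediate from Definition~\ref{defn-finite}.

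For the converse, let $M$ be of type $FP_{\infty}$. Since $M$ is in particular finitely generated, I will choose a minimal surjection $q\colon R^{n}\to M$ with $n=\dim_{k} M/\ideal{m}M$, and let $K=\ker q$. By Nakayama's lemma applied to the images in $M/\ideal{m}M$ of the standard basis of $R^{n}$, we have $K\subseteq\ideal{m}R^{n}$, whence $\ideal{m}K\subseteq\ideal{m}^{2}R^{n}=0$. Thus the $R$-action on $K$ factors through $k=R/\ideal{m}$, so $K$ is naturally a $k$-vector space. Since the class of modules of type $FP_{\infty}$ is thick by Proposition~\ref{prop-fp-thick}, the short exact sequence $0\to K\to R^{n}\to M\to 0$ shows that $K$ is itself of type $FP_{\infty}$, hence finitely generated as an $R$-module, and therefore finite-dimensional over $k$.

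Now assume toward contradiction that $M$ is not free, so that $K\neq 0$, and set $d=\dim_{k}K\geq 1$. Repeating the construction, I choose a minimal surjection $R^{d}\twoheadrightarrow K$ (using that $K/\ideal{m}K=K$ has $k$-dimension $d$) with kernel $K'$; by the same reasoning $K'$ is of type $FP_{\infty}$, annihilated by $\ideal{m}$, and therefore of finite $k$-dimension. But $\dim_{k}R^{d}=d\cdot\aleph_{0}=\aleph_{0}$, while the short exact sequence of $k$-vector spaces $0\to K'\to R^{d}\to K\to 0$ forces $\dim_{k}K'=\aleph_{0}-d=\aleph_{0}$, contradicting finite-dimensionality. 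Hence $K=0$ and $M\cong R^{n}$.

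The main obstacle will be conceptual rather than computational: recognizing that the ``large'' size of $R$ as a $k$-vector space (every $R^{d}$ with $d\geq 1$ has $k$-dimension $\aleph_{0}$), combined with $\ideal{m}^{2}=0$ forcing every first syzygy to be annihilated by $\ideal{m}$, is fundamentally incompatible with the $FP_{\infty}$ requirement that every syzygy remain finitely generated. Once the minimal surjection is set up and thickness of $FP_{\infty}$ is invoked, the contradiction is a single line of cardinal arithmetic.
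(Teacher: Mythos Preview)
Your proof is correct and reaches the same conclusion as the paper's, but via a slightly different route. The paper works directly with a minimal two-step presentation $R^{k}\xrightarrow{g}R^{n}\to M\to 0$ (with $n$ minimal, then $k$ minimal for that $n$) and computes the second syzygy explicitly: minimality of $n$ gives $\im g\subseteq\ideal{m}R^{n}$, minimality of $k$ gives $\ker g\subseteq\ideal{m}R^{k}$, and then $\ideal{m}^{2}=0$ forces the reverse inclusion, so $\ker g=\ideal{m}R^{k}$, which is visibly not finitely generated. This shows that a non-free finitely presented $M$ is not even of type $FP_{2}$, and uses nothing beyond elementary manipulations over the local ring.

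You instead invoke Proposition~\ref{prop-fp-thick} to transport the $FP_{\infty}$ hypothesis to the successive syzygies $K$ and $K'$, and replace the explicit identification of $\ker g$ by a $k$-dimension count: $K$ is a finite-dimensional $k$-space, $R^{d}$ has $k$-dimension $\aleph_{0}$, so $K'$ must be infinite-dimensional, contradicting its finite generation. This is clean and conceptually transparent, but it purchases the conclusion at the cost of citing Bieri's thickness result, whereas the paper's argument is entirely self-contained. Note, incidentally, that your argument really only uses that $K$ and $K'$ are finitely generated, which already follows from $M$ being of type $FP_{2}$; so both proofs in fact establish the same sharper statement.
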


\begin{proof}
Suppose $M$ is finitely presented but not free, and write $M$ as the
quotient of two free modules
\[
R^{k} \xrightarrow{g} R^{n} \xrightarrow{} M \xrightarrow{} 0
\]
where $n$ is minimal and $k$ is minimal for that value of $n$ (and
positive since $M$ is not free).  Let us denote the standard
generators of $R^{k}$ by $e_{1},\dotsc ,e_{k}$ and the standard
generators of $R^{n}$ by $e_{1}',e_{2}',\dotsc ,e_{n}'$.  Because $n$
is minimal, we must have $\im g\subseteq \ideal{m}R^{n}$ for all $i$.
Indeed, if not, then if we write
\[
g (e_{i}) = \sum_{j} a_{j}e_{j}'
\]
then there must be an $i$ and a $j$ with $a_{j}$ not in $\ideal{m}$, so that
$a_{j}$ is a unit.  We can then solve for $e_{j}$.  The effect of this
is to write $M$ as the quotient 
\[
R^{k-1} \xrightarrow{h} R^{n-1} \xrightarrow{}M \xrightarrow{} 0
\]
where $h$ is obtained by eliminating $e_{i}$ from $R^{k}$ and $e_{j}'$
from $R^{n}$ and rewriting $g$.  

Now because $k$ is also minimal, we claim that $\ker g\subseteq
\ideal{m}R^{k}$.  Indeed, otherwise there is some $\sum a_{i}e_{i}$ in
the kernel of $g$ and some $j$ with $a_{j}$ not in $\ideal{m}$, so a
unit. We can then write this as 
\[
\sum a_{i}g (e_{i})=0
\]
and use this to solve for $g (e_{j})$ in terms of the other $gf
(e_{i})$.  This means that we can delete $e_{j}$ and rewrite $M$ as
the quotient 
\[
R^{k-1} \xrightarrow{h} R^{n} \xrightarrow{} M \xrightarrow{} 0
\]
where $h$ is the restriction of $g$, violating the minimality of
$k$.  

Now, since $\ideal{m}^{2}=0$ and $\im g\subseteq \ideal{m}R^{n}$, we
also have $\ideal{m}R^{k}\subseteq \ker g$.  Therefore, 
\[
\ker g = \ideal{m}R^{k}.  
\]
But $\ideal{m}R^{k}$ is not finitely generated, from which we conclude
that $M$ is not even of type $FP_{2}$, let alone of type
$FP_{\infty}$.  
\end{proof}

Now that we have a general notion of finiteness, there are
corresponding notions of flatness and injectivity.  More precisely, we
have the following definitions. 

\begin{definition}\label{defn-level}
Let $R$ be a ring.  A left $R$-module $N$ is called
\textbf{$FP_{\infty}$-injective} or \textbf{absolutely clean}
if $\Ext_{R}^{1} (M,N)=0$ for all modules $M$ of type $FP_{\infty}$.
Similarly, $N$ is called \textbf{level} if $\Tor^{1}_{R} (M,N)=0$ for
all right $R$-modules $M$ of type $FP_{\infty}$.  
\end{definition}

Absolutely clean modules are analogous to absolutely pure modules, and
coincide with them when $R$ is left coherent.  More precisely, a short
exact sequence $E$ is pure if and only if $K\otimes_{R}E$ is exact for
any right $R$-module $K$, and this is equivalent to $\Hom_{R} (M,E)$
being exact for any finitely presented left $R$-module $M$.  It
follows that a module $N$ is FP-injective ($\Ext^{1}_{R} (M,N)=0$ for
all finitely presented $M$) if and only if $N$ is absolutely pure
(every short exact sequence beginning with $N$ is pure).  In analogy
with this, we define a short exact sequence $E$ to be \textbf{clean}
if $\Hom_{R} (M,E)$ is exact for all $M$ of type $FP_{\infty}$.  Then
$N$ is absolutely clean if and only if every short exact sequence
beginning with $N$ is clean.  

Of course, for the ring of Proposition~\ref{prop-noncoherent-example},
the only modules of type $FP_{\infty}$ are free, so every module is
both absolutely clean and level. 

Recall that injective $R$-modules are closed under direct limits if
and only if $R$ is left Noetherian, and absolutely pure $R$-modules
are closed under direct limits if and only if $R$ is left
coherent~\cite{stenstrom-fp}.  Absolutely clean modules are always
closed under direct limits, and share many other properties of
absolutely pure modules.

\begin{proposition}\label{prop-finite-injective}
For any ring $R$, the following hold\uc
\begin{enumerate}
\item If $N$ is an absolutely clean $R$-module, then $\Ext^{n}
(M,N)=0$ if $n>0$ and $M$ is of type $FP_{\infty}$.
\item The class of absolutely clean modules is closed under pure
submodules and pure quotients.
\item The class of absolutely clean modules is coresolving;
that is, it contains the injective modules and is closed under
extensions and cokernels of monomorphisms.  
\item The class of absolutely clean modules is closed under products
and direct limits, and so also under transfinite
extensions. 
\item There is a set $S$ of absolutely clean modules such that every
absolutely clean module is a transfinite extension of modules in $S$.  
\end{enumerate}
\end{proposition}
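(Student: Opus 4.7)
The plan is to prove the five parts in the order $(1), (3), (2), (4), (5)$, with each later item leaning on its predecessors. Part (1) is a dimension shift. By Proposition~\ref{prop-fp-thick} the class of modules of type $FP_{\infty}$ is thick, so if we fix a resolution of $M$ by finitely generated projectives then every syzygy $\Omega^{k}M$ is again of type $FP_{\infty}$; hence $\Ext^{n}(M,N) \cong \Ext^{1}(\Omega^{n-1}M, N) = 0$. Part (3) is then an immediate long-exact-sequence argument: injective modules are trivially absolutely clean, and given $0 \to A \to B \to C \to 0$ tested against $M$ of type $FP_{\infty}$, extension closure is read off directly, while cokernel closure uses part (1) to kill the appearing $\Ext^{2}(M,A)$ term. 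Part (2) also follows quickly: if the sequence $0 \to A \to B \to C \to 0$ is pure and $B$ is absolutely clean, then for $M$ of type $FP_{\infty}$ (hence finitely presented) purity makes $\Hom(M,B) \to \Hom(M,C)$ surjective, so $\Ext^{1}(M,A) \hookrightarrow \Ext^{1}(M,B) = 0$ and $A$ is absolutely clean; the cokernel case of part (3) then yields that $C$ is absolutely clean.

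For part (4), closure under products is immediate because $\Ext^{1}(M,-)$ turns products into products. For direct limits, fix a resolution $P_{\bullet} \to M$ by finitely generated projectives; each $P_{n}$ is finitely presented, so $\Hom(P_{n},-)$ commutes with filtered colimits, and so does taking cohomology. Thus $\Ext^{1}(M, \varinjlim N_{\alpha}) \cong \varinjlim \Ext^{1}(M, N_{\alpha}) = 0$. Closure under transfinite extensions then follows by transfinite induction, the successor step using extension closure from part (3) and the limit step using direct-limit closure.

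Part (5) is the main obstacle. Fix a set $\{M_{i}\}_{i \in I}$ of representatives of the isomorphism classes of modules of type $FP_{\infty}$ together with a finitely generated projective presentation $0 \to K_{i} \to P_{i} \to M_{i} \to 0$ for each, and choose an infinite cardinal $\kappa$ that dominates $|R|$, $|I|$, and all the $|M_{i}|$ and $|K_{i}|$. Let $S$ be a set of representatives of absolutely clean modules of cardinality at most $\kappa$. For an absolutely clean $N$ the aim is to build a continuous filtration $0 = N_{0} \subseteq N_{1} \subseteq \cdots \subseteq N_{\lambda} = N$ with each $N_{\alpha}$ absolutely clean and each $N_{\alpha+1}/N_{\alpha} \in S$. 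The inductive step reduces to: given absolutely clean $N_{\alpha} \subsetneq N$ and an element $x \in N/N_{\alpha}$, construct inside the absolutely clean quotient $N/N_{\alpha}$ a submodule $Q$ containing $x$, of cardinality at most $\kappa$, and absolutely clean; then take $N_{\alpha+1}$ to be its preimage, which is absolutely clean by extension closure. The construction of $Q$ is a L\"owenheim--Skolem style countable union $Q = \bigcup_{n<\omega} Q_{n}$, where $Q_{0} = Rx$ and each $Q_{n+1}$ is obtained from $Q_{n}$ by adjoining, for every $i \in I$ and every element of $\Ext^{1}(M_{i}, Q_{n})$, the image of a lift $P_{i} \to N/N_{\alpha}$ of the corresponding cocycle $K_{i} \to Q_{n}$; such a lift exists because $\Ext^{1}(M_{i}, N/N_{\alpha}) = 0$. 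The main technical point is cardinal bookkeeping: since each $K_{i}$ is finitely generated and $|Q_{n}| \leq \kappa$, we have $|\Hom(K_{i}, Q_{n})| \leq \kappa$, so the total number of cocycles to trivialize is at most $\kappa$ and thus $|Q_{n+1}| \leq \kappa$. Part (4) finally yields $\Ext^{1}(M_{i}, Q) \cong \varinjlim \Ext^{1}(M_{i}, Q_{n}) = 0$, since each element is killed at the next stage, so $Q \in S$.
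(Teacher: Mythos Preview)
Your proof is correct. Parts (1)--(4) are essentially the paper's argument, with a harmless reordering (you do (3) before (2) and deduce closure under pure quotients from cokernel closure, whereas the paper handles the pure quotient directly via $\Ext^{2}(M,N')=0$). One small imprecision: in the last line of your part (5) you cite ``Part (4)'' for the isomorphism $\Ext^{1}(M_{i},Q)\cong \varinjlim \Ext^{1}(M_{i},Q_{n})$, but the $Q_{n}$ are not absolutely clean; what you actually need (and clearly intend) is the \emph{computation} from the proof of (4), namely that $\Ext^{1}(M,-)$ commutes with filtered colimits whenever $M$ is of type $FP_{\infty}$.

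Your part (5), however, is genuinely different from the paper's. The paper does not build small absolutely clean submodules directly; instead it proves a general deconstructibility principle (Proposition~\ref{prop-transfinite}): any class of modules closed under pure submodules and pure quotients is filtered by its small members. The proof uses the fact (from \cite[Lemma~5.3.12]{enochs-jenda-book}) that every module has a nonzero pure submodule of bounded cardinality, and then filters $N$ by successive pure submodules, invoking part (2) at each step. Your L\"owenheim--Skolem construction avoids purity and the external citation entirely, at the cost of being specific to the $FP_{\infty}$ situation: you exploit that there is only a \emph{set} of isomorphism classes of $FP_{\infty}$ modules and that their Ext groups against small modules stay small. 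The paper's route is more modular (the same Proposition~\ref{prop-transfinite} is reused for level modules in Proposition~\ref{prop-level}); yours is more self-contained.
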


For the last two statements, given a collection of modules $\cat{D}$, we
say that $N$ is a \textbf{transfinite extension} of objects in
$\cat{D}$ if there is an ordinal $\lambda$ and a colimit-preserving
functor $X\mathcolon \lambda \xrightarrow{}\rmod$ with $X_{0}\in
\cat{D}$ such that each map $X_{i}\xrightarrow{}X_{i+1}$ is a monomorphism
whose cokernel is in $\cat{D}$, and such that
$\colim_{i<\lambda}X_{i}\cong N$.  

\begin{proof}
Given a module $M$ of type $FP_{\infty }$, let $P_{*}$ be a resolution
of $M$ by finitely generated projectives.  Define $M_{0}=M$ and
$M_{i}=B_{i-1}P_{*}$ for $i>0$.  Then if we truncate $P_{*}$ by
setting everything below $i$ to $0$, we get a projective resolution of
$M_{i}$ by finitely generated projectives, so $M_{i}$ is also of type
$FP_{\infty}$. Since we have short exact sequences
\[
0 \xrightarrow{} M_{i} \xrightarrow{} P_{i-1} \xrightarrow{} M_{i-1}
\xrightarrow{} 0,
\]
we see that
\[
\Ext^{n} (M,N) = \Ext^{1} (M_{n-1},N),
\]
from which the first statement follows.

For the second statement, if the short exact sequence
\[
E: 0 \xrightarrow{} N' \xrightarrow{} N \xrightarrow{} N'' \xrightarrow{}0
\]
is pure and $N$ is absolutely clean, then $\Hom (M,E)$ is
still exact for any module $M$ of type $FP_{\infty }$.  Hence
$\Ext^{1} (M,N')$ is a submodule of the zero module $\Ext^{1} (M,N)$,
and so $N'$ is absolutely clean.  Then $\Ext^{1} (M,N'')$ is a
submodule of $\Ext^{2} (M,N')$, which is also zero by the first part
of the proposition.  Hence $N''$ is also absolutely clean.  

Now suppose
\[
0 \xrightarrow{} N' \xrightarrow{} N \xrightarrow{} N'' \xrightarrow{} 0
\]
is a short exact sequence where $N',N$ are absolutely clean.  By
applying $\Hom (M,-)$ to this short exact sequence, where $M$ is
$FP_{\infty}$, we see that $\Ext^{1} (M,N'')$ is trapped between the
two zero groups $\Ext^{1} (M,N)$ and $\Ext^{2} (M,N')$.  Hence it is
zero, and so $N''$ is absolutely clean.  A similar argument shows that
absolutely clean modules are closed under extensions, giving us the
third statement.

It is clear that absolutely clean modules are closed under products.
Now suppose $N_{\alpha}$ is a directed family of absolutely clean
modules.  Then
\begin{gather*}
\Ext^{1} (M,\varinjlim N_{\alpha}) \cong H^{1} (\Hom (P_{*},
\varinjlim N_{\alpha})) \cong H^{1} (\varinjlim \Hom
(P_{*},N_{\alpha})) \\
\cong \varinjlim H^{1}\Hom (P_{*},N_{\alpha})
\cong \varinjlim \Ext^{1} (M, N_{\alpha }),
\end{gather*}
giving us the fourth statement.  The last statement is an immediate
consequence of Proposition~\ref{prop-transfinite} below.  
\end{proof}

We owe the reader the following useful proposition.  

\begin{proposition}\label{prop-transfinite}
Suppose $\cat{A}$ is a class of $R$-modules that is closed under
taking pure submodules and quotients by pure submodules.  Then there
is a cardinal $\kappa$ such that every module in $\cat{A}$ is a
transfinite extension of modules in $\cat{A}$ with cardinality less
than $\kappa$.
\end{proposition}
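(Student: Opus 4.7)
The plan is to produce, from the given closure hypotheses, a continuous chain of pure submodules of any $M\in\cat{A}$ whose successive quotients have bounded cardinality.

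First I would invoke (and briefly justify) the standard purity filtration lemma: there is a cardinal $\kappa$ depending only on $R$ (for instance $\kappa=(|R|+\aleph_{0})^{+}$) such that for any $R$-module $N$ and any subset $S\subseteq N$ with $|S|<\kappa$, there is a pure submodule $N'\subseteq N$ containing $S$ with $|N'|<\kappa$. The proof is to close $S$ iteratively under chosen solutions to every finite system of $R$-linear equations with parameters from the current set that is solvable in $N$; there are fewer than $\kappa$ such systems at each step, and $\omega$ iterations produce (by the uncountable cofinality of $\kappa$) a submodule of size $<\kappa$ that is pure in $N$ because every finite system solvable in $N$ with parameters from $N'$ is by construction solvable in $N'$.

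Now fix $M\in\cat{A}$ and well-order its elements as $(m_{\gamma})_{\gamma<|M|}$. I would build a continuous chain $(M_{\alpha})_{\alpha\le\lambda}$ of pure submodules of $M$ by transfinite recursion: $M_{0}=0$; at limit stages $M_{\alpha}=\bigcup_{\beta<\alpha}M_{\beta}$, which is pure in $M$ because tensor products commute with filtered colimits and filtered colimits preserve injectivity of maps; at a successor stage with $M_{\alpha}\subsetneq M$, let $m$ be $m_{\gamma}$ for the least $\gamma$ with $m_{\gamma}\notin M_{\alpha}$, apply the filtration lemma inside $M/M_{\alpha}$ to the singleton $\{m+M_{\alpha}\}$ to obtain a pure submodule $P/M_{\alpha}\subseteq M/M_{\alpha}$ of cardinality $<\kappa$, and set $M_{\alpha+1}=P$. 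Since every successor step absorbs a new $m_{\gamma}$, the process terminates at some $\lambda\le |M|^{+}$ with $M_{\lambda}=M$.

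The essential verification is that $M_{\alpha+1}$ is pure in $M$. This follows from a short snake-lemma argument: tensoring the two rows $0\to M_{\alpha}\to M_{\alpha+1}\to M_{\alpha+1}/M_{\alpha}\to 0$ and $0\to M_{\alpha}\to M\to M/M_{\alpha}\to 0$ with any right $R$-module $F$, the purity of $M_{\alpha}$ in $M$ keeps the bottom row short exact and the purity of $M_{\alpha+1}/M_{\alpha}$ in $M/M_{\alpha}$ makes the induced map on quotients injective, forcing $M_{\alpha+1}\otimes F\to M\otimes F$ to be injective. With this in hand, each $M_{\alpha}$ lies in $\cat{A}$ (pure submodule of $M$), each $M/M_{\alpha}$ lies in $\cat{A}$ (pure quotient of $M$), and hence each $M_{\alpha+1}/M_{\alpha}$ lies in $\cat{A}$ (pure submodule of $M/M_{\alpha}$) with cardinality $<\kappa$, realising $M$ as a transfinite extension of modules in $\cat{A}$ of cardinality $<\kappa$.

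The main obstacle is the purity filtration lemma, which requires careful set-theoretic bookkeeping to control the cardinality after $\omega$ closure rounds and to verify that the resulting submodule is pure. The snake-lemma step tying $M_{\alpha+1}$ back into $M$ as a pure submodule is routine but indispensable; everything else (the closure properties of $\cat{A}$ and the termination of the chain) is formal once these two inputs are in place.
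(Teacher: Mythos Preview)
Your proposal is correct and follows essentially the same route as the paper: both build a continuous chain of pure submodules of $M$ whose successive quotients are small pure submodules of the current quotient $M/M_{\alpha}$, invoking the existence of small pure submodules and the fact that the preimage of a pure submodule of $M/M_{\alpha}$ is again pure in $M$. The paper is terser---it cites \cite[Lemma~5.3.12]{enochs-jenda-book} for the small-pure-submodule step and simply asserts that the preimage $M_{\alpha+1}$ is pure in $M$---whereas you supply the Skolemization argument for the former and the diagram chase for the latter; your well-ordering of $M$ to force termination is a cosmetic variant of the paper's observation that a strictly increasing chain must exhaust $M$.
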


The proof below does not require $\cat{A}$ to be closed under
transfinite extensions, but in practice one usually wants this as
well, so that a module is in $\cat{A}$ if and only if it is a
transfinite extension of modules in $\cat{A}$ with cardinality less
than $\kappa$.

\begin{proof}
Take $\kappa$ to be any cardinal larger than $|R|$.  Given $M\in
\cat{A}$, we define a strictly increasing chain $M_{i}\subseteq M$
with $M_{i}\in \cat{A}$ by transfinite induction on $i$.  For $i=0$,
we let $M_{0}$ be a nonzero pure submodule of $M$ of cardinality less
than $\kappa$ (using~\cite[Lemma~5.3.12]{enochs-jenda-book}).  Having
defined the pure submodule $M_{i}$ of $M$ and assuming that $M_{i}\neq
M$, we let $N_{i}$ be a nonzero pure submodule of $M/M_{i}$ of
cardinality less than $\kappa$. Since $M_{i}$ is a pure submodule of
$M$, $M/M_{i}$ is also in $\cat{A}$, so $N_{i}$ is as well.  We then
let $M_{i+1}$ be the preimage in $M$ of $N_{i}$, so that $M_{i+1}$ is
a pure submodule of $M$ so also in $\cat{A}$.  For the limit ordinal
step, we define $M_{\beta}= \bigcup_{\alpha <\beta}M_{\alpha}$; this
is a colimit of pure submodules of $M$ so is also a pure submodule of
$M$.  This process will eventually stop when $M_{i}=M$, at which point
we have written $M$ as a transfinite extension of modules in $\cat{A}$
with cardinality less than $\kappa$.
\end{proof}

We then have the following corollary to
Proposition~\ref{prop-finite-injective}.  

\begin{corollary}\label{cor-ac-coherent}
A ring $R$ is left coherent if and only if absolutely clean left
$R$-modules and absolutely pure left $R$-modules coincide.  
\end{corollary}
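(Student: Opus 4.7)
The plan is to prove the two directions separately, leveraging results already established in the section.

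For the forward direction, assume $R$ is left coherent. Then, as noted immediately after Definition~\ref{defn-finite}, the modules of type $FP_{\infty}$ coincide with the finitely presented modules. Consequently the defining conditions of absolutely clean and absolutely pure modules (namely, $\Ext^{1}(M,N) = 0$ for $M$ ranging over type $FP_{\infty}$ modules, respectively finitely presented modules) are literally the same condition, so the two classes of modules coincide.

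For the reverse direction, I would argue by invoking Stenström's characterization, quoted just before Proposition~\ref{prop-finite-injective}, that absolutely pure $R$-modules are closed under direct limits if and only if $R$ is left coherent. The hypothesis says that the absolutely pure modules agree with the absolutely clean modules, and Proposition~\ref{prop-finite-injective}(4) tells us that the absolutely clean modules are always closed under direct limits, for any ring. Combining these two facts, the absolutely pure modules are closed under direct limits, so Stenström's result yields that $R$ is left coherent.

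There is no real obstacle here: both directions reduce immediately to results already cited or proved in Section~\ref{sec-finite}. The only subtlety worth checking is that the trivial inclusion (every finitely presented module is of type $FP_{\infty}$ over a coherent ring, so absolutely pure implies absolutely clean) does \emph{not} require coherence, while its converse does; this is exactly why the closure-under-direct-limits comparison gives the right equivalence.
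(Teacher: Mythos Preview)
Your proof is correct and follows exactly the paper's approach: the paper's own proof only spells out the reverse direction (absolutely pure closed under direct limits via Proposition~\ref{prop-finite-injective}(4), hence $R$ left coherent by Stenstr\"om), treating the forward direction as already observed in the text, which is precisely what you do.

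One small slip in your closing aside: the parenthetical has the implications reversed. The inclusion that holds for \emph{any} ring is that every $FP_{\infty}$ module is finitely presented, so absolutely pure always implies absolutely clean; it is the converse inclusion (finitely presented $\Rightarrow$ $FP_{\infty}$, hence absolutely clean $\Rightarrow$ absolutely pure) that requires coherence. This does not affect your actual argument, which is sound.
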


\begin{proof}
If absolutely clean and absolutely pure modules coincide, then
absolutely pure modules are closed under direct limits, so $R$ is left
coherent.  
\end{proof}

We will see below that level modules are dual to absolutely clean
modules, so should expect dual properties.  

\begin{proposition}\label{prop-level}
For any ring $R$, the following hold\uc 
\begin{enumerate}
\item If $N$ is a level $R$-module, then $\Tor_{n}^{R} (M,N)=0$ if
$n>0$ and $M$ is of type $FP_{\infty}$.  
\item The class of level modules is closed under pure submodules and pure
quotients.  
\item The class of level modules is resolving; that is, it contains
the projective modules and is closed under extensions and cokernels of
epimorphisms.
\item The class of level modules is closed under products and direct
limits, and so also under transfinite extensions.  
\item There is a set $S$ of level modules such that every
level module is a transfinite extension of modules in $S$.  
\end{enumerate}
\end{proposition}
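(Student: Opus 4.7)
The plan is to mirror the proof of Proposition \ref{prop-finite-injective}, swapping $\Ext$ for $\Tor$ and reversing the direction of the long exact sequences. For (1), take a resolution $P_*$ of $M$ by finitely generated projectives and form the syzygies $M_i = \ker(P_{i-1}\to P_{i-2})$; these are again of type $FP_\infty$ since they admit projective resolutions obtained by truncating $P_*$. The short exact sequences $0 \to M_i \to P_{i-1} \to M_{i-1} \to 0$ yield a dimension-shifting isomorphism $\Tor_n(M,N) \cong \Tor_1(M_{n-1},N)$, from which (1) is immediate.

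For (2), suppose $E \colon 0 \to N' \to N \to N'' \to 0$ is pure and $N$ is level. Given $M$ of type $FP_\infty$, the long exact $\Tor$ sequence contains the segment $\Tor_1(M,N) \to \Tor_1(M,N'') \to M\otimes N' \to M\otimes N$; the first group vanishes since $N$ is level, and the last arrow is injective by purity, so $\Tor_1(M,N'') = 0$ and $N''$ is level. Applying (1) to $N''$ gives $\Tor_2(M,N'') = 0$, and combined with $\Tor_2(M,N)=0$ the long exact sequence then forces $\Tor_1(M,N') = 0$, showing that $N'$ is level. Part (3) is equally direct: projectives are level because $\Tor_1$ vanishes on them; for extensions and kernels of epimorphisms, the relevant $\Tor_1$ is sandwiched in the long exact sequence between $\Tor_1$ or $\Tor_2$ groups that vanish by (1).

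For (4), direct limits are handled immediately by the fact that $\Tor$ commutes with them. Products require more care: with $P_*$ a resolution of $M$ by finitely generated projectives, each $P_i$ is a retract of some $R^{k_i}$, so $P_i \otimes_R (-)$ commutes with arbitrary products. Hence $P_* \otimes_R \prod_\alpha N_\alpha \cong \prod_\alpha (P_* \otimes_R N_\alpha)$ as chain complexes, and because products are exact in $\rmod$, homology commutes with products, giving $\Tor_n(M,\prod_\alpha N_\alpha) \cong \prod_\alpha \Tor_n(M,N_\alpha)$. Closure under transfinite extensions is then formal, reducing to closure under extensions and direct limits. Finally, (5) follows from Proposition \ref{prop-transfinite} together with (2).

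The main obstacle is closure under products: in general, $\Tor$ does not commute with arbitrary products, and the finite-generation hypothesis on the terms of $P_*$ must be used in an essential way. The other parts dualize the absolutely clean case with only the minor wrinkle, in (2), of having to determine the right order in which to argue that $N''$ and then $N'$ are level.
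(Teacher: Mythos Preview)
Your proof is correct and follows essentially the same approach as the paper's: dimension-shifting via syzygies for (1), the long exact $\Tor$ sequence together with purity for (2) and (3), commutation of $\Tor$ with direct limits and (using finite presentation of each $P_i$) with products for (4), and Proposition~\ref{prop-transfinite} for (5). The paper cites~\cite[Proposition~4.44]{lam} for the product step, which is exactly the fact you spell out about finitely generated projectives commuting with products under tensor.
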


\begin{proof}
For part~(a), suppose $M$ is of type $FP_{\infty}$ and take a
projective resolution $P_{*}$ of $M$ by finitely generated
projectives.  As we have seen before, this gives us short exact
sequences
\[
0 \xrightarrow{} M_{i+1} \xrightarrow{} P_{i} \xrightarrow{} M_{i}
\xrightarrow{} 0
\]
for all $i$, with $M_{0}=M$ and $M_{i+1}=Z_{i}P$.  Each $M_{i}$ is
then of type $FP_{\infty}$, because we can truncate $P_{*}$ to get a
resolution of $M_{i}$.  Then
\[
\Tor_{n}^{R} (M,F) = \Tor_{1}^{R} (M_{n-1},F)=0
\]
for $n>1$.

For the second statement, if the short exact sequence
\[
E: 0 \xrightarrow{} N' \xrightarrow{} N \xrightarrow{} N'' \xrightarrow{}0
\]
is pure and $N$ is level, then $M\otimes_{R}E$ is
exact for any right $R$-module $M$ of type $FP_{\infty }$.  Hence
$\Tor_{1}(M,N'')$ is a quotient of the zero module $\Tor_{1}(M,N)$,
and so $N''$ is level.  Then $\Tor_{1}(M,N')$ is a
quotient of $\Tor_{2}(M,N')$, which is also zero by the first part
of the proposition.  Hence $N'$ is also level.  

Now suppose
\[
0 \xrightarrow{} N' \xrightarrow{} N \xrightarrow{} N'' \xrightarrow{} 0
\]
is a short exact sequence where $N,N''$ are level.  By applying
$M\otimes_{R}-$ to this short exact sequence, where $M$ is
$FP_{\infty}$, we see that $\Tor_{1}(M,N')$ is trapped between the two
zero groups $\Tor_{1}(M,N)$ and $\Tor_{2} (M,N'')$.  Hence it is zero,
and so $N'$ is level.  A similar argument shows that level modules are
closed under extensions, giving us the third statement.

Because $\Tor$ commutes with colimits, it is clear that level modules
are closed under direct limits.  Now let $N_{i}$ be level for all $i$,
and let $M$ be a right module $M$ of type $FP_{\infty}$.  Take a
projective resolution $P_{*}$ of $M$ by finitely generated
projectives.  Then
\begin{gather*}
\Tor_{1}^{R} (M,\prod N_{i}) = H_{1} (P_{*}\otimes \prod N_{i}) \\
\cong H_{1} (\prod (P_{*}\otimes N_{i})) \cong \prod H_{1}
(P_{*}\otimes N_{i}) = 0,
\end{gather*}
where we have used the fact that each $P_{n}$ is finitely presented to
move it inside the product~\cite[Proposition~4.44]{lam}.  Finally, the
last statement is immediate from Proposition~\ref{prop-transfinite}.   
\end{proof}

\begin{corollary}\label{cor-level-coherent}
A ring $R$ is right coherent if and only if level \ulp left\urp $R$-modules
and flat $R$-modules coincide.
\end{corollary}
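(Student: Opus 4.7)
The plan is to handle the two directions separately, with the forward direction being essentially bookkeeping and the converse resting on Chase's classical characterization of coherent rings via products of flats.

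For the forward implication, suppose $R$ is right coherent. By the right-handed analogue of Proposition~\ref{prop-thick}, the finitely presented right $R$-modules form a thick subcategory. Given a finitely presented right $R$-module $M$, pick any presentation $R^{n}\xrightarrow{}M\xrightarrow{}0$; the kernel is finitely generated, and by thickness together with finite generation of submodules of $R^n$ it is in fact finitely presented. Iterating this construction produces a projective resolution of $M$ by finitely generated projectives, so every finitely presented right module is of type $FP_{\infty}$. Since the reverse inclusion always holds, the two notions of finiteness coincide on the right, and hence so do the two $\Tor$-vanishing conditions defining flat and level left modules.

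For the converse, suppose that the classes of flat and level left $R$-modules coincide. By Proposition~\ref{prop-level}(d), the class of level left $R$-modules is closed under arbitrary products; therefore so is the class of flat left $R$-modules. By Chase's theorem (that $R$ is right coherent if and only if every direct product of flat left $R$-modules is flat, equivalently, every product of copies of $R$ is flat as a left $R$-module), this forces $R$ to be right coherent.

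The only subtle point is the forward direction, where one must be careful that the right coherence hypothesis really does promote finitely presented right modules all the way up to $FP_{\infty}$; but the thick subcategory argument from Proposition~\ref{prop-thick} takes care of this uniformly. Once that is in hand, everything else is a direct appeal to Proposition~\ref{prop-level}(d) and Chase's theorem, so there is no substantive obstacle.
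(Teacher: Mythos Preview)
Your argument follows the same route as the paper's: the forward direction uses that finitely presented coincides with type $FP_{\infty}$ over a right coherent ring, and the converse uses closure of level modules under products (Proposition~\ref{prop-level}) together with Chase's theorem. Two small points: in the forward direction you should say explicitly why $\Tor_{1}^{R}(M,N)=0$ for all finitely presented right modules $M$ forces $N$ to be flat (every module is a direct limit of finitely presented ones and $\Tor$ commutes with direct limits)---this is what the paper does, and it is not the \emph{definition} of flatness; and the phrase ``finite generation of submodules of $R^{n}$'' is misleading, since that fails for non-Noetherian coherent rings, and is unnecessary anyway: thickness alone, applied to $0\to K\to R^{n}\to M\to 0$ with $R^{n}$ and $M$ finitely presented, already gives that $K$ is finitely presented.
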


\begin{proof}
If $R$ is right coherent, then every finitely presented right $R$-module has type
$FP_{\infty}$, so a level module $N$ has $\Tor_{1}^{R} (M,N)=0$ for all
finitely presented $M$.  Since every module is a direct limit of
finitely presented modules, and $\Tor_{1}^{R} (-,N)$ preserves direct
limits, $N$ is flat. Conversely, if every level module is flat, then
Proposition~\ref{prop-level} shows that products of flat left
$R$-modules are flat.  This forces $R$ to be right coherent by Chase's
theorem~\cite[Theorem~4.47]{lam}.
\end{proof}

Now recall the well-known fact that a left $R$-module $N$ is flat if
and only if its character module $N^{+}=\Hom_{\Z} (M,\Q )$ is
injective (or, equivalently, absolutely pure) as a right
$R$-module~\cite[Theorem~4.9]{lam}, and the less well-known fact that
if $R$ is left Noetherian then $N$ is injective if and only if $N^{+}$
is flat as a right
$R$-module~\cite[Corollary~3.2.17]{enochs-jenda-book}.  This partial
duality between flat and injective modules is a reflection of a
perfect duality between level and absolutely clean modules.

\begin{theorem}\label{thm-duality}
For any ring $R$, a module $N$ is level if and only if $N^{+}$ is
absolutely clean, and $N$ is absolutely clean if and only if $N^{+}$
is level.  
\end{theorem}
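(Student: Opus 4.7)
The plan is to prove both biconditionals simultaneously by setting up two natural isomorphisms relating $\Ext$, $\Tor$, and the character module functor $(-)^{+} = \Hom_{\Z}(-, \Q)$. The key background observation is that $\Q$ is an injective cogenerator of abelian groups, so $(-)^{+}$ is exact and reflects zero; that is, $A = 0$ if and only if $A^{+} = 0$. Consequently, vanishing of any $\Ext$ or $\Tor$ group is equivalent to vanishing of its character module, and each biconditional of the theorem will become immediate once the right natural isomorphism is in hand.

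For the first equivalence I would begin from the standard Hom-Tensor adjunction
$$\Hom_{R}(M, N^{+}) \cong (M \otimes_{R} N)^{+}$$
for $M$ a right $R$-module and $N$ a left $R$-module. Applying this termwise to a projective resolution of $M$ and using exactness of $(-)^{+}$ produces
$$\Ext^{n}_{R}(M, N^{+}) \cong \Tor_{n}^{R}(M, N)^{+}$$
for all $n \geq 0$. Taking $n = 1$ and letting $M$ range over right $R$-modules of type $FP_{\infty}$, the vanishing criterion of Definition~\ref{defn-level} immediately yields: $N$ is level as a left $R$-module if and only if $N^{+}$ is absolutely clean as a right $R$-module.

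For the second equivalence I need the companion isomorphism
$$\Tor_{n}^{R}(N^{+}, M) \cong \Ext^{n}_{R}(M, N)^{+}$$
for $M$ a left $R$-module of type $FP_{\infty}$. The key technical ingredient is the natural isomorphism $N^{+} \otimes_{R} M \cong \Hom_{R}(M, N)^{+}$ for $M$ finitely presented. I would prove this by a standard five-lemma argument: it is obvious for $M = R$, extends to finitely generated free modules, and then descends to $M$ from a presentation $R^{a} \to R^{b} \to M \to 0$ using right-exactness of $N^{+} \otimes_{R} (-)$ and left-exactness of $\Hom_{R}(-, N)^{+}$. Applying this termwise to a resolution of $M$ by finitely generated projectives (which exists because $M$ is of type $FP_{\infty}$), and again invoking exactness of $(-)^{+}$, produces the displayed isomorphism. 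Setting $n = 1$ then gives: $N$ is absolutely clean if and only if $N^{+}$ is level.

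The main obstacle is the natural isomorphism $N^{+} \otimes_{R} M \cong \Hom_{R}(M, N)^{+}$ for finitely presented $M$; this is precisely the place where the finiteness of $M$ is essential, and it is what forces us to use projective resolutions by finitely generated projectives (that is, modules of type $FP_{\infty}$) rather than arbitrary modules. All the remaining steps, including the extension to $\Ext/\Tor$ and the final reduction to the vanishing criteria in Definition~\ref{defn-level}, are formal manipulations with resolutions and the exactness of $(-)^{+}$.
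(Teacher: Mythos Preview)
Your proposal is correct and follows essentially the same approach as the paper: both rely on the adjunction $\Hom_{R}(M,N^{+})\cong (M\otimes_{R}N)^{+}$ for the first biconditional and on the isomorphism $N^{+}\otimes_{R}M\cong \Hom_{R}(M,N)^{+}$ for finitely presented $M$ (the paper cites this as~\cite[Theorem~3.2.11]{enochs-jenda-book}) for the second. The only difference is presentational: you pass to full resolutions and obtain the derived isomorphisms $\Ext^{n}\cong (\Tor_{n})^{+}$ in all degrees, while the paper works directly with a single short exact sequence $0\to M_{1}\to P\to M\to 0$ to extract the $n=1$ case, using that $M_{1}$ is again finitely presented when $M$ is of type $FP_{\infty}$.
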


\begin{proof}
Suppose $N$ is level and $M$ is a right $R$-module of type
$FP_{\infty}$.  Consider a short exact sequence
\[
E\mathcolon 0 \xrightarrow{} M_{1} \xrightarrow{} P \xrightarrow{} M
\xrightarrow{} 0
\]
where $P$ is projective.  Because $N$ is level, $E\otimes_{R}N$ is
exact.  Hence $(E\otimes_{R}N)^{+}$ is also exact, but by adjointness
this is the same as $\Hom_{R} (E,N^{+})$.  It then follows that
$\Ext^{1}_{R} (M,N^{+})=0$, so $N^{+}$ is absolutely clean.
Conversely, if $N^{+}$ is absolutely clean, then $\Hom_{R} (E,
N^{+})\cong (E\otimes_{R}N)^{+}$ is exact. Since $\Q $ is an
injective cogenerator of the category of abelian groups,
$E\otimes_{R}N$ must be exact, and so $\Tor^{R}_{1} (M,N)=0$ and $N$
is level.

Now suppose that $N$ is absolutely clean and $M$ is a left $R$-module
of type $FP_{\infty}$.  We take a short exact sequence 
\[
E\mathcolon 0 \xrightarrow{} M_{1} \xrightarrow{} P \xrightarrow{} M
\xrightarrow{} 0
\]
where $P$ is finitely generated projective and $M_{1}$ is also of
type $FP_{\infty}$, and in particular finitely presented. Since $N$ is
absolutely clean, $\Hom_{R} (E,N)$ is exact, and so $(\Hom_{R}
(E,N))^{+}$ is also exact.  Since $E$
consists of finitely presented modules and $\Q $ is an injective
abelian group, we can apply Theorem~3.2.11
of~\cite{enochs-jenda-book} to conclude that
\[
(\Hom_{R} (E,N))^{+}\cong N^{+}\otimes_{R} E.
\]
It then follows immediately that $\Tor_{1}^{R} (N^{+},M)=0$, so
$N^{+}$ is level.  As before, the converse is a matter of reversing
the steps.  Indeed, if
$N^{+}$ is level, then $N^{+}\otimes_{R}E$ is exact, so $(\Hom_{R}
(E,N))^{+}$ is exact, so $\Hom_{R} (E,N)$ is exact.  We conclude that
$\Ext^{1}_{R} (M,N)=0$ and so $N$ is absolutely clean.
\end{proof}

Note that this theorem implies that if $R$
is left coherent and $N$ is an injective left $R$-module, then $N^{+}$
is level, so must be flat.  On the other hand, if $R$ is left coherent
but not left Noetherian, then there is an absolutely pure
module $N$ that is not injective, and it too will have $N^{+}$ flat.  

Now we recall the notion of a complete cotorsion pair.  Given an
abelian category $\cat{A}$, a \textbf{cotorsion pair} is a pair of
classes of objects $(\class{F},\class{C})$ of $\cat{A}$ such that
$\rightperp{\class{F}} = \class{C}$ and $\class{F} =
\leftperp{\class{C}}$. Here $\rightperp{\class{F}}$ is the class of
objects $Y \in \cat{A}$ such that $\Ext^1(F,Y) = 0$ for all $F \in
\class{F}$, and similarly $\leftperp{\class{C}}$ is the class of
objects $X \in \cat{A}$ such that $\Ext^1(X,C) = 0$ for all $C \in
\class{C}$. Two simple examples of cotorsion pairs in $\rmod$ are
$(\class{P},\class{A})$ and $(\class{A},\class{I})$, where $\class{P}$
is the class of projectives, $\class{I}$ is the class of injectives
and $\class{A}$ is the class of all $R$-modules. The canonical example
of a cotorsion pair is $(\class{F},\class{C})$ where $\class{F}$ is
the class of flat $R$-modules and $\class{C}$ is the class of
cotorsion $R$-modules~\cite{enochs-jenda-book}.

The cotorsion pair is said to have \textbf{enough projectives} if for
any $A \in \cat{A}$ there is a short exact sequence $0 \xrightarrow{}
C \xrightarrow{} F \xrightarrow{} A \xrightarrow{} 0$ where $C \in
\class{C}$ and $F \in \class{F}$. We say it has \textbf{enough
injectives} if it satisfies the dual statement. These two statements
are in fact equivalent for a cotorsion pair as long as the
\emph{category} $\cat{A}$ has enough projectives and
injectives~\cite[Proposition 7.1.7]{enochs-jenda-book}. We say that
the cotorsion pair is \textbf{complete} if it has enough projectives and
injectives. The book~\cite{enochs-jenda-book} is a standard reference
for cotorsion pairs.

Since level modules are analogous to flat modules, we should expect
them to be the left half of a complete cotorsion pair. 

\begin{definition}\label{defn-cotorsion}
A module $N$ is called \textbf{cospiral} if $\Ext^{1} (F,N)=0$ for all
level modules $F$.
\end{definition}

\begin{theorem}\label{thm-cospiral}
For any ring $R$, the pair (level modules, cospiral modules) forms
a complete cotorsion pair that is cogenerated by a set.  In
particular, the level modules form a covering class.
\end{theorem}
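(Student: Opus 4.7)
The plan is to apply the Eklof--Trlifaj small object argument (sometimes called Quillen's small object argument in the cotorsion pair literature), using Proposition~\ref{prop-level}(e) to obtain the cogenerating set, and then to derive the covering statement from the fact that the left class is closed under direct limits.

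First, I would invoke Proposition~\ref{prop-level}(e) to produce a set $S$ of level modules with the property that every level module is a transfinite extension of modules in $S$. By the Eklof--Trlifaj theorem (see \cite{enochs-jenda-book}), the pair $(\leftperp{(\rightperp{S})},\rightperp{S})$ is then a complete cotorsion pair cogenerated by $S$, and every object of $\leftperp{(\rightperp{S})}$ is a direct summand of a transfinite extension of modules in $S$ (with $0$ included if needed).

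Next I would identify this cotorsion pair with (level modules, cospiral modules). Write $\class{L}$ for the level modules and $\class{C} = \rightperp{\class{L}}$ for the cospiral modules. Since $S \subseteq \class{L}$ and $\class{L}$ is closed under transfinite extensions (Proposition~\ref{prop-level}(d)), Eklof's lemma gives $\leftperp{(\rightperp{S})} \subseteq \class{L}$; conversely, since every level module is already a transfinite extension of modules in $S$, and Eklof's lemma says transfinite extensions of modules in $S$ lie in $\leftperp{(\rightperp{S})}$, we get $\class{L} \subseteq \leftperp{(\rightperp{S})}$. (The direct summand step is automatic because level modules are closed under summands, which follows from the additivity of $\Tor$.) Hence $\class{L} = \leftperp{(\rightperp{S})}$, and then $\class{C} = \rightperp{\class{L}} = \rightperp{S}$ by definition of a cotorsion pair. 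Completeness and set-cogeneration are then inherited from the Eklof--Trlifaj cotorsion pair.

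Finally, for the covering statement, I would appeal to the standard result that a complete cotorsion pair cogenerated by a set whose left class is closed under direct limits has a covering left class (the Enochs argument generalizing Bican--El Bashir--Enochs on flat covers); Proposition~\ref{prop-level}(d) supplies closure of $\class{L}$ under direct limits. No step here is genuinely delicate once Proposition~\ref{prop-level} is in hand; the mildly technical point is just the clean identification of $\leftperp{(\rightperp{S})}$ with $\class{L}$, and in particular checking that summands cause no trouble, which is where closure of $\class{L}$ under summands is used.
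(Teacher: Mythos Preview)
Your proposal is correct and follows essentially the same route as the paper: invoke Proposition~\ref{prop-level}(e) to get the set $S$, apply the Eklof--Trlifaj theorem to obtain the complete cotorsion pair cogenerated by $S$, identify its left class with the level modules via closure under transfinite extensions and summands, and deduce covering from completeness plus closure under direct limits. The paper's proof is terser but the logical structure is identical.
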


\begin{proof}
By Proposition~\ref{prop-level}, there is a set $S$ of level modules
such that the class of level modules is precisely the transfinite
extensions of $S$.  Then $S$ cogenerates a complete cotorsion theory
$(\cat{D},\cat{E})$, where $\cat{D}$ is the class of all summands of
transfinite extensions of elements of $S$; that is, the level modules.
Then $\cat{E}$ is necessarily the class of cospiral modules.
Completeness of the cotorsion theory proves that the class of level
modules is precovering.  Since it is also closed under directed
colimits, it is covering~\cite[Corollary~5.2.7]{enochs-jenda-book}.
\end{proof}

\section{Chain complexes}

We recall some basics about chain complexes and model categories in
this section.  Recall that $R$ is a ring with unity, and $\rmod$ is
the category of left $R$-modules.  We will denote the category of
unbounded chain complexes of (left) $R$-modules by $\ch$.  A chain
complex $\cdots \xrightarrow{} X_{n+1} \xrightarrow{d_{n+1}} X_{n}
\xrightarrow{d_n} X_{n-1} \xrightarrow{} \cdots$ will be denoted by
$(X,d)$ or simply $X$. We say $X$ is \textbf{bounded below (above)} if
$X_{n} = 0$ for $n < k$ ($n > k)$ for some $k \in \Z$.  We say it is
\textbf{bounded} if it is bounded above and below.  The \textbf{nth
cycle module} is defined as $\ker{d_{n}}$ and is denoted $Z_{n}X$. The
\textbf{nth boundary module} is $\im{d_{n+1}}$ and is denoted
$B_{n}X$. The \textbf{nth homology module} is defined to be
$Z_{n}X/B_{n}X$ and is denoted $H_{n}X$. Given an $R$-module $M$, we
let $S^{n}(M)$ denote the chain complex with all entries 0 except $M$
in degree $n$. We let $D^{n}(M)$ denote the chain complex $X$ with
$X_{n} = X_{n-1} = M$ and all other entries 0.  All maps are 0 except
$d_{n} = 1_{M}$. Given $X$, the \textbf{suspension of $X$}, denoted
$\Sigma X$, is the complex given by $(\Sigma X)_{n} = X_{n-1}$ and
$(d_{\Sigma X})_{n} = -d_{n}$.  The complex $\Sigma (\Sigma X)$ is
denoted $\Sigma^{2} X$ and inductively we define $\Sigma^{n} X$ for
all $n \in \Z$.

Given two chain complexes $X$ and $Y$ we define $\homcomplex(X,Y)$ to
be the complex of abelian groups $ \cdots \xrightarrow{} \prod_{k \in
\Z} \Hom(X_{k},Y_{k+n}) \xrightarrow{\delta_{n}} \prod_{k \in \Z}
\Hom(X_{k},Y_{k+n-1}) \xrightarrow{} \cdots$, where $(\delta_{n}f)_{k}
= d_{k+n}f_{k} - (-1)^n f_{k-1}d_{k}$.  This gives a functor
$\homcomplex(X,-) \mathcolon \ch \xrightarrow{} \textnormal{Ch}(\Z)$
which is left exact, and exact if $X_{n}$ is projective for all $n$.
Similarly the contravariant functor $\homcomplex(-,Y)$ sends right
exact sequences to left exact sequences and is exact if $Y_{n}$ is
injective for all $n$.

Recall that $\Ext^1_{\ch}(X,Y)$ is the group of (equivalence classes)
of short exact sequences $0 \ar Y \ar Z \ar X \ar 0$ under the Baer
sum. We let $\Ext^1_{dw}(X,Y)$ be the subgroup of $\Ext^1_{\ch}(X,Y)$
consisting of those short exact sequences which are split in each
degree. The next lemma is very useful. It is standard and we will not
prove it.

\begin{lemma}\label{lemma-homcomplex-basic-lemma}
For chain complexes $X$ and $Y$, we have
$$\Ext^1_{dw}(X,\Sigma^{(-n-1)}Y) \cong H_n \homcomplex(X,Y) =
Ch(R)(X,\Sigma^{-n} Y)/\sim \ ,
$$ where $\sim$ is chain homotopy.
\end{lemma}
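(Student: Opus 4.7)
The plan is to prove the two displayed isomorphisms separately by unpacking the definitions, then produce explicit inverse bijections for the non-trivial one.

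First, I would handle the second equality $H_n\homcomplex(X,Y) \cong \ch(X,\Sigma^{-n}Y)/\sim$ by unpacking the definitions carefully. A degree-$n$ element of $\homcomplex(X,Y)$ is a family $f = (f_k\colon X_k \to Y_{k+n})_k$, and with the paper's conventions $(\Sigma X)_n = X_{n-1}$, $d^{\Sigma X} = -d^X$, one computes $(\Sigma^{-n} Y)_k = Y_{k+n}$ with differential $(-1)^n d^Y_{k+n}$. The cycle condition $(\delta_n f)_k = d_{k+n}^Y f_k - (-1)^n f_{k-1} d_k^X = 0$ is then precisely the chain-map condition $f \circ d^X = d^{\Sigma^{-n}Y} \circ f$. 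Similarly, one checks that the image of $\delta_{n+1}$ is exactly the set of chain null-homotopies (between cycle representatives viewed as chain maps into $\Sigma^{-n}Y$), so the quotient $H_n\homcomplex(X,Y)$ realizes chain maps $X \to \Sigma^{-n}Y$ up to chain homotopy.

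Second, I would build inverse bijections for the first equality $\Ext^1_{dw}(X,\Sigma^{-n-1}Y) \cong H_n \homcomplex(X,Y)$. Given a cycle $f \in \homcomplex(X,Y)_n$, define an extension $E(f)$ with $E(f)_k = Y_{k+n+1} \oplus X_k$ and twisted differential
$$d_{E(f)}(y,x) = \bigl(d^{\Sigma^{-n-1}Y}_k y + \varepsilon\, f_k(x),\; d^X_k x\bigr),$$
where $\varepsilon \in \{\pm 1\}$ is fixed by the requirement $d_{E(f)}^2 = 0$; the calculation shows this identity is equivalent to the cycle condition on $f$. The obvious inclusion and projection produce a short exact sequence $0 \to \Sigma^{-n-1}Y \to E(f) \to X \to 0$ that is visibly split in each degree, so $[E(f)]$ is an element of $\Ext^1_{dw}$. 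In the reverse direction, given a degreewise-split extension with a chosen degreewise splitting $\sigma\colon X_k \to E_k$, the expression $d_E \sigma - \sigma d^X$ takes values in the subcomplex $\Sigma^{-n-1}Y$ and defines (after the appropriate identification with degree $n$) a cycle in $\homcomplex(X,Y)_n$.

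To finish, I would check that these assignments descend to the quotients: replacing $f$ by $f + \delta_{n+1}s$ yields an isomorphic extension via the self-map $(y,x) \mapsto (y + s_k(x), x)$ of $E(f)_k$, and a different choice of degreewise splitting alters the associated cocycle by a coboundary. Compatibility with Baer sum on one side and addition of cocycles on the other is then a routine diagram chase once the bijection is in place. The main obstacle, as is usual for $\homcomplex$-type arguments, is sign bookkeeping: one must synchronize the sign conventions for $\delta_n$, for the iterated suspension $\Sigma^{-n-1}$, and for the twisted differential on $E(f)$, and ensure these signs mesh with the chain-homotopy formula. Once a consistent convention is fixed and carried through all three constructions, everything else is formal.
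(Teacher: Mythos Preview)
The paper does not actually prove this lemma: immediately before the statement it says ``The next lemma is very useful. It is standard and we will not prove it.'' So there is no proof in the paper to compare against. Your argument is exactly the standard one the authors have in mind---unwinding the cycle and boundary conditions in $\homcomplex(X,Y)$ to recover chain maps into $\Sigma^{-n}Y$ modulo homotopy, and then using the usual cocycle/twisted-differential bijection between homotopy classes and degreewise-split extensions---and it is correct. Your caveat about sign bookkeeping is well placed: the only real work is keeping the conventions for $\delta_{n}$, the iterated suspension, and the chain-homotopy formula consistent, and once that is done the rest is formal.
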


In particular, for chain complexes $X$ and $Y$, $\homcomplex(X,Y)$ is
exact iff for any $n \in \Z$, any $f \mathcolon \Sigma^nX \ar Y$ is
homotopic to 0 (or iff any $f \mathcolon X \ar \Sigma^nY$ is homotopic
to 0).

Next recall that a model category is a category $\cat{M}$ with all
small limits and colimits equipped with three classes of maps called
cofibrations, fibrations, and weak equivalences, all subject to a list
of axioms allowing one to formally introduce homotopy theory on
$\cat{M}$. We assume the reader has a basic understanding or interest
in model categories. Standard references
include~\cite{hovey-model}
and~\cite{dwyer-spalinski}. In~\cite{hovey-cotorsion}, the third author found
the following 1-1 correspondence between complete cotorsion pairs
(discussed above just after Theorem~\ref{thm-duality}) and
\textbf{abelian} model category structures on $\cat{A}$. 

\begin{theorem}\label{them-Hoveys correspondence}
An abelian model structure on a bicomplete abelian category $\cat{A}$
is equivalent to a thick subcategory $\class{W}$ and two classes
$\class{Q}$ and $\class{R}$ for which $(\class{Q}, \class{R} \cap
\class{W})$ and $(\class{Q} \cap \class{W}, \class{R})$ are complete
cotorsion pairs. In this case $\class{W}$ is the class of trivial
objects, $\class{Q}$ the cofibrant objects and $\class{R}$ the fibrant
objects. Here an abelian model structure is one which a map is a
(trivial) cofibration if and only if it is a monomorphism with
(trivially) cofibrant cokernel. Equivalently, a map is a (trivial)
fibration if and only if it is a surjection with (trivially) fibrant
kernel.
\end{theorem}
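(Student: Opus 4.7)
The plan is to set up maps in both directions between abelian model structures on $\cat{A}$ and triples $(\class{Q}, \class{W}, \class{R})$ satisfying the hypotheses, and verify that they are mutually inverse. The forward direction, extracting data from a model structure, is relatively quick: let $\class{Q}$, $\class{R}$, $\class{W}$ be the classes of cofibrant, fibrant, and trivial objects respectively. Thickness of $\class{W}$ follows from the 2-of-3 axiom applied to short exact sequences, which in the abelian setting are cofiber sequences once one uses the prescribed description of (trivial) cofibrations as monomorphisms with (trivially) cofibrant cokernel. To see that $(\class{Q}, \class{R} \cap \class{W})$ is a cotorsion pair, observe that $X \in \class{Q}$ iff $0 \to X$ is a cofibration iff it has the LLP with respect to every trivial fibration; since trivial fibrations are exactly surjections with kernel in $\class{R} \cap \class{W}$, this LLP translates into $\Ext^1(X, Y) = 0$ for all $Y \in \class{R} \cap \class{W}$, giving $\class{Q} = \leftperp{(\class{R} \cap \class{W})}$. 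A symmetric argument handles the other cotorsion pair, and the factorization axioms of the model structure yield completeness of both pairs.

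The reverse direction is the substantive part. Given the data $(\class{Q}, \class{W}, \class{R})$, I would define (trivial) cofibrations as monomorphisms with cokernel in $\class{Q}$ (respectively $\class{Q} \cap \class{W}$), (trivial) fibrations dually, and weak equivalences as those maps that factor as a trivial cofibration followed by a trivial fibration. Closure of these classes under composition and retracts follows from standard properties of classes defined by kernel/cokernel membership, together with the fact that $\class{Q}$, $\class{R}$, and $\class{W}$ are closed under retracts (which itself follows from the cotorsion pair / thickness structure). The factorization axioms come directly from completeness of the two cotorsion pairs: given $f \mathcolon X \to Y$, factor it first through a pushout/pullback built from a short exact sequence $0 \to R \to Q \to Y \to 0$ with $Q \in \class{Q} \cap \class{W}$, $R \in \class{R}$ provided by the first cotorsion pair, and analogously for the other factorization using the second pair. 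The lifting axiom reduces to a direct $\Ext^1$-vanishing computation using the defining orthogonality of each cotorsion pair.

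The main obstacle is verifying the 2-of-3 property for weak equivalences and, in tandem, showing that a map is a weak equivalence iff it factors as a trivial cofibration followed by a trivial fibration \emph{in any way}, not just in some specific way. The argument uses thickness of $\class{W}$ crucially: by comparing two such factorizations via a pullback/pushout square built from the cotorsion pair factorizations, one reduces 2-of-3 to the statement that in a short exact sequence, if two of the three entries lie in $\class{W}$ then so does the third, together with the analogous statement for $\class{Q}$ and $\class{R}$ on the $\class{W}$-trivial pieces. The right technical device here is the characterization of weak equivalences as maps $f$ for which, in any factorization $f = pi$ with $i$ a cofibration and $p$ a fibration, the kernel and cokernel of $p$ and $i$ can be arranged to lie in $\class{W}$; thickness then propagates $\class{W}$-membership across the relevant short exact sequences.

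Finally, I would check that the two constructions are mutually inverse, which is essentially formal: starting from data $(\class{Q}, \class{W}, \class{R})$ and forming the model structure, an object $X$ is cofibrant iff $0 \to X$ is a monomorphism with cokernel in $\class{Q}$, iff $X \in \class{Q}$, and similarly for $\class{R}$; while $X$ is trivial iff $0 \to X$ is a weak equivalence, which by the factorization characterization above is equivalent to $X \in \class{W}$. Starting from a model structure and passing through data and back recovers the original (trivial) cofibrations and fibrations by the extraction argument of the first paragraph, and hence the weak equivalences by Ken Brown's lemma.
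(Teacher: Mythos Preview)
The paper does not prove this theorem; it is stated as a result of the third author from~\cite{hovey-cotorsion} and used as a black box throughout. So there is no ``paper's own proof'' to compare against here.

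That said, your outline is a faithful sketch of the argument in~\cite{hovey-cotorsion}. The forward direction is as you describe. For the reverse direction you have correctly identified the main difficulty: verifying 2-of-3 for the weak equivalences, which are \emph{defined} as composites of trivial cofibrations and trivial fibrations, and showing this definition is independent of the chosen factorization. Your description of the factorization axiom is a bit loose --- in practice one factors $f\mathcolon X\to Y$ by first taking an approximation sequence $0\to K\to Q\to Y\to 0$ from one of the cotorsion pairs and then forming the pullback along $f$ (or dually a pushout), checking that the resulting maps land in the correct classes --- but the idea is right. The lifting axiom does reduce to $\Ext^1$-vanishing as you say, though one must be slightly careful: lifting a square with cofibration $i$ against trivial fibration $p$ uses both that $\Ext^1(\cok i,\ker p)=0$ and that $\Hom(\cok i,-)$ preserves the relevant surjection, which follows from the same orthogonality. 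Overall your plan would work if executed carefully, and it matches Hovey's original approach.
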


The following two Propositions are really just Corollaries of
Theorem~\ref{them-Hoveys correspondence}. We will use them to
construct many different model structures on $\ch$.

\begin{proposition}[(Construction of an injective model
structure)]\label{prop-how to create an injective model structure}
Let $\cat{A}$ be a bicomplete abelian category with enough injectives
and denote the class of injectives by $\class{I}$. Let $\class{F}$ be
any class of objects and set $\class{W} =
\leftperp{\class{F}}$. Suppose the following conditions hold:
\begin{enumerate}
\item $(\class{W},\class{F})$ is a complete cotorsion pair.
\item $\class{W}$ is thick.
\item $\class{I} \subseteq \class{W}$.
\end{enumerate}
Then there is an abelian model structure on $\cat{A}$ where every
object is cofibrant, $\class{F}$ is the class of fibrant objects, $\class{W}$
is the class of trivial objects, and $\class{I} = \class{F} \cap \class{W}$
is the class of trivially fibrant objects.
\end{proposition}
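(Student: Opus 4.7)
The plan is to apply Theorem~\ref{them-Hoveys correspondence}, setting $\class{Q} = \cat{A}$ (so that every object will be cofibrant) and $\class{R} = \class{F}$. To invoke the theorem, I must exhibit two complete cotorsion pairs sharing the given $\class{W}$ as their class of trivial objects: namely $(\class{Q} \cap \class{W}, \class{R}) = (\class{W}, \class{F})$ and $(\class{Q}, \class{R} \cap \class{W}) = (\cat{A}, \class{F} \cap \class{W})$, and verify that $\class{W}$ is thick. Hypotheses (1) and (2) give the first cotorsion pair and the thickness of $\class{W}$ for free, so the real work reduces to establishing the second cotorsion pair, which in turn hinges on the identification $\class{F} \cap \class{W} = \class{I}$.

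For the identification, first note that $\class{I} \subseteq \class{F}$: since $(\class{W}, \class{F})$ is a cotorsion pair we have $\class{F} = \rightperp{\class{W}}$, and any injective $I$ satisfies $\Ext^1(W,I) = 0$ for every $W$. Combined with hypothesis (3), this gives $\class{I} \subseteq \class{F} \cap \class{W}$. Conversely, let $X \in \class{F} \cap \class{W}$. Using that $\cat{A}$ has enough injectives, pick a short exact sequence
\[
0 \xrightarrow{} X \xrightarrow{} I \xrightarrow{} C \xrightarrow{} 0
\]
with $I \in \class{I}$. Since $I \in \class{I} \subseteq \class{W}$ and $X \in \class{W}$, thickness of $\class{W}$ forces $C \in \class{W}$. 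Then $\Ext^1(C,X) = 0$ because $C \in \class{W}$ and $X \in \class{F}$, so the sequence splits. Hence $X$ is a direct summand of an injective and therefore itself lies in $\class{I}$.

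Next I check that $(\cat{A}, \class{I})$ is a complete cotorsion pair. That it is a cotorsion pair is immediate: $\rightperp{\cat{A}} = \class{I}$ by the very definition of injectivity, and $\leftperp{\class{I}} = \cat{A}$ trivially. For completeness, enough injectives in $\cat{A}$ supplies, for each $A \in \cat{A}$, a short exact sequence $0 \to A \to I \to I/A \to 0$ with $I \in \class{I}$ and $I/A \in \cat{A}$, giving the ``enough injectives'' half of completeness; meanwhile the trivial sequence $0 \to 0 \to A \to A \to 0$ supplies the ``enough projectives'' half. Feeding the two cotorsion pairs $(\class{W},\class{F})$ and $(\cat{A}, \class{I})$, together with the thick class $\class{W}$, into Theorem~\ref{them-Hoveys correspondence} produces the desired abelian model structure, with cofibrant class $\cat{A}$, fibrant class $\class{F}$, trivial class $\class{W}$, and trivially fibrant class $\class{F} \cap \class{W} = \class{I}$.

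The main obstacle is the identification $\class{F} \cap \class{W} = \class{I}$; this is the one place where all three hypotheses are used simultaneously (enough injectives in $\cat{A}$ to produce the resolution, $\class{I} \subseteq \class{W}$ to keep the cokernel trivial, and thickness of $\class{W}$ to conclude). Once that is in hand, the remaining verifications—namely completeness of the trivial cotorsion pair $(\cat{A}, \class{I})$—are formal.
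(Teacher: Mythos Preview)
Your proof is correct and follows essentially the same approach as the paper: both apply Hovey's correspondence with $\class{Q}=\cat{A}$ and $\class{R}=\class{F}$, reduce the work to the identification $\class{F}\cap\class{W}=\class{I}$, and establish the nontrivial inclusion via the splitting argument (embed $X$ into an injective, use thickness to put the cokernel in $\class{W}$, then split). Your write-up is slightly more explicit about why $(\cat{A},\class{I})$ is a complete cotorsion pair, but otherwise the arguments coincide.
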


\begin{proof}
Let $\class{A}$ also denote the class of all objects in the
category. Then $(\class{A},\class{I})$ is a complete cotorsion pair
since the category has enough injectives. By Theorem~\ref{them-Hoveys
correspondence}, we only still need (i) $\class{A} \cap \class{W} =
\class{W}$, and (ii) $\class{F} \cap \class{W} = \class{I}$. But of
course (i) is true so we just need to prove (ii).

First we see $\class{F} \cap \class{W} \supseteq \class{I}$, because
$(\class{W},\class{F})$ being a cotorsion pair automatically implies
$\class{I} \subseteq \class{F}$, and also by assumption we have
$\class{I} \subseteq \class{W}$. Next we show $\class{F} \cap
\class{W} \subseteq \class{I}$. So suppose $X \in \class{F} \cap
\class{W}$. Then find a short exact sequence $0 \ar X \ar I \ar I/X
\ar 0$ where $I$ is injective. By hypothesis $I \in \class{W}$. But
$\class{W}$ is assumed to be thick, which means $X/I \in
\class{W}$. But now since $(\class{W},\class{F})$ is a cotorsion pair
the short exact sequence splits. Therefore $X$ is a direct summand of
$I$, proving $X \in \class{I}$.
\end{proof}

We also list the dual for easy reference.

\begin{proposition}[(Construction of a projective model
structure)]\label{prop-how to create a projective model structure}
Let $\cat{A}$ be a bicomplete abelian category with enough projectives
and denote the class of projectives by $\class{P}$. Let $\class{C}$ be
any class of objects and set $\class{W} =
\rightperp{\class{C}}$. Suppose the following conditions hold:
\begin{enumerate}
\item $(\class{C},\class{W})$ is a complete cotorsion pair.
\item $\class{W}$ is thick.
\item $\class{P} \subseteq \class{W}$.
\end{enumerate}
Then there is an abelian model structure on $\cat{A}$ where every
object is fibrant, $\class{C}$ are the cofibrant objects, $\class{W}$
are the trivial objects, and $\class{P} = \class{C} \cap \class{W}$
are the trivially cofibrant objects.
\end{proposition}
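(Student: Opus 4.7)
The plan is to invoke Theorem~\ref{them-Hoveys correspondence} in a manner exactly dual to the proof of Proposition~\ref{prop-how to create an injective model structure}. That theorem requires producing a thick class $\class{W}$ together with classes $\class{Q}$ and $\class{R}$ such that $(\class{Q}, \class{R} \cap \class{W})$ and $(\class{Q} \cap \class{W}, \class{R})$ are both complete cotorsion pairs. Since the target model structure is supposed to have every object fibrant, I will take $\class{R}$ to be the class $\class{A}$ of all objects of $\cat{A}$, and $\class{Q} = \class{C}$.

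With these choices the first required cotorsion pair reduces to $(\class{C}, \class{A} \cap \class{W}) = (\class{C}, \class{W})$, which is complete and thick by hypotheses~(1) and~(2). The second required cotorsion pair is $(\class{C} \cap \class{W}, \class{A})$; but since $\cat{A}$ has enough projectives, $(\class{P}, \class{A})$ is automatically a complete cotorsion pair. So everything reduces to verifying the identification $\class{C} \cap \class{W} = \class{P}$, which is the only substantive step.

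The inclusion $\class{P} \subseteq \class{C} \cap \class{W}$ is immediate: hypothesis~(3) gives $\class{P} \subseteq \class{W}$, while every projective lies in $\leftperp{\class{W}} = \class{C}$ because $\Ext^{1}(P,-) = 0$ for any projective $P$. For the reverse inclusion, the plan is to mimic the splitting argument in the injective case. Given $X \in \class{C} \cap \class{W}$, choose a short exact sequence $0 \ar K \ar P \ar X \ar 0$ with $P$ projective. Then $P \in \class{W}$ by hypothesis~(3) and $X \in \class{W}$ by assumption, so thickness of $\class{W}$ forces $K \in \class{W}$. Since $X \in \class{C}$ and $K \in \class{W}$, we have $\Ext^{1}(X,K) = 0$ in the cotorsion pair $(\class{C},\class{W})$, the sequence splits, and $X$ is a direct summand of $P$, hence projective.

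The only step requiring real thought is this last splitting, and it is a literal dualization of the corresponding step in Proposition~\ref{prop-how to create an injective model structure}. Once $\class{C} \cap \class{W} = \class{P}$ is established, Theorem~\ref{them-Hoveys correspondence} yields the abelian model structure, and reading off the descriptions of cofibrant objects ($\class{Q} = \class{C}$), fibrant objects ($\class{R} = \class{A}$, i.e.\ everything), trivial objects ($\class{W}$), and trivially cofibrant objects ($\class{Q} \cap \class{W} = \class{P}$) completes the proof.
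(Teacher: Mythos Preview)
Your proof is correct and is exactly the dualization the paper intends; the paper itself offers no separate argument for this proposition, merely listing it as the dual of Proposition~\ref{prop-how to create an injective model structure}. Your splitting argument with $0 \ar K \ar P \ar X \ar 0$ is the literal dual of the paper's argument with $0 \ar X \ar I \ar I/X \ar 0$, so there is nothing to add.
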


\section{Injective model structures on $\ch$}\label{sec-inj}

In this section, we give a general construction of some abelian model
structures on $\ch$ for which every object is cofibrant and the
fibrant objects are contained in complexes of injectives.  We then use
this to build many different model structures on $\ch$, and in
particular, to build one whose homotopy category is our injective
stable module category of $R$.

\begin{theorem}\label{thm-how to create injective on chain}
Given a ring $R$, let $A$ be a fixed left $R$-module.  Let $\class{F}$
be the class of $A$-acyclic complexes of injectives; that is, chain
complexes $F$ that are degreewise injective and such that $\Hom_{R}
(A,F) = \homcomplex (S^{0} (A), F)$ is exact.  Then there is a cofibrantly
generated abelian model structure on $\ch$ where every object is
cofibrant, $\class{F}$ is the class of fibrant objects,
$\class{W}=\leftperp{\class{F}}$ is the class of trivial objects, and
the injective complexes $\class{I}=\class{F}\cap \class{W}$ are the
trivially fibrant objects.  Furthermore, $\class{W}$ contains all
contractible complexes.  We call this model structure the
\textbf{$A$-acyclic injective model structure}.  The homotopy category
of the $A$-acyclic injective model structure is equivalent to the
chain homotopy category of $A$-acyclic complexes of injectives.
\end{theorem}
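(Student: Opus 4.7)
The plan is to apply Proposition~\ref{prop-how to create an injective model structure} with $\cat{A} = \ch$ and the class $\class{F}$ of $A$-acyclic complexes of injectives, so the three items to verify are that $(\class{W},\class{F})$ with $\class{W}=\leftperp{\class{F}}$ is a complete cotorsion pair cogenerated by a set, that $\class{W}$ is thick, and that $\class{I}\subseteq\class{W}$, where $\class{I}$ denotes the class of categorical injective objects of $\ch$.

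The first step is to replace $\class{W}$ by a more workable description. Since each $F\in\class{F}$ is degreewise injective, any short exact sequence of complexes beginning with $F$ is split in every degree, so $\Ext^{1}_{\ch}(X,F)=\Ext^{1}_{dw}(X,F)$. Combining this with Lemma~\ref{lemma-homcomplex-basic-lemma} and the observation that $\class{F}$ is closed under shifts (shifting preserves both degreewise injectivity and exactness of $\Hom_{R}(A,-)$), one obtains
\[
\class{W} = \{X\in\ch : \homcomplex(X,F) \text{ is exact for every } F\in\class{F}\}.
\]
Two consequences are immediate. First, because $\homcomplex(-,F)$ is degreewise exact when $F$ is degreewise injective, it sends a short exact sequence of complexes to a short exact sequence of complexes, and the resulting long exact sequence in homology yields 2-out-of-3 for $\class{W}$; closure under summands is automatic, so $\class{W}$ is thick. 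Second, for any contractible complex $X$, $\homcomplex(X,F)$ is itself contractible and hence exact, so $X\in\class{W}$; in particular the categorical injective objects of $\ch$, being contractible complexes of injective modules, lie in $\class{W}$, and $\class{W}$ contains every contractible complex.

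The main obstacle is establishing that $(\class{W},\class{F})$ is complete and cogenerated by a set, for which I plan to invoke Eklof--Trlifaj applied to
\[
S = \{D^{n}(R/\ideal{a}) : n\in\Z,\ \ideal{a}\text{ a left ideal of }R\} \cup \{S^{n}(A) : n\in\Z\}.
\]
The adjunction $\Hom_{\ch}(D^{n}(M),X)\cong\Hom_{R}(M,X_{n})$ combined with a projective resolution of $R/\ideal{a}$ yields $\Ext^{1}_{\ch}(D^{n}(R/\ideal{a}),F)\cong\Ext^{1}_{R}(R/\ideal{a},F_{n})$, so the first family forces each $F_{n}$ to be injective via Baer's criterion. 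Given degreewise injectivity, the identification $\Ext^{1}_{\ch}(S^{n}(A),F)=\Ext^{1}_{dw}(S^{n}(A),F)\cong H_{-1}\homcomplex(S^{n}(A),F)$ from Lemma~\ref{lemma-homcomplex-basic-lemma}, run for every $n\in\Z$, is equivalent to $\Hom_{R}(A,F)$ being exact at every position. Therefore $S^{\perp}=\class{F}$, and the Eklof--Trlifaj theorem provides the complete cotorsion pair cogenerated by $S$; Theorem~\ref{them-Hoveys correspondence} then upgrades this to a cofibrantly generated abelian model structure with the stated cofibrant, fibrant and trivial classes.

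Finally, since every object is cofibrant the homotopy category is the quotient of the fibrant subcategory $\class{F}$ by the model-categorical homotopy relation. A standard argument, using that the trivially fibrant objects are precisely the contractible complexes of injectives and that chain homotopies between maps into a degreewise injective complex are encoded by factorizations through such contractible complexes, identifies this relation with ordinary chain homotopy, giving the claimed description of the homotopy category.
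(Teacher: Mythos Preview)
Your proof is correct and follows essentially the same approach as the paper: both apply Proposition~\ref{prop-how to create an injective model structure}, identify $\class{F}$ as $\rightperp{S}$ for the same set $S$ via Baer's criterion and Lemma~\ref{lemma-homcomplex-basic-lemma}, use the characterization $\class{W}=\{X:\homcomplex(X,F)\text{ exact for all }F\in\class{F}\}$ to verify thickness and that contractible complexes lie in $\class{W}$, and appeal to the standard set-cogenerated completeness result (you cite Eklof--Trlifaj, the paper cites \cite[Theorem~2.4]{hovey-cotorsion}). The only differences are cosmetic: you establish thickness before completeness whereas the paper does the reverse, and the paper adds an explicit description of the generating (trivial) cofibrations.
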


\begin{proof}
We use Proposition~\ref{prop-how to create an injective model
structure}.  So we must show $(\class{W},\class{F})$ is a complete
cotorsion pair, that $\class{W}$ is thick, and that
$\class{I}\subseteq \class{W}$.  Let
\[
S = \{D^{n} (R/\ideal{a})|n\in \Z, \ideal{a} \text{ a left ideal of} R
\} \cup \{S^{n} (A)|n\in \Z \}.
\]
Then
\[
\class{F} =\rightperp{S}.
\]
Indeed, one can readily check that
\[
\Ext^{1}_{\ch}(D^{n} (R/\ideal{a}),F)\cong \Ext^{1}_{R} (R/\ideal{a},F_{n}).
\]
Baer's criterion for injectivity then implies that $\rightperp{S}$
consists of the complexes of injectives $F$ such that
\[
\Ext^{1}_{\ch} (S^{n}A,F) =0
\]
for all $n$.  But because $F$ is a complex of injectives,
\[
\Ext^{1}_{\ch} (S^{n}A,F) = \Ext^{1}_{dw} (S^{n}A,F) =
H_{n-1}\homcomplex (S^{0} (A),F),
\]
and so $\class{F}=\rightperp{S}$ as claimed.  Anytime we have a set
$S$ in a Grothendieck category with enough projectives, then
$(\leftperp{(\rightperp{S})},\rightperp{S})$ is always a complete
cotorsion pair by~\cite[Theorem~2.4]{hovey-cotorsion}, so
$(\class{W},\class{F})$ is so.

To see that $\cat{W}$ is thick, first note that, because $\cat{F}$
consists of complexes of injectives,
Lemma~\ref{lemma-homcomplex-basic-lemma} implies that $X\in \cat{W}$
if and only if $\homcomplex (X,F)$ is acyclic for all $F\in \cat{F}$.  Now
suppose we have a short exact sequence
\[
0 \xrightarrow{} X \xrightarrow{} Y \xrightarrow{} Z \xrightarrow{} 0,
\]
where two out of three of the entries are in $\cat{W}$.  Now suppose
$F\in \cat{F}$.  Since $F$ is a complex of injectives, the resulting
sequence
\[
0 \xrightarrow{} \homcomplex (Z,F) \xrightarrow{} \homcomplex (Y,F) \xrightarrow{}
\homcomplex (X,F)\xrightarrow{} 0
\]
is still short exact.  Since two out of three of these complexes are
acyclic, so is the third.

Note that if $X$ is contractible, then $\homcomplex (X,F)$ is obviously
acyclic for any $F$, so $X\in \cat{W}$.  This model structure is
cofibrantly generated by the results of~\cite[Section~6]{hovey-cotorsion}.
Explicitly, the generating trivial cofibrations can be taken to be the
set of all $D^{n} (\ideal{a})\xrightarrow{}D^{n} (R)$ for $\ideal{a}$
a left ideal of $R$ and $n\in \Z$ together with the set of all $S^{n}
(K)\xrightarrow{}S^{n} (P)$ for $n\in \Z$, where
\[
0 \xrightarrow{} K \xrightarrow{} P \xrightarrow{} A \xrightarrow{} 0
\]
is exact and $P$ is projective.  The generating cofibrations can be
taken to be the $S^{n} (\ideal{a})\xrightarrow{}S^{n} (R)$ for
$\ideal{a}$ a left ideal of $R$ and $n\in \Z$.  We leave to the reader
the statement about the homotopy category.
\end{proof}

Note that the proof of this theorem shows that the complete cotorsion
pair is cogenerated by the $D^{n} (R/\ideal{a})$ and the $S^{n}
(A)$.  This means that $\cat{W}$ consists of all summands of
transfinite extensions of chain complexes of the form $D^{n}
(R/\ideal{a})$ and $S^{n}A$.  

Of course, $\cat{W}$ is also a thick subcategory.  So, for simplicity,
if an object $B$ is in the smallest thick subcategory that is closed
under transfinite extensions and contains some set $S$ of objects, we
will say that \textbf{$B$ is built from $S$}.

In particular, if a module $B$ is built from the given module $A$,
then the complex $S^{0} (B)$ is built from $S^{0} (A)$, so $S^{0}B$ is
in $\cat{W}$.  So if $F$ is fibrant in the $A$-acyclic injective model
category, then it is also fibrant in the $B$-acyclic model structure.
Therefore, the identity functor from the $A$-acyclic injective model
category to the $B$-acyclic injective model category preserves
fibrations, and the trivial fibrations are the same in the two model
structures.  We therefore get the following proposition.

\begin{lemma}\label{lem-dependence-injective}
Suppose $A$ and $B$ are left $R$-modules and $B$ is built from $A$.
Then the identity functor is a left Quillen functor from the
$B$-acyclic injective model structure to the $A$-acyclic injective
model structure; in fact the $A$-acyclic injective model structure is
a left Bousfield localization of the $B$-acyclic injective model
structure.  In particular, if $A$ is also built from $B$, then the
$A$-acyclic injective model structure coincides with the $B$-acyclic
injective model structure.
\end{lemma}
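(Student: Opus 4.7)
The plan is to leverage the essential observation recorded in the paragraph immediately preceding the lemma: the one step with genuine content is showing $\cat{F}_A \subseteq \cat{F}_B$, and everything else reduces to Hovey's correspondence and the definition of a left Bousfield localization. To establish the inclusion I would argue as follows. The functor $S^{0} \mathcolon \rmod \to \ch$ is exact and preserves colimits, so $B$ being built from $A$ in $\rmod$ implies that $S^{n}(B)$ is built from $S^{n}(A)$ in $\ch$ for each $n \in \Z$. The proof of Theorem~\ref{thm-how to create injective on chain} places every $S^{n}(A)$ in the cogenerating set for the cotorsion pair $(\cat{W}_A, \cat{F}_A)$, so $S^{n}(A) \in \cat{W}_A$; since $\cat{W}_A$ is thick, summand-closed, and closed under transfinite extensions, it follows that $S^{n}(B) \in \cat{W}_A$ as well. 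For any $F \in \cat{F}_A$ the cotorsion relation now gives $\Ext^{1}_{\ch}(S^{n}B, F) = 0$ for every $n$, and because $F$ is a complex of injectives Lemma~\ref{lemma-homcomplex-basic-lemma} identifies this group with $H_{n-1}\homcomplex(S^{0}B, F)$. Thus $\homcomplex(S^{0}B, F)$ is acyclic and $F \in \cat{F}_B$.

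Given $\cat{F}_A \subseteq \cat{F}_B$, the remaining assertions are formal. In either structure a fibration is an epimorphism with kernel in $\cat{F}$, and a trivial fibration is an epimorphism with kernel in the common class $\cat{F} \cap \cat{W} = \cat{I}$ of injective complexes, so the inclusion $\cat{F}_A \subseteq \cat{F}_B$ makes the identity from the $A$-acyclic structure to the $B$-acyclic structure into a right Quillen functor; equivalently, the identity from the $B$-acyclic structure to the $A$-acyclic structure is a left Quillen functor. For the Bousfield localization statement, cofibrations in both structures are precisely the monomorphisms (every object is cofibrant), and applying $\leftperp{(-)}$ to $\cat{F}_A \subseteq \cat{F}_B$ gives $\cat{W}_B \subseteq \cat{W}_A$; factoring any weak equivalence as a trivial cofibration (a monomorphism with cokernel in $\cat{W}$) followed by a trivial fibration (an epimorphism with kernel in $\cat{I}$) then shows every weak equivalence of the $B$-structure is a weak equivalence of the $A$-structure, which combined with the agreement of cofibrations is precisely the Bousfield localization claim. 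Finally, if $A$ is also built from $B$ the symmetric argument gives $\cat{F}_A = \cat{F}_B$, and since by Proposition~\ref{prop-how to create an injective model structure} the model structure is determined by its class of fibrant objects, the two structures coincide.
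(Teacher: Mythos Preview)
Your proof is correct and follows essentially the same approach as the paper, which records the key observation $\cat{F}_A \subseteq \cat{F}_B$ in the paragraph preceding the lemma and leaves the remaining formalities to the reader. You have simply spelled out in more detail the Bousfield localization claim (same cofibrations, $\cat{W}_B \subseteq \cat{W}_A$) that the paper leaves implicit.
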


Before considering examples, we point out some basic properties of the
map from $\rmod$ to the homotopy category of the $A$-acyclic injective
model structure.

\begin{proposition}\label{prop-homotopy-cat}
Let $R$ be a ring and $A$ a left $R$-module.  Consider the composite
functor
\[
\gamma \mathcolon \rmod \xrightarrow{S^{0}} \ch \xrightarrow{} \Ho \ch
\]
from $R$-modules to the homotopy category of the $A$-acyclic injective
model structure.  Then $\gamma$ is an exact coproduct-preserving
functor to the triangulated category $\Ho \ch$.  The kernel of
$\gamma$ contains all modules built from $A$.
\end{proposition}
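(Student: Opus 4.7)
The plan is to verify each of the three properties of $\gamma$ in turn, concentrating the real work on the statement about the kernel. The triangulated structure on $\Ho\ch$ is a general feature of abelian model structures, proved in~\cite{hovey-cotorsion}: the shift $\Sigma$ on $\ch$ is already invertible on the nose, and the homotopy category of any abelian model structure inherits a triangulation from Hovey's stability result, with $\Sigma$ serving as the shift. For coproduct preservation, $S^{0}$ is a left adjoint (to $Z_{0}$) and hence preserves coproducts, and the localization $\ch \to \Ho\ch$ preserves coproducts because every object is cofibrant, so any coproduct already computes the derived coproduct. For exactness, a short exact sequence $0 \to M' \to M \to M'' \to 0$ in $\rmod$ maps via $S^{0}$ to a degreewise short exact sequence in $\ch$, and a standard fact about abelian model categories (again from~\cite{hovey-cotorsion}) produces a connecting morphism making this into a distinguished triangle once all three terms are cofibrant, which they are here.

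For the kernel claim, I would set
\[
\cat{T} = \{N \in \rmod : S^{0}N \in \cat{W}\}
\]
and show $\cat{T}$ contains every module built from $A$. By the universal property defining "built from," it suffices to check that $\cat{T}$ is a thick subcategory of $\rmod$ closed under transfinite extensions and containing $A$. The membership $A \in \cat{T}$ follows because $S^{0}(A)$ appears in the generating set $S$ exhibited in the proof of Theorem~\ref{thm-how to create injective on chain}, and such generators lie in $\leftperp{(\rightperp{S})}=\cat{W}$. Thickness of $\cat{T}$ follows from exactness of $S^{0}$ combined with thickness of $\cat{W}$. Closure under transfinite extensions follows because $S^{0}$ preserves colimits (being a left adjoint) and monomorphisms (being exact), while $\cat{W}$ is itself closed under transfinite extensions (as the left half of a cotorsion pair cogenerated by a set, per the proof of Theorem~\ref{thm-how to create injective on chain}). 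Minimality then gives $\cat{T}$ contains the class of modules built from $A$.

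The main obstacle, such as it is, lies in the formal passage from short exact sequences in $\ch$ to distinguished triangles in $\Ho\ch$; every other step reduces to transferring thickness and transfinite-extension closure along the exact colimit-preserving functor $S^{0}$, which is essentially automatic.
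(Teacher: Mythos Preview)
Your proposal is correct and follows essentially the same route as the paper. For the kernel claim, the paper first rephrases ``$S^{0}M$ trivial'' as ``$\Hom(M,X)$ exact for all fibrant $X$'' via Lemma~\ref{lemma-homcomplex-basic-lemma}, and then appeals to the discussion preceding Lemma~\ref{lem-dependence-injective} (which observes that $S^{0}$ carries modules built from $A$ into $\cat{W}$); your version skips the $\Hom$ reformulation and argues directly that $\cat{T}=\{N:S^{0}N\in\cat{W}\}$ inherits thickness and closure under transfinite extensions from $\cat{W}$, which is the same argument packaged slightly more explicitly.
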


\begin{proof}
The homotopy category $\Ho \ch$ is triangulated because the shift is
an equivalence of categories and is also equivalent to the suspension
functor that exists in any model category (see Section~7.1
of~\cite{hovey-model} for a general discussion of when the
homotopy category of a pointed model category is triangulated).

Any monomorphism in $\ch$ is a cofibration in the $A$-acyclic
injective model structure, and so short exact
sequences in $\ch$ give rise to exact triangles in $\Ho \ch$. The
functor $S^{0}$ is exact, so we conclude that $\gamma$ is exact.

The kernel of $\gamma$ consists of all modules $M$ such that $S^{0}M$
is trivial in the $A$-acyclic injective model structure.
In view of Lemma~\ref{lemma-homcomplex-basic-lemma}, this is all
modules $M$ such that $\Hom (M,X)$ is exact for all $A$-acyclic
complexes of injectives $X$.  In view of the proof of
Lemma~\ref{lem-dependence-injective}, this includes all modules built
from $A$.
\end{proof}

The simplest case of the $A$-acyclic injective model structure is of
course when $A=0$.

\begin{corollary}\label{cor-complexes-of injectives}
For any ring $R$ there is a cofibrantly generated abelian model
structure on $\ch$, the \textbf{Inj model structure}, in
which every object is cofibrant and the fibrant objects are the
complexes of injectives.  The trivially fibrant objects coincide with
the injective complexes, and the homotopy category is the chain
homotopy category of complexes of injectives.
\end{corollary}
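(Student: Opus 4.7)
The plan is to apply Theorem~\ref{thm-how to create injective on chain} with the choice $A = 0$. First I would observe that when $A = 0$, the complex $\homcomplex(S^0(0), F) = \Hom_R(0, F) = 0$ is trivially exact for every $F$, so the acyclicity condition defining $\class{F}$ becomes vacuous. Consequently $\class{F}$ reduces to the class of all degreewise injective complexes, which is precisely the class of complexes of injective modules.

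With this specialization, the theorem directly produces a cofibrantly generated abelian model structure on $\ch$ in which every object is cofibrant and the fibrant objects are exactly the complexes of injectives. The set $S$ appearing in the proof of the theorem collapses to $\{D^{n}(R/\ideal{a}) \mid n \in \Z,\ \ideal{a} \text{ a left ideal of } R\}$, since the generators of the form $S^{n}(0)$ are zero and contribute nothing; similarly, the would-be generating trivial cofibrations $S^{n}(K) \to S^{n}(P)$ arising from resolutions of $A = 0$ are just identity maps and may be discarded. Hence the generating cofibrations are the $S^{n}(\ideal{a}) \to S^{n}(R)$ and the generating trivial cofibrations are the $D^{n}(\ideal{a}) \to D^{n}(R)$.

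The identification of the trivially fibrant objects is immediate from the theorem: we have $\class{F} \cap \class{W} = \class{I}$, where $\class{I}$ denotes the injective objects of $\ch$, which in turn are classically the contractible complexes of injective $R$-modules. Finally, the statement about the homotopy category is just the last sentence of Theorem~\ref{thm-how to create injective on chain} specialized to $A = 0$: the chain homotopy category of $0$-acyclic complexes of injectives is simply the chain homotopy category of complexes of injectives. There is no substantive obstacle, as the corollary is entirely a matter of unpacking the trivial case $A = 0$ of the preceding theorem.
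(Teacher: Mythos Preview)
Your proposal is correct and matches the paper's approach exactly: the paper simply remarks that this is the case $A=0$ of Theorem~\ref{thm-how to create injective on chain} and states the corollary without further proof. Your additional unpacking of the generating sets and the vacuity of the acyclicity condition is accurate but more detailed than what the paper provides.
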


At the other extreme, we could take $A$ to be the direct sum of all the
finitely generated modules.  Since every module is a transfinite
extension of finitely generated modules, the fibrant objects in this
case would be exact complexes of injectives $X$ such that $\Hom (M,X)$ is
exact for all left $R$-modules $M$.  In particular, by taking
$M=Z_{n}X$, we see that the inclusion $Z_{n}X\xrightarrow{}X_{n}$
factors through $d_{n+1}\mathcolon X_{n+1}\xrightarrow{}X_{n}$.  This
means that $X_{n+1}\cong Z_{n+1}X\oplus Z_{n}X$, from which it follows
easily that $X$ is an injective complex.  So this is the model
structure in which every map is a weak equivalence, the cofibrations
are the monomorphisms, and the fibrations are the split epimorphisms
with injective kernel.

It is more interesting to take $A=R$, when we get the following
corollary. Note that any projective module is built from $R$, since
it is a summand of a direct sum of copies of $R$.

\begin{corollary}\label{cor-exact-Inj}
For any ring $R$ there is a cofibrantly generated abelian model
structure on $\ch$, the \textbf{exact injective model structure}, in
which every object is cofibrant and the fibrant objects are the exact
complexes of injectives.  The trivially fibrant objects coincide with
the injective complexes.  For this model structure, all projective
modules are sent to $0$ by the functor $\gamma$ of
Proposition~\ref{prop-homotopy-cat}.  The homotopy category is the
chain homotopy category of exact complexes of injectives, the stable
derived category of~\cite{krause-stable}, and is denoted $S(R)$.
\end{corollary}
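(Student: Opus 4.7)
The plan is to apply Theorem~\ref{thm-how to create injective on chain} with $A = R$ and then invoke Proposition~\ref{prop-homotopy-cat} to identify the kernel of $\gamma$. The key observation is that $\Hom_R(R, X) \cong X$ as a chain complex of abelian groups, so an $R$-acyclic complex of injectives is precisely an exact complex of injectives. With this identification, Theorem~\ref{thm-how to create injective on chain} immediately yields a cofibrantly generated abelian model structure on $\ch$ in which every object is cofibrant, the fibrant objects are the exact complexes of injectives, and the trivially fibrant objects $\class{I} = \class{F} \cap \class{W}$ are the injective complexes.

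Next I would verify that every projective $R$-module $P$ lies in the kernel of $\gamma$. By Proposition~\ref{prop-homotopy-cat}, it suffices to show that $P$ is built from $R$, i.e.\ that $P$ lies in the smallest thick subcategory of $\rmod$ closed under transfinite extensions and containing $R$. A free module $\bigoplus_{i \in I} R$ is a transfinite extension of copies of $R$ (index $I$ by an ordinal and take the filtration by partial sums, whose successive quotients are copies of $R$), and any projective is a direct summand of a free module. Since the class of modules built from $R$ is closed under summands and transfinite extensions by definition, every projective is built from $R$, and hence $\gamma(P) = 0$.

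Finally, for the identification of the homotopy category, recall that in any abelian model structure in which every object is cofibrant, the homotopy category is equivalent to the quotient of the category of fibrant objects by the chain homotopy relation; this is the content of the last sentence of Theorem~\ref{thm-how to create injective on chain}. Specializing, the homotopy category here is the chain homotopy category of exact complexes of injectives, which is by definition Krause's stable derived category from~\cite{krause-stable}.

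Essentially no step is a real obstacle: both the model structure and the computation of $\ker \gamma$ are direct specializations of the results already established in this section. The only point that requires a sentence of justification is the assertion that projectives are built from $R$, and even this is routine once one notes that a direct sum indexed by an ordinal is a transfinite extension of its summands in the sense defined after Proposition~\ref{prop-finite-injective}.
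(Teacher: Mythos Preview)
Your proposal is correct and matches the paper's approach exactly: the paper simply notes before stating the corollary that one takes $A=R$ in Theorem~\ref{thm-how to create injective on chain}, and that any projective module is built from $R$ since it is a summand of a direct sum of copies of $R$. Your write-up just fills in the routine details of those two remarks.
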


Note that, in general, if we replace $A$ by $A\oplus R$, we change the
fibrant objects from the $A$-acyclic complexes of injectives to the
exact $A$-acyclic complexes of injectives.  We therefore sometimes
refer to the $A\oplus R$-acyclic injective model structure as the
\textbf{exact $A$-acyclic injective model structure}.

If $R$ is left Noetherian, we can take $A$ to be the direct sum of all
the indecomposable injectives.  This will give us a model structure
where the fibrant objects are complexes of injectives $X$ such that
$\Hom_{R} (I,X)$ is exact for all injective modules $I$.  Such
complexes are called \textbf{Inj-acyclic}, and if they are also exact
they are called \textbf{totally acyclic complexes of injectives}.
We then get the following corollary.

\begin{corollary}\label{cor-totally-acyclic-model}
For any left Noetherian ring $R$, there is a cofibrantly generated
abelian model structure on $\ch$, the \textbf{Inj-acyclic injective
model structure}, in which every object is cofibrant and the fibrant
objects are the Inj-acyclic complexes of injectives. There is also a
cofibrantly generated abelian model structure on $\ch$, the
\textbf{totally acyclic injective model structure}, in which every
object is cofibrant and the fibrant objects are the totally acyclic
complexes of injectives. In both of these model structures, the
trivially fibrant objects coincide with the injective complexes.  In
the Inj-acyclic injective model structure, all injective modules are
sent to $0$ by the functor $\gamma$ of
Proposition~\ref{prop-homotopy-cat}\usc in the totally acyclic
injective model structure, both projective modules and injective
modules are sent to $0$ by $\gamma$.  In both cases, the homotopy
category is the chain homotopy category of the fibrant objects.
\end{corollary}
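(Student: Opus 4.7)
The plan is to apply Theorem~\ref{thm-how to create injective on chain} twice, with two carefully chosen modules $A$, and then read off the kernel-of-$\gamma$ statements from Proposition~\ref{prop-homotopy-cat}. Since $R$ is left Noetherian, Matlis's structure theorem says that every injective left $R$-module is a direct sum of indecomposable injectives and that the isomorphism classes of indecomposable injectives form a set $\{E_{\alpha}\}_{\alpha\in\Lambda}$. Set $A=\bigoplus_{\alpha}E_{\alpha}$. Because the class of injectives is closed under direct sums over a left Noetherian ring, $A$ is itself injective. Applying Theorem~\ref{thm-how to create injective on chain} with this $A$ produces the Inj-acyclic injective model structure; applying it with $A'=A\oplus R$ produces the totally acyclic injective model structure.

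The only non-formal verification is that the fibrant class of Theorem~\ref{thm-how to create injective on chain} matches the advertised class. Both consist of complexes of injectives, so it suffices to check that, for a complex $X$ of injectives, $\Hom_R(A,X)$ is exact if and only if $\Hom_R(I,X)$ is exact for every injective $I$. The reverse direction is trivial since $A$ is injective. For the forward direction, write $I=\bigoplus_{j}E_{\alpha_j}$; each $E_{\alpha_j}$ is a summand of $A$, so $\Hom_R(E_{\alpha_j},X)$ is a summand of the exact complex $\Hom_R(A,X)$, hence exact; and then $\Hom_R(I,X)\cong \prod_{j}\Hom_R(E_{\alpha_j},X)$ is exact because products are exact in $\rmod$ and therefore products of exact complexes of abelian groups are exact. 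For the totally acyclic model structure, the same argument handles the $A$-summand, while the $R$-summand enforces the additional condition $\Hom_R(R,X)\cong X$ exact, which is precisely exactness of $X$; so together one obtains the totally acyclic complexes of injectives. The statements that the trivially fibrant objects equal the injective complexes and that the homotopy category is the chain homotopy category of the fibrant objects are then immediate from Theorem~\ref{thm-how to create injective on chain}.

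For the statements about $\gamma$, I invoke Proposition~\ref{prop-homotopy-cat}: the kernel of $\gamma$ contains everything built from $A$ (respectively $A'$). In the Inj-acyclic case, every injective $R$-module is a direct sum of summands of $A$, hence built from $A$, so $\gamma$ kills all injectives. In the totally acyclic case, $R$ is a summand of $A'$, so every projective is built from $A'$; every injective is built from $A$ and hence from $A'$; therefore $\gamma$ kills both projectives and injectives. The main obstacle, such as it is, is the verification in the middle paragraph; it is light, relying only on Matlis's theorem and exactness of products in $\rmod$, with no genuinely new ingredient beyond what Theorem~\ref{thm-how to create injective on chain} already delivers.
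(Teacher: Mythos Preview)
Your proposal is correct and follows essentially the same approach as the paper: take $A$ to be the direct sum of the indecomposable injectives (and $A\oplus R$ for the totally acyclic case), apply Theorem~\ref{thm-how to create injective on chain}, and read off the kernel statements from Proposition~\ref{prop-homotopy-cat} using that every injective is built from $A$. You have supplied more detail in the verification that the fibrant class matches (via Matlis and the product argument) than the paper does, but the underlying argument is the same.
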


We now consider the simplest case, when $R$ is Gorenstein.  Here we
are using Gorenstein in the non-commutative sense, so that $R$ is left
and right Noetherian and has finite self-injective dimension on the
left and the right.  The main property of Gorenstein rings that we
need is that the modules of finite projective dimension coincide with
the modules of finite injective dimension;
see~\cite[Chapter~9]{enochs-jenda-book}.

\begin{proposition}\label{prop-Gor}
Suppose $R$ is Gorenstein.  Then the exact injective model structure,
the totally acyclic injective model structure and the Inj-acyclic
injective model structure all coincide.
\end{proposition}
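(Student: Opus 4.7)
The plan is to apply Lemma~\ref{lem-dependence-injective} and show that $R$, the direct sum $I$ of all indecomposable injective $R$-modules, and $R \oplus I$ are each built from one another. Since $R$ and $I$ are both summands of $R \oplus I$, they are automatically built from it, so the heart of the matter is to establish (a) $I$ is built from $R$, and (b) $R$ is built from $I$. Once these are in hand, the class of modules built from any fixed object, being closed under coproducts, contains $R \oplus I$ as soon as it contains both $R$ and $I$; Lemma~\ref{lem-dependence-injective} will then yield the coincidence of all three model structures.

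For (a), the defining feature of a Gorenstein ring is that the modules of finite projective dimension coincide with those of finite injective dimension, so every injective module has projective dimension bounded by the self-injective dimension $d$. Thus $I$ admits a finite projective resolution
\[
0 \to P_d \to \cdots \to P_0 \to I \to 0.
\]
Each $P_j$ is a summand of a free $R$-module and hence built from $R$. Splicing the resolution into short exact sequences and using that the class built from $R$ is closed under extensions yields that $I$ is built from $R$.

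For (b), $R$ itself has finite injective dimension, so admits a finite injective coresolution
\[
0 \to R \to J^0 \to \cdots \to J^d \to 0.
\]
Since $R$ is left Noetherian, Matlis theory guarantees that each $J^i$ decomposes as a direct sum of indecomposable injectives; each such summand is a summand of $I$ and hence built from $I$. As the class built from $I$ is closed under arbitrary coproducts (realized as transfinite extensions), each $J^i$ is built from $I$, and splicing the coresolution then shows $R$ itself is built from $I$. The only mild subtlety is handling the potentially infinite coproducts appearing in the Matlis decomposition, but this is precisely what closure under transfinite extensions is designed to absorb. Combining (a) and (b) with Lemma~\ref{lem-dependence-injective} finishes the proof.
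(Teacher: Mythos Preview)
Your proof is correct and closely parallels the paper's, though the packaging differs. The paper works directly with the fibrant complexes: for any complex of injectives $X$, the class $\class{E}$ of modules $M$ with $\Hom_R(M,X)$ exact is thick (this is immediate from the long exact sequence, since $X$ is degreewise injective). Thus if $X$ is exact (so $R\in\class{E}$), then $\class{E}$ contains all modules of finite projective dimension, hence all injectives by the Gorenstein hypothesis, so $X$ is totally acyclic; and dually if $X$ is Inj-acyclic then $\class{E}$ contains all modules of finite injective dimension, hence $R$, so $X$ is exact. This is the same finite-resolution idea you use, but applied at the level of fibrant objects rather than at the level of the defining module $A$.

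Your route via Lemma~\ref{lem-dependence-injective} is the natural one given that the lemma was set up precisely for such comparisons, and it is a fine alternative. The tradeoff is that you need slightly more: because $I$ is an infinite direct sum, you must invoke closure under transfinite extensions and Matlis decomposition to get each $J^i$ built from $I$, whereas the paper's direct argument needs only thickness and never has to decompose an injective. Both approaches rest on the same Gorenstein input (finite projective dimension $\Leftrightarrow$ finite injective dimension), so the difference is organizational rather than substantive.
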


\begin{proof}
Consider a general complex of injectives $X$, and let $\class{E}$
denote the collection of all $R$-modules $M$ such that $\Hom_{R}
(M,X)$ is acyclic.  Note that $\class{E}$ is obviously closed under
direct sums and retracts.  We claim that $\class{E}$ is thick.
Indeed, if we have a short exact sequence
\[
0 \xrightarrow{} M' \xrightarrow{} M \xrightarrow{} M'' \xrightarrow{} 0
\]
there is an induced short exact sequence of complexes
\[
0 \xrightarrow{} \Hom (M'',X)\xrightarrow{}\Hom (M,X)
\xrightarrow{}\Hom (M',X)\xrightarrow{}0,
\]
because $X$ is a complex of injectives.  The long exact sequence in
homology now proves that $\class{E}$ is thick.

It follows that, if $X$ is an exact complex of injectives, then $\Hom
(M,X)$ is exact for all $M$ of finite projective dimension.  In
particular, if $R$ is Gorenstein, then every injective module has
finite projective dimension, so $X$ is totally acyclic.  Thus the
exact injective and totally acyclic injective model structures
coincide.

Similarly, if $X$ is Inj-acyclic, it follows that $\Hom (M,X)$
is exact for all $M$ of finite injective dimension.  If $R$ is
Gorenstein, then $R$ itself has finite injective dimension, and so $X$
is acyclic.  Thus the Inj acyclic and totally acyclic Inj model
structure coincide as well.
\end{proof}

We now give an example to show that the exact injective model
structure may differ from the totally acyclic injective model
structure if the ring $R$ is not Gorenstein.  Let $R=k[x,y]/
(x^{2},xy,y^{2})$ where $k$ is a field, so that $R$ is an Artinian
local ring of Krull dimension zero with nilpotent maximal ideal
$\ideal{m}= (x,y)$.  There is only one indecomposable injective $J$,
the injective hull of $R/\ideal{m}\cong k$.  One can easily check that
$J=R\oplus R/K$, where $K$ is generated by $(x,0),(y,-x)$, and
$(0,y)$.

\begin{proposition}\label{prop-example}
For $R=k[x,y]/ (x^{2},xy,y^{2})$ as above, every module is built from
$R$ and $J$.  Therefore every totally acyclic complex of injectives is
actually an injective complex, and so the homotopy category of the
totally acyclic injective model structure is trivial.  On the other
hand, there is an exact complex of injectives that is not totally
acyclic, so the homotopy category of the exact injective model
structure is non-trivial.  Similarly, there is a complex of injectives
that is Inj-acyclic but not totally acyclic.  
\end{proposition}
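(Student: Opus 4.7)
I would prove the four assertions in order. First, to show every $R$-module is built from $\{R,J\}$, the key computation is that in the defining presentation $J = R^2/K$, the submodule $K\subseteq R^2$ generated by $(x,0), (y,-x), (0,y)$ is isomorphic to $k^3$ as an $R$-module. Indeed, since $\mathfrak{m}^2 = 0$, each of the three generators is annihilated by $\mathfrak{m}$, and they are $k$-linearly independent in $R^2$, so $K$ is a $3$-dimensional $k$-vector space on which $R$ acts through its quotient $k$. Letting $\mathcal{T}$ denote the thick subcategory of $\rmod$ containing $\{R,J\}$ and closed under transfinite extensions, the $2$-out-of-$3$ property applied to the short exact sequence $0 \to k^3 \to R^2 \to J \to 0$ forces $k^3 \in \mathcal{T}$, and closure under retracts then yields $k \in \mathcal{T}$. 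Since $R$ is Artinian, every $R$-module can be written as a transfinite extension of cyclic submodules by well-ordering its elements and taking the increasing chain of submodules they generate; each such cyclic module is of finite length with all composition factors isomorphic to $k$, so closure of $\mathcal{T}$ under extensions and transfinite extensions places every module inside $\mathcal{T}$.

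Next, I would deduce that every totally acyclic complex of injectives is an injective complex. For a totally acyclic $X$, the class $\{M : \Hom_R(M, X) \text{ is acyclic}\}$ is thick and closed under transfinite extensions: thickness comes from the long exact sequence in homology obtained by applying $\Hom_R(-, X)$ to a short exact sequence of modules (using that $X$ is degreewise injective), while closure under transfinite extensions reduces to the fact that the resulting inverse tower of $\Hom$-complexes has surjective transition maps. Combined with the first step, $\Hom_R(M, X)$ is acyclic for every $R$-module $M$. Specialising to $M = Z_n X$ and applying Lemma~\ref{lemma-homcomplex-basic-lemma}, the inclusion $Z_n X \hookrightarrow X_n$, regarded as a chain map $S^n(Z_n X) \to X$, is null-homotopic, producing a section of the surjection $X_{n+1} \twoheadrightarrow Z_n X$. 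Hence $X_{n+1} \cong Z_{n+1} X \oplus Z_n X$, each cycle is an injective summand of $X_{n+1}$, and $X \cong \bigoplus_n D^n(Z_{n-1} X)$ is an injective complex. Consequently every fibrant object in the totally acyclic injective model structure is already trivially fibrant, and the corresponding homotopy category is trivial.

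For the existence claims in the final two assertions, I would appeal to general triangulated-category theory rather than attempt a direct construction. Since $\mathrm{soc}(R) = \mathfrak{m} \cong k^2$ is $2$-dimensional, $R$ has non-simple socle and is not self-injective, hence not Gorenstein, so the singularity category $D^b_{\mathrm{fg}}(R)/K^b(\mathrm{proj}\, R)$ is nonzero (the class of $k$ is nonzero since $k$ has infinite projective dimension). By Krause's theorem, Corollary~\ref{cor-exact-Inj}'s stable derived category $S(R) = K_{\mathrm{ac}}(\mathrm{Inj}\, R)$ is then a compactly generated triangulated category whose compact subcategory contains this nonzero singularity category, so $S(R)$ itself is nonzero; a nonzero object of $S(R)$ is an exact complex of injectives not chain homotopy equivalent to an injective complex, hence not totally acyclic by the previous step. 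An analogous argument applied to the Inj-acyclic injective model structure of Corollary~\ref{cor-totally-acyclic-model} yields the fourth assertion. I expect the main obstacle to be the absence of a clean direct construction: any exact complex of injectives must have every cycle $Z_n X$ realised as a quotient of an injective, and extending an injective resolution of a simple module leftward produces left-syzygies such as $R/(y)$ which are not themselves quotients of any injective $R$-module, so any explicit nonzero object of $S(R)$ must instead be built through infinite direct sums or abstract fibrant replacement rather than from finite-dimensional data.
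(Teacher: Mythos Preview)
Your arguments for the first two assertions are correct and close in spirit to the paper's. For the first, the paper uses the short exact sequence $0 \to R \to J \oplus J \to k^{3} \to 0$ coming from the injective envelope of $R$, whereas you use the presentation $0 \to K \to R^{2} \to J \to 0$ with $K \cong k^{3}$; both extract $k$ from the thick subcategory generated by $R$ and $J$, and yours is arguably more direct since the presentation of $J$ is already given. Your deduction of the second assertion matches the paper's.

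For the third assertion your route genuinely differs. You invoke Krause's theorem that $K_{\mathrm{ac}}(\mathrm{Inj}\,R)$ is compactly generated with compact objects the singularity category, observe the latter is nonzero because $k$ has infinite projective dimension, and conclude. This is valid but imports substantial external machinery. The paper instead builds an explicit example: $X_{n}=\bigoplus_{i\geq 1} J$ with differential sending $\alpha_{i}\mapsto\gamma_{2i-1}$ and $\beta_{i}\mapsto\gamma_{2i}$, so that $\ker d=\im d=\bigoplus_{i} k\gamma_{i}$ and the cycles are visibly not injective. The explicit construction is self-contained and confirms your intuition that infinite direct sums are unavoidable; the abstract argument is shorter once Krause is available.

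The genuine gap is the fourth assertion. Writing ``an analogous argument'' is not a proof: there is no off-the-shelf analog of Krause's theorem identifying the homotopy category of Inj-acyclic complexes with something manifestly nonzero. One can push an abstract argument through---the Inj-acyclic complexes are the right orthogonal in $K(\mathrm{Inj}\,R)$ to the compact object $S^{0}J$, and this orthogonal vanishes only if $\mathrm{thick}(J)$ exhausts the compacts $D^{b}(\mathrm{mod}\,R)$, which fails since $R$ has infinite injective dimension---but this chain of reasoning requires Krause's identification of compacts together with a Neeman--Ravenel style telescope argument, none of which you supply. The paper again proceeds by direct construction: with $Y_{n}=\bigoplus_{i\geq 1}J$ and differential $\alpha_{2i-1}\mapsto\gamma_{i}$, $\beta_{2i}\mapsto\gamma_{i}$, $\alpha_{2i}\mapsto 0$, $\beta_{2i-1}\mapsto 0$, one checks that $\alpha_{1}-\beta_{2}$ is a nonbounding cycle while $\Hom(J,Y)$ is exact. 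You should either supply the missing triangulated-category details or, better, give such an explicit complex.
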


\begin{proof}
The injective envelope of $R$ is $J\oplus J$.  Indeed, we can write
$J=k\langle \alpha ,\beta ,\gamma \rangle$, where $x\alpha =y\beta =0$
and $y\alpha =\gamma =x\beta$.  The map $R\xrightarrow{}J\oplus J$ that takes
$1$ to $(\alpha ,\beta )$ is then a monomorphism.  The cokernel of
this map is $k\oplus k\oplus k$.  Therefore, $k$ is built from $R$ and
$J$.  Since $k$ is the only simple $R$-module and $R$ is Artinian,
every finitely generated module is built from $k$.  But then every
module is built from $k$.  It follows that if $X$ is a totally acyclic
complex of injectives, then $\Hom (M,X)$ is acyclic for any $M$. We
have seen in the discussion following Corollary~\ref{cor-complexes-of
injectives} that this means that $X$ is injective as a complex.

To construct an example of an exact complex of injectives that is not
an injective complex, so not totally acyclic, let
$X_{n}=\oplus_{i=1}^{\infty} J $ for each $n$, and define $d\mathcolon
X_{n}\xrightarrow{}X_{n-1}$ by $d_{\alpha_{i}}=\gamma_{2i-1}$ (that
is, send the $\alpha$ in $J_{i}$ to the $\gamma$ in $J_{2i-1}$) and $d
(\beta_{i})=\gamma_{2i}$.  One can easily check then that
\[
\ker d = \im d = \bigoplus_{i=1}^{\infty} k
\]
generated by the $\gamma_{i}$.  So the complex is exact and cannot be
an injective complex, since its cycles are not injective.

To find a complex of injectives $Y$ that is Inj-acyclic but not exact,
we define $Y_{n}=X_{n}$ but this time we define the differential by 
\[
d (\alpha_{2i-1})=\gamma_{i}, d (\beta_{2i-1})=0 = d (\alpha_{2i}), d
(\beta_{2i})=\gamma_{i}.
\]
This complex is obviously not exact, since $\alpha_{1}-\beta_{2}$ is a
cycle that is not a boundary.  On the other hand, $\Hom (J,Y)$ is a
countable sum of copies of $R$ in each dimension, with
differential $d (1_{2i-1})=y_{i}$ and $d (1_{2i})=x_{i}$.  One can
then check easily that $\Hom (J,Y)$ is exact, and therefore $Y$ is
Inj-acyclic.  
\end{proof}

In view of Section~\ref{sec-finite}, we can extend the Inj-acyclic and
totally acyclic injective model structures to general rings by taking
$A$ to be the direct sum of a set of absolutely clean modules as in
Proposition~\ref{prop-finite-injective}(e) that generate all others by
transfinite extensions.  This will give a model structure where the
fibrant objects are \textbf{AC-acyclic}, in the sense that $\Hom
(I,X)$ is exact for all absolutely clean modules $I$ (so in particular
for all injective modules $I$).

\begin{theorem}\label{thm-FP-injective}
For any ring $R$, there is a cofibrantly generated abelian model
structure on $\ch$, the \textbf{AC-acyclic model structure}, in which
every object is cofibrant and the fibrant objects are the AC-acyclic
complexes of injectives. There is also a cofibrantly generated abelian
model structure on $\ch$, the \textbf{exact AC-acyclic model
structure}, in which every object is cofibrant and the fibrant objects
are the exact AC-acyclic complexes of injectives. The homotopy
category of the \ulp exact\urp AC-acyclic model structure is the chain
homotopy category of \ulp exact\urp AC-acyclic complexes of
injectives.  For these model structures, all absolutely clean modules
are sent to $0$ by the functor $\gamma$ of
Proposition~\ref{prop-homotopy-cat}, and all projective modules are
sent to $0$ for the exact AC-acyclic model structure.  When $R$ is
left Noetherian, the AC-acyclic model structure coincides with the
Inj-acyclic injective model structure, and the exact AC-acyclic model
structure coincides with the totally acyclic injective model
structure.
\end{theorem}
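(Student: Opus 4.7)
The plan is to invoke Theorem~\ref{thm-how to create injective on chain} twice with carefully chosen test modules. By Proposition~\ref{prop-finite-injective}(e) there is a set $S$ of absolutely clean modules such that every absolutely clean module is a transfinite extension of members of $S$. Set $A = \bigoplus_{N \in S} N$. I claim that the $A$-acyclic injective model structure of Theorem~\ref{thm-how to create injective on chain} is the AC-acyclic model structure, while the $(A \oplus R)$-acyclic injective model structure is the exact AC-acyclic model structure. Cofibrant generation, the descriptions of cofibrant and trivially fibrant objects, and the homotopy category identification are then all immediate from Theorem~\ref{thm-how to create injective on chain}.

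The central verification is that the fibrant objects are as claimed: I must show that for a complex of injectives $X$, exactness of $\Hom_{R}(A, X)$ is equivalent to exactness of $\Hom_{R}(I, X)$ for every absolutely clean $I$. One direction is immediate since each $N \in S$ is a summand of $A$. For the converse, mimicking the thickness argument in the proof of Proposition~\ref{prop-Gor}, since $X$ is degreewise injective the functor $\Hom_{R}(-, X)$ carries short exact sequences of modules to short exact sequences of complexes, so the long exact homology sequence shows that the class $\class{E}$ of modules $M$ with $\Hom_{R}(M, X)$ acyclic is thick and closed under summands. It is closed under arbitrary direct sums via $\Hom_{R}(\bigoplus M_{\alpha}, X) \cong \prod \Hom_{R}(M_{\alpha}, X)$ together with exactness of products of abelian groups, and it is closed under transfinite extensions: if $M = \colim_{i<\lambda} M_{i}$ is such a filtration with successive cokernels in $\class{E}$, then degreewise injectivity of $X$ makes each restriction $\Hom_{R}(M_{i+1}, X) \to \Hom_{R}(M_{i}, X)$ degreewise surjective, the inverse system is Mittag-Leffler, and $\Hom_{R}(M, X) \cong \invlim \Hom_{R}(M_{i}, X)$ is acyclic. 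Since $A \in \class{E}$ pulls each $N \in S$ into $\class{E}$, every absolutely clean module lands in $\class{E}$, as required. For the exact variant, note that $\Hom_{R}(R, X) \cong X$, so enlarging $A$ to $A \oplus R$ exactly imposes the additional requirement that $X$ itself be exact.

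The remaining assertions follow quickly. By Proposition~\ref{prop-homotopy-cat}, $\gamma$ annihilates every module built from the chosen test module: absolutely clean modules are built from $A$ through the filtration supplied by Proposition~\ref{prop-finite-injective}(e), and projective modules are built from $R$ as summands of free modules, which accounts for the vanishing statements in the exact case. For the Noetherian comparison, Baer's criterion characterizes injective modules as those $N$ with $\Ext^{1}_{R}(R/\ideal{a}, N) = 0$ for every left ideal $\ideal{a}$; when $R$ is left Noetherian every such $R/\ideal{a}$ is finitely generated and hence of type $FP_{\infty}$, so absolutely clean forces injective, while the reverse inclusion is trivial. The two classes coincide, identifying the AC-acyclic model structure with the Inj-acyclic injective model structure and the exact AC-acyclic model structure with the totally acyclic injective model structure of Corollary~\ref{cor-totally-acyclic-model}. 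The main obstacle in the argument is the closure of $\class{E}$ under transfinite extensions: one has to be careful that the Mittag-Leffler property genuinely applies here and that acyclicity of each $\Hom_{R}(M_{i}, X)$ survives passage to the inverse limit, which is why degreewise injectivity of $X$ is essential.
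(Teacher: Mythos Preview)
Your proof is correct and follows exactly the approach the paper outlines in the paragraph preceding the theorem: take $A$ to be the direct sum of the set furnished by Proposition~\ref{prop-finite-injective}(e), apply Theorem~\ref{thm-how to create injective on chain}, and use the closure properties of the class $\class{E}$ to identify the fibrant objects. The paper gives no formal proof beyond that sketch, so your detailed verification---particularly the transfinite-extension closure of $\class{E}$ via degreewise injectivity of $X$---fills in precisely what the paper leaves implicit.
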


Note that for the ring $R$ of
Proposition~\ref{prop-noncoherent-example}, every module is absolutely
clean, so the fibrant objects in the AC-acyclic model structure are
the injective complexes, and every map is a weak equivalence.  

We would like to acknowledge the work of Pinzon~\cite{pinzon}, whose
theorems about absolutely pure modules over left coherent rings led us
to the more general notion of absolutely clean modules, which
of course agree with absolutely pure modules over left coherent
rings.  Note that Pinzon's theorems generalize; for example, the class
of absolutely clean modules is covering for any ring $R$.

When $R$ is left Noetherian, Krause constructs a recollement involving
$K (Inj)$, the stable derived category $S (R)$, and the derived category $D (R)$
in~\cite{krause-stable}.  Half of
this recollement arises because $S (R)$ is a Bousfield localization of
$K (Inj)$.  The other half can also be interpreted in terms of model
structures; this has been done very nicely in~\cite{becker}. Inspired
by Becker's work, the second author has given a general study of when
these recollements happen in~\cite{gillespie-recollement}.

\section{The Gorenstein AC-injective model structure on modules}\label{sec-Gorenstein-inj}

Recall that the usual stable module category can be obtained directly
from a model category structure on modules.  This is explained for
quasi-Frobenius rings in~\cite[Setion~2.2]{hovey-model} and
for Gorenstein rings in~\cite{hovey-cotorsion}.  In this section, we prove that
this can be done for the exact AC-acyclic model structure of
the previous section.

Following the method used for Gorenstein rings in~\cite{hovey-cotorsion}, we
define an $R$-module $M$ to be \textbf{Gorenstein AC-injective} if
$M=Z_{0}X$ for some exact AC-acyclic complex of injectives.  If $R$ is
left Noetherian, the Gorenstein AC-injectives are the usual Gorenstein
injectives; if $R$ is left coherent, the Gorenstein AC-injectives are
the Ding injectives discussed in~\cite{gillespie-ding}.  We would
like to put an abelian model structure on $\rmod$ in which everything
is cofibrant and the fibrant objects coincide with the class
$\class{F}$ of Gorenstein AC-injectives.  This forces us to define
\[
\class{W} = \leftperp{\class{F}}.
\]

We need to understand how $\class{W}$ relates to the class of trivial
objects in the exact $A$-injective model structure.

\begin{lemma}\label{lem-cycles-of-W}
Let $A$ be an $R$-module, and suppose $X$ is a complex with $H_{i}X=0$
for $i<0$ and such that $X_{i}$ is absolutely clean for $i>0$.
Then $X$ is trivial in the exact AC-acyclic model structure if and
only if $Z_{0}X\in \class{W}$.
\end{lemma}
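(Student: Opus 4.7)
The plan is to establish the stronger statement that, under the hypothesis, the cycle functor induces a natural isomorphism $\Ext^{1}_{\ch}(X,F)\cong \Ext^{1}_{R}(Z_0X,Z_0F)$ for every exact AC-acyclic complex of injectives $F$. The lemma follows immediately: $X$ is trivial if and only if $\Ext^{1}_{\ch}(X,F)=0$ for all such $F$, while $Z_0X\in\class{W}$ if and only if $\Ext^{1}_{R}(Z_0X,M)=0$ for all Gorenstein AC-injective $M$, which by definition are exactly the $Z_0F$.

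The first step is to establish the following auxiliary Ext-vanishing: if $I$ is absolutely clean and $M$ is Gorenstein AC-injective, then $\Ext^{1}_{R}(I,M)=0$. Writing $M=Z_0G$ for some exact AC-acyclic complex of injectives $G$, the short exact sequence $0\to M\to G_0\to Z_{-1}G\to 0$ combined with the exactness of $\Hom_{R}(I,G)$ yields surjectivity of $\Hom_{R}(I,G_0)\to \Hom_{R}(I,Z_{-1}G)$, and the associated long exact sequence then forces $\Ext^{1}_{R}(I,M)=0$ since $G_0$ is injective.

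The forward map sends an extension $0\to F\to Z\to X\to 0$ to the degree-zero cycle extension, which is indeed short exact because $H_{-1}F=0$. For surjectivity, given $\eta\colon 0\to Z_0F\to N\to Z_0X\to 0$ in $\rmod$, I would realize $\eta$ as the image of a map $\psi\colon Z_0X\to Z_{-1}F$ under the connecting homomorphism (which is surjective because $\Ext^{1}_{R}(Z_0X,F_0)=0$), lift $\psi$ through the inclusion $Z_{-1}F\hookrightarrow F_{-1}$, and extend along $Z_0X\hookrightarrow X_0$ using the injectivity of $F_{-1}$ to obtain $\phi_0\colon X_0\to F_{-1}$. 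The core of the argument is then a two-sided induction producing a chain map $\phi\colon X\to \Sigma F$ extending $\phi_0$, which classifies the desired chain extension $\tilde\eta$ with $Z_0\tilde\eta=\eta$. In positive degrees, the chain-map equation at degree $n$ prescribes a map $X_n\to Z_{n-1}F$, which we lift to $X_n\to F_n$ by the auxiliary fact applied to the absolutely clean $X_n$ and the Gorenstein AC-injective $Z_nF$. In negative degrees, exactness of $X$ below zero forces the value of $\phi_n$ on $B_nX=Z_nX$ via the chain-map equation, and we extend to all of $X_n$ using injectivity of $F_{n-1}$.

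Injectivity of the cycle map on $\Ext^1$ follows by a symmetric null-homotopy argument: if $Z_0\tilde\eta$ splits, the splitting supplies a map $Z_0X\to F_0$ which extends, by the same two-sided induction, to a full chain null-homotopy of $\phi$, forcing $\tilde\eta$ to split. The main obstacle I anticipate is the careful bookkeeping in the two-sided induction: at each step one must verify that the map scheduled for lifting or extension actually takes values in the appropriate cycle submodule of $F$, which follows from $(d^F)^2=0$ combined with the chain-map identities for the previously constructed $\phi_k$.
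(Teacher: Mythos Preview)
Your proposal is correct and follows essentially the same approach as the paper: both establish the isomorphism $\Ext^{1}_{\ch}(X,F)\cong\Ext^{1}_{R}(Z_{0}X,Z_{0}F)$ for $F$ an exact AC-acyclic complex of injectives by translating to chain maps $X\to\Sigma F$ modulo homotopy (via Lemma~\ref{lemma-homcomplex-basic-lemma}) and then running the same two-sided induction, extending downward via injectivity of the $F_{n}$ and exactness of $X$ below zero, and upward via the absolute cleanness of $X_{n}$ together with AC-acyclicity of $F$. Your auxiliary $\Ext^{1}$-vanishing statement is exactly a repackaging of what AC-acyclicity of $F$ provides; the paper invokes AC-acyclicity directly rather than isolating it, but the content is identical.
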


\begin{proof}
Suppose $M$ is Gorenstein AC-injective, so that $M=Z_{0}Y$ for some
exact AC-acyclic complex of injectives $Y$.  We claim that there is an
isomorphism
\[
\Ext^{1} (X,Y) \xrightarrow{} \Ext^{1} (Z_{0}X,M);
\]
this isomorphism would prove the lemma.  Indeed, because $Y$ is a
complex of injectives, Lemma~\ref{lemma-homcomplex-basic-lemma} gives
us an isomorphism
\[
\Ext^{1} (X,Y) \xrightarrow{} \ch (X, \Sigma Y)/\sim,
\]
where $\sim$ denotes chain homotopy.  A chain map $\phi \mathcolon
X\xrightarrow{}\Sigma Y$ induces a map $Z_{0}X\xrightarrow{}Z_{-1}Y$.
A chain homotopy between $\phi $ and $0$ gives us maps $D_{n}\mathcolon
X_{n}\xrightarrow{}Y_{n}$ with $-dD_{n}+D_{n-1}d=\phi _{n}$.  In
particular $\phi _{0}=-dD_{0}$ on $Z_{0}X$.  Thus, there is a natural map
\[
\tau \mathcolon \ch (X, \Sigma Y)/\sim \xrightarrow{} \Hom (Z_{0}X, Z_{-1}Y)/\Hom
(Z_{0}X,Y_{0})\cong \Ext^{1} (Z_{0}X, M).
\]

To see that $\tau $ is surjective, suppose we have a map $f\mathcolon
Z_{0}X\xrightarrow{}Z_{-1}Y$.  Because $Y_{-1}$ is injective, there is
a map $f_{0}\mathcolon X_{0}\xrightarrow{}Y_{-1}$ extending $f$.  We
therefore get an induced map $Z_{-1}X=B_{-1}X\xrightarrow{}B_{2}Y$
using the fact that $H_{-1}X=0$.  Because $Y_{-2}$ is injective, this
extends to a map $f_{-1}\mathcolon X_{-1}\xrightarrow{}Y_{-2}$.
Continuing in this fashion we can define maps $f_{n}$ for all $n\leq
0$.  To define $f_{n}$ in positive degrees, we use the fact that $Y$
is exact AC-acyclic.  Indeed, our given map $f$ induces the composite
\[
X_{1} \xrightarrow{d} Z_{0}X \xrightarrow{f} Z_{-1}Y.
\]
Since $X_{1}$ is absolutely clean and $Y$ is AC-acyclic, there is a map
$f_{1}\mathcolon X_{1}\xrightarrow{}Y_{0}$ such that $df_{1}$ is this
composite.  This means that $f_{1}d\mathcolon
X_{2}\xrightarrow{}Y_{0}$ lands in $Z_{0}Y$, and so, because $X_{2}$
is absolutely clean and $Y$ is AC-acyclic, there is a map
$f_{2}\mathcolon X_{2}\xrightarrow{}Y_{1}$.  Continuing in this
fashion, we complete $f$ to a chain map as required.

To see that $\tau $ is injective, suppose we have a chain map $\phi
\mathcolon X\xrightarrow{}\Sigma Y$ such that $Z_{0}\phi$ factors
through $Y_{0}$ via a map $g_{0}\mathcolon Z_{0}X\xrightarrow{}Y_{0}$.
We need to construct a chain homotopy $D$ between $\phi$ and $0$, and
we do this following the same plan.  As before, we extend $Z_{0}\phi$
to a map $D_{0}\mathcolon X_{0}\xrightarrow{}Y_{0}$.  Then
$dD_{0}-\phi_{0}$ induces a map $g_{-1}\mathcolon
Z_{-1}X=B_{-1}X\xrightarrow{}Z_{-1}Y$, which we extend to a map
$D_{-1}\mathcolon X_{-1}\xrightarrow{}Y_{-1}$, so that
$dD_{0}-D_{-1}d=\phi_{0}$.  We continue in this fashion to construct
$D_{n}$ for all negative $n$.  We then define $D_{n}$ for positive $n$
using the fact that $X_{i}$ is absolutely clean for $i>0$ and $Y$ is
AC-acyclic, as we did above.
\end{proof}

As a scholium of the above proof, we note the following proposition.

\begin{proposition}\label{prop-Gor-inj-induced}
For any ring $R$, suppose $M$ and $N$ are Gorenstein AC-injective
$R$-modules, with $M=Z_{0}X$ and $N=Z_{0}Y$ for $X$ and $Y$ exact
AC-acyclic complexes of injectives.  Given a map $f\mathcolon
M\xrightarrow{}N$, there is a chain map $\phi \mathcolon
X\xrightarrow{}Y$ with $Z_{0}\phi =f$.
\end{proposition}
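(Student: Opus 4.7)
The plan is to construct the lift $\phi$ component by component, essentially adapting the surjectivity argument for $\tau$ in the proof of Lemma~\ref{lem-cycles-of-W} with an index shift so that $\phi$ targets $Y$ rather than $\Sigma Y$. I would first define $\phi_n$ for all $n \leq 0$ using the injectivity of the $Y_n$, and then define $\phi_n$ for all $n \geq 1$ using the AC-acyclicity of $Y$ together with the fact that each $X_n$, being injective, is absolutely clean by Proposition~\ref{prop-finite-injective}(c).

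For the downward induction, I would begin by extending the composite $Z_0 X \xrightarrow{f} Z_0 Y \hookrightarrow Y_0$ along the inclusion $Z_0 X \hookrightarrow X_0$, using injectivity of $Y_0$, to obtain $\phi_0$. Note that $\phi_0|_{Z_0 X} = f$ lands in $Z_0 Y$. In general, suppose $\phi_n$ has been defined for some $n \leq 0$ with $\phi_n|_{Z_n X}$ landing in $Z_n Y$. Then $d\phi_n \mathcolon X_n \to Y_{n-1}$ vanishes on $Z_n X$, so it factors through $X_n / Z_n X \cong B_{n-1} X = Z_{n-1} X$ (using exactness of $X$) and produces a map $Z_{n-1} X \to Z_{n-1} Y$, since applying $d$ gives $d^2 \phi_n = 0$. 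Extending this along $Z_{n-1} X \hookrightarrow X_{n-1}$ using injectivity of $Y_{n-1}$ defines $\phi_{n-1}$, whose restriction to $Z_{n-1} X$ lands in $Z_{n-1} Y$, so the induction continues.

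For the upward induction, suppose $\phi_k$ has been defined for all $k < n$, some $n \geq 1$, with $d\phi_k = \phi_{k-1} d$ whenever both are defined. The composite $\phi_{n-1} d \mathcolon X_n \to Y_{n-1}$ lands in $Z_{n-1} Y$, since $d (\phi_{n-1} d) = \phi_{n-2} d^2 = 0$, so it is a cycle in $\Hom_R(X_n, Y)$. Because $X_n$ is absolutely clean and $Y$ is AC-acyclic, $\Hom_R(X_n, Y)$ is exact, so this cycle is a boundary, yielding $\phi_n \mathcolon X_n \to Y_n$ with $d\phi_n = \phi_{n-1} d$.

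By construction $\phi_0|_{Z_0 X} = f$, so $Z_0 \phi = f$ as required. I do not anticipate any substantive obstacle: the only ingredient beyond what appears in the proof of Lemma~\ref{lem-cycles-of-W} is the observation that an injective module is absolutely clean, which is precisely what makes the positive-degree step go through. This is really why the proposition deserves the label ``scholium'' — the construction is the one already given, merely redeployed.
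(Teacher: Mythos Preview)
Your proposal is correct and is precisely the approach the paper intends: the proposition is stated as a scholium of the surjectivity argument for $\tau$ in Lemma~\ref{lem-cycles-of-W}, and your construction is exactly that argument reindexed from $\Sigma Y$ to $Y$. The only additional input you need beyond the lemma's hypotheses is that each $X_{n}$ is absolutely clean, which follows from injectivity via Proposition~\ref{prop-finite-injective}, just as you note.
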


We now use this proposition to prove the following lemma.

\begin{lemma}\label{lem-Gor-inj-retracts}
For any ring $R$, the collection of Gorenstein AC-injective
$R$-modules is closed under retracts.
\end{lemma}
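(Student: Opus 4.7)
Plan: Let $M$ be a retract of a Gorenstein AC-injective module $N$. Write $N = M \oplus K$ via the idempotent $e = ip \colon N \to N$, where $p \colon N \to M$ and $i \colon M \to N$ satisfy $pi = 1_M$. Choose an exact AC-acyclic complex of injectives $Y$ with $Z_0 Y = N$. The strategy is to explicitly construct an exact AC-acyclic complex of injectives $X$ with $Z_0 X = M$, thereby witnessing that $M$ is Gorenstein AC-injective.

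The first step is to apply Proposition \ref{prop-Gor-inj-induced}, with both source and target taken to be $N$, to lift $e$ to a chain map $E \colon Y \to Y$ with $Z_0 E = e$. Although $E$ need not be idempotent as a chain map, the identity $Z_0 E^2 = e^2 = e = Z_0 E$, combined with the uniqueness up to chain homotopy of such lifts (extracted from the proof of Lemma \ref{lem-cycles-of-W}), shows that $E^2$ is chain homotopic to $E$.

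The next step is to extract from $E$ an exact AC-acyclic complex of injectives $X$ with $Z_0 X = M$. The natural candidate is the algebraic mapping telescope of the sequence $Y \xrightarrow{E} Y \xrightarrow{E} \cdots$, since at the level of $0$-cycles this computes $\colim(N \xrightarrow{e} N \xrightarrow{e} \cdots) = \im(e) = M$, the idempotence of $e$ forcing the colimit to stabilize. Exactness is inherited from exactness of $Y$ via exactness of filtered colimits, while AC-acyclicity passes through because $\Hom(A,-)$ interacts well with the telescope when $A$ is of type $FP_{\infty}$, and absolutely clean modules are transfinite extensions of such.

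The main obstacle is ensuring each term $X_n$ is injective. Over a non-Noetherian ring, filtered colimits and countable direct sums of injectives need not be injective, so the naive telescope can fall short. Resolving this requires either upgrading $E$ to a genuine idempotent—which succeeds when the chain homotopy between $E$ and $E^2$ can be assembled into a convergent iteration—or replacing the telescope by a finer construction that produces an honestly degreewise-injective complex representing the same homotopy type. This injectivity issue is the technical heart of the lemma.
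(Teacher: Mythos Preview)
Your approach has a genuine gap that you yourself flag but do not close: the mapping telescope of $Y\xrightarrow{E}Y\xrightarrow{E}\cdots$ is built degreewise from countable direct sums of injectives, and over a non-Noetherian ring these need not be injective. You suggest two ways out---promoting $E$ to an honest idempotent via some convergent iteration, or replacing the telescope by a ``finer construction''---but neither is carried out, and in fact neither is straightforward. Splitting a homotopy idempotent in general requires either countable coproducts or countable products in the ambient triangulated category; here coproducts fail (that is exactly your injectivity obstruction), so a coproduct-based telescope cannot be made to work without further hypotheses on $R$.

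The paper sidesteps the issue entirely by working on the product side, where injectivity is automatic. It first observes that exact AC-acyclic complexes of injectives are closed under products, hence so are Gorenstein AC-injectives. The Eilenberg swindle (with products, not sums) then shows that if $N$ is a retract of a Gorenstein AC-injective, there is a Gorenstein AC-injective $M$ with $N\oplus M\cong M$. Now lift the split monomorphism $f\colon M\hookrightarrow M\oplus N\cong M$ to a chain map $\phi\colon X\to X$ via Proposition~\ref{prop-Gor-inj-induced}; since $\phi$ need not be injective, adjoin the contractible disks $\prod_{i\neq 0}D^{i+1}X_i$ to force a monomorphism $\psi$, and take its cokernel. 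This cokernel is a single exact AC-acyclic complex of injectives whose $0$-cycles are $N$, and no colimit is ever formed. If you want to salvage your approach, the natural fix is to dualize: use a product-based ``microscope'' rather than a sum-based telescope to split the homotopy idempotent, since products of injectives are always injective.
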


The proof we give here is inspired by the proof for the corresponding
fact for Gorenstein injectives over a Noetherian ring given
in~\cite{holm}.

\begin{proof}
For any ring $R$, the exact AC-acyclic complexes of injectives are
closed under products, and so Gorenstein AC-injectives are as well.  The
Eilenberg swindle then implies that if $N$ is a retract of a
Gorenstein AC-injective, there is a Gorenstein AC-injective module $M$ with
$N\oplus M\cong M$.  Indeed, if $N\oplus N'$ is Gorenstein AC-injective,
we take $M=\prod _{i=1}^{\infty} (N\oplus N')$, so that
\[
M \cong N \oplus \prod_{i=1}^{\infty} (N'\oplus N) \cong
N\oplus M.
\]

Now choose an exact AC-acyclic complex of injectives $X$ with
$M=Z_{0}X$. By Proposition~\ref{prop-Gor-inj-induced}, the monomorphism
\[
f\mathcolon M\xrightarrow{} M\oplus N\cong M
\]
is induced by a chain map $\phi \mathcolon X\xrightarrow{}X$, so that
$Z_{0}\phi =f$.  We would like to take the cokernel of $\phi$ and
claim that it is an exact AC-acyclic complex of injectives with zero
cycles equal to $N$.  Unfortunately, $\phi$ need not be a
monomorphism.  So instead we use
\[
\psi \mathcolon X \xrightarrow{} X'=X\oplus \prod _{i\neq 0}
D^{i+1}X_{i},
\]
where $\psi$ has components $\phi$ and the maps $\rho_{i}\mathcolon
X\xrightarrow{}D^{i+1}X_{i}$ that are $d$ in degree $i+1$ and the
identity in degree $i$.  Note that $\psi$ is clearly a monomorphism in
every degree except possibly $0$, but also in degree $0$ because
$Z_{0}\phi$ is a monomorphism.  Furthermore, $Z_{0}\psi =f$.

We then get a short exact sequence
\[
0 \xrightarrow{} X \xrightarrow{\psi} X' \xrightarrow{} Y
\xrightarrow{} 0
\]
of complexes, necessarily degreewise split since $X$ is a complex of
injectives.  Both $X$ and $X'$ are exact AC-acyclic complexes of
injectives (since the disks we add are contractible), and so $Y$ is as
well.  Furthermore, because $X$ is exact, the induced sequence
\[
0 \xrightarrow{} Z_{0}X \xrightarrow{Z_{0}\psi } Z_{0}X'
\xrightarrow{} Z_{0}Y \xrightarrow{} 0
\]
is still exact, and so $Z_{0}Y\cong N$ and so $N$ is Gorenstein
AC-injective.
\end{proof}

\begin{lemma}\label{lem-spheres}
A module $M$ over a ring $R$ is in $\class{W}$ if and only if $S^{0}M$
is trivial in the exact AC-acyclic model structure.
\end{lemma}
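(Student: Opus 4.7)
The plan is to recognize this lemma as essentially an immediate application of Lemma~\ref{lem-cycles-of-W} to the sphere complex $X = S^0 M$. Recall $\class{W} = \leftperp{\class{F}}$ where $\class{F}$ consists of Gorenstein AC-injective modules, while the trivial objects of the exact AC-acyclic model structure are characterized (through the cotorsion pair) as $\leftperp{\class{F}_{ch}}$ with $\class{F}_{ch}$ the exact AC-acyclic complexes of injectives. So the bridge between ``$M \in \class{W}$'' and ``$S^0 M$ is trivial'' must pass through the identification $Z_0 X \in \class{F}$ whenever $X \in \class{F}_{ch}$, and this is exactly what Lemma~\ref{lem-cycles-of-W} encodes.

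First, I would verify that $S^0 M$ satisfies the hypotheses of Lemma~\ref{lem-cycles-of-W}. Its homology is concentrated in degree $0$, so $H_i(S^0 M) = 0$ for $i < 0$. In positive degrees, $(S^0 M)_i = 0$, and the zero module is trivially absolutely clean (for instance because absolutely clean modules are closed under products, including the empty product, or simply because $\Ext^1(N, 0) = 0$ for all $N$). Second, observe that $Z_0(S^0 M) = M$ by inspection.

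Then Lemma~\ref{lem-cycles-of-W} applies directly and gives: $S^0 M$ is trivial in the exact AC-acyclic model structure if and only if $Z_0(S^0 M) = M$ lies in $\class{W}$. This is precisely the claim.

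There is no real obstacle; the content is already in Lemma~\ref{lem-cycles-of-W}. The only thing worth pointing out is that the lemma is stated for the exact AC-acyclic model structure (i.e.\ $A$ taken to be a set of generating absolutely clean modules together with $R$, so that the fibrant objects are exact and AC-acyclic), which matches the hypothesis under which $\class{F}$ was defined, so the definitions line up without adjustment.
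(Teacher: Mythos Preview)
Your proof is correct. You invoke Lemma~\ref{lem-cycles-of-W} with $X = S^0 M$, after checking its hypotheses, and read off the conclusion. This works.

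The paper takes a more direct route: rather than appealing to Lemma~\ref{lem-cycles-of-W}, it does a short computation using the elementary Lemma~\ref{lemma-homcomplex-basic-lemma} to show $\Ext^1(S^0 M, X) \cong \Ext^1(M, Z_1 X)$ for any exact complex of injectives $X$. Since the $Z_1 X$ range over all Gorenstein AC-injectives as $X$ ranges over exact AC-acyclic complexes of injectives, this immediately gives the equivalence. Your approach is cleaner bookkeeping, quoting the general isomorphism already established in Lemma~\ref{lem-cycles-of-W}; the paper's approach avoids the overhead of that lemma's proof (which involves constructing chain maps and homotopies degree by degree using AC-acyclicity) and instead uses a one-line $\homcomplex$ identification that needs nothing beyond exactness and injectivity of the entries. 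In effect, Lemma~\ref{lem-cycles-of-W} is a stronger hammer than this particular nail requires, but since it is already available there is no harm in using it.
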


\begin{proof}
A calculation using Lemma~\ref{lemma-homcomplex-basic-lemma} shows
that
\[
\Ext^{1} (S^{0}M, X) = \Ext^{1} (M, Z_{1}X).
\]
for any exact complex of injectives $X$.
\end{proof}

\begin{theorem}\label{thm-Gor-module}
For any ring $R$, there is an abelian model
structure on $\rmod$, the \textbf{Gorenstein AC-injective model structure},
in which every object is cofibrant and the fibrant objects are the
Gorenstein AC-injective modules.
\end{theorem}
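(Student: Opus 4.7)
The plan is to invoke Proposition~\ref{prop-how to create an injective model structure} with $\class{F}$ the class of Gorenstein AC-injective modules and $\class{W}:=\leftperp{\class{F}}$. This asks us to verify three things: that $(\class{W},\class{F})$ is a complete cotorsion pair, that $\class{W}$ is thick, and that every injective module lies in $\class{W}$.

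The last two conditions are handled via Lemma~\ref{lem-spheres}, which identifies $\class{W}$ with the modules $M$ such that $S^{0}M$ is trivial in the exact AC-acyclic model structure on $\ch$. If $I$ is injective, then $S^{0}I$ is an injective chain complex, hence trivially fibrant (and so trivial) in that model structure by Theorem~\ref{thm-FP-injective}; consequently $I\in \class{W}$. Thickness of $\class{W}$ is inherited from thickness of the trivial class in the chain model structure (which holds in any abelian model structure), using that $S^{0}$ is exact and preserves retracts to transport both the two-out-of-three property and retract closure via Lemma~\ref{lem-spheres}.

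The substance is the cotorsion pair condition. The equality $\class{W}=\leftperp{\class{F}}$ is by definition; what must be produced are approximations, and since $\rmod$ has enough projectives and injectives it suffices to establish the enough-injectives half. Given an $R$-module $N$, I would apply the enough-injectives portion of the chain-level cotorsion pair underlying Theorem~\ref{thm-FP-injective} to the complex $S^{0}N$, producing a short exact sequence
\[
0 \to S^{0}N \to Y \to F \to 0
\]
in $\ch$ with $Y$ an exact AC-acyclic complex of injectives and $F$ trivial. A direct long exact sequence computation, using that $S^{0}N$ is concentrated in degree zero and $Y$ is exact, shows $H_{\ast}F$ is concentrated in degree one, and in particular $H_{i}F=0$ for $i<0$. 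Since $(S^{0}N)_{-1}=0$, the functor $Z_{0}$ is exact on this sequence, yielding
\[
0 \to N \to Z_{0}Y \to Z_{0}F \to 0,
\]
with $Z_{0}Y$ Gorenstein AC-injective by construction. Lemma~\ref{lem-cycles-of-W} then forces $Z_{0}F \in \class{W}$, giving the required enough-injectives approximation. The reverse containment $\rightperp{\class{W}}\subseteq\class{F}$ follows by the standard retract argument: any $N\in\rightperp{\class{W}}$ splits this short exact sequence, exhibiting $N$ as a direct summand of the Gorenstein AC-injective module $Z_{0}Y$, and Lemma~\ref{lem-Gor-inj-retracts} concludes.

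The main obstacle is the last structural hypothesis of Lemma~\ref{lem-cycles-of-W} on $F$, namely that $F_{i}$ be absolutely clean for $i>0$. I plan to arrange this by building the enough-injectives approximation through the cogenerating set for the chain cotorsion pair (whose sphere part $S^{n}(A)$ already contributes absolutely clean modules in the relevant degrees), and if necessary absorbing the remaining positive-degree contributions into contractible disks of absolutely clean modules, which lie in the trivial class of $\ch$ and do not alter $Z_{0}F$ or the resulting module-level short exact sequence. Everything beyond this structural control is either a formal consequence of Lemma~\ref{lem-spheres} or is encoded in Lemmas~\ref{lem-cycles-of-W}, \ref{lem-Gor-inj-retracts} and Proposition~\ref{prop-Gor-inj-induced}, which are set up precisely for this transfer from chain complexes to modules.
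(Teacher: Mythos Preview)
Your approach is exactly the paper's: invoke Proposition~\ref{prop-how to create an injective model structure}, use Lemma~\ref{lem-spheres} for thickness and the containment of injectives, push the chain-level approximation of $S^{0}N$ down via $Z_{0}$, and close with the retract argument and Lemma~\ref{lem-Gor-inj-retracts}.

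The one place you go astray is your ``main obstacle.'' There is none. In your short exact sequence $0\to S^{0}N\to Y\to F\to 0$, the complex $S^{0}N$ vanishes in every degree $i\neq 0$, so $F_{i}\cong Y_{i}$ for all $i\neq 0$. Since $Y$ is fibrant in the exact AC-acyclic model structure it is a complex of injectives, hence $F_{i}$ is injective (in particular absolutely clean) for every $i\neq 0$. The hypothesis of Lemma~\ref{lem-cycles-of-W} is therefore automatic, and no tinkering with the cogenerating set or absorbing disks is needed. This is precisely how the paper argues: it simply observes that $F_{i}$ is injective for $i\neq 0$ and that $H_{i}F=0$ for $i\neq 1$, and applies Lemma~\ref{lem-cycles-of-W} directly.
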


This model structure generalizes the Gorenstein injective model
structure defined in~\cite{hovey-cotorsion} and also its generalization
in~\cite{gillespie-ding}.  Note, though, that we do not have an
explict cogenerating set for the cotorsion pair
$(\class{W},\class{F})$ when $R$ is not Gorenstein, though we will
prove below that a cogenerating set does exist.  When $R$ is
Gorenstein, the syzygies of the indecomposable injective modules
cogenerate.

One aspect of this model structure is that the class of Gorenstein
AC-injectives is special pre-enveloping for any ring $R$.

\begin{proof}
As above, we take $\class{F}$ to be the Gorenstein AC-injective
modules, and define $\class{W}=\leftperp{\class{F}}$.  Then
Lemma~\ref{lem-spheres} shows that $\class{W}$ is thick and contains
the injective modules.  Now for any module $M$, we have a short exact
sequence
\[
0 \xrightarrow{} S^{0}M \xrightarrow{}X \xrightarrow{}Y \xrightarrow{} 0
\]
in which $X$ is an exact AC-acyclic complex of injectives and $Y$ is
trivial in the exact AC-acyclic model structure.  Applying
$Z_{0}=\Hom_{\ch} (S^{0}R,-)$, we get an exact sequence
\[
0 \xrightarrow{} M \xrightarrow{} Z_{0}X \xrightarrow{} Z_{0}Y
\xrightarrow{} \Ext^{1}_{\ch} (S^{0}R, S^{0}M) =0.
\]
Of course $Z_{0}X$ is Gorenstein AC-injective by definition, but $Z_{0}Y$
is in $\class{W}$ as well by Lemma~\ref{lem-cycles-of-W}, since
$Y_{i}$ is injective for all $i\neq 0$ and $H_{i}Y=0$ for all $i\neq
1$.  So the purported cotorsion pair $(\class{W},\class{F})$ has
enough injectives, and hence enough projectives as well if it is a
cotorsion pair.

We can then use this to show that $\class{F}=\rightperp{\class{W}}$,
so that $(\class{W},\class{F})$ is in fact a cotorsion pair.
Indeed, suppose $M\in \rightperp{\class{W}}$.  Find a short exact
sequence
\[
0 \xrightarrow{} M \xrightarrow{} J \xrightarrow{} N \xrightarrow{} 0
\]
where $J$ is Gorenstein AC-injective and $N\in \class{W}$.  By
assumption, this must split, and so $M$ is a retract of $J$ and hence
is Gorenstein AC-injective by Lemma~\ref{lem-Gor-inj-retracts}.
Proposition~\ref{prop-how to create an injective model structure} now
completes the proof.
\end{proof}

It is enlightening to examine the homotopy category of the Gorenstein
AC-injective model structure, but to do so we need a lemma.

\begin{lemma}\label{lem-hereditary}
For any ring $R$, the cotorsion pair $(\cat{W},\cat{F})$, where $\cat{F}$ is the
Gorenstein AC-injective modules, is hereditary, so that $\Ext^{n}
(W,F)=0$ for all $n>0$, $W\in \cat{W}$, and $F\in \cat{F}$.  Hence the
collection of Gorenstein AC-injective modules is coresolving \ulp
closed under cokernels of monomorphisms\urp .
\end{lemma}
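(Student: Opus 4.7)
The plan is to prove the $\Ext$-vanishing by induction and then deduce the coresolving property as a formal consequence of the cotorsion pair structure established in Theorem~\ref{thm-Gor-module}.

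The key observation driving the first step is that Gorenstein AC-injectives are closed under the natural ``cosyzygy'' construction. Indeed, if $F\in \cat{F}$ is written as $F=Z_{0}X$ for an exact AC-acyclic complex of injectives $X$, then the canonical short exact sequence
\[
0\xrightarrow{}F\xrightarrow{}X_{0}\xrightarrow{}Z_{-1}X\xrightarrow{}0
\]
exhibits $F$ as the kernel of a surjection from an injective module onto another Gorenstein AC-injective module, namely $Z_{-1}X=Z_{0}(\Sigma^{-1}X)$, since $\Sigma^{-1}X$ is itself an exact AC-acyclic complex of injectives. Applying $\Hom(W,-)$ for $W\in\cat{W}$ and using that $\Ext^{k}(W,X_{0})=0$ for $k\geq 1$ because $X_{0}$ is injective, the long exact sequence of $\Ext$ yields the dimension-shift isomorphism $\Ext^{n+1}(W,F)\cong \Ext^{n}(W,Z_{-1}X)$. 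With the base case $\Ext^{1}(W,F)=0$ supplied by the cotorsion pair $(\cat{W},\cat{F})$ of Theorem~\ref{thm-Gor-module}, induction on $n$ gives $\Ext^{n}(W,F)=0$ for all $n\geq 1$, all $W\in \cat{W}$, and all $F\in \cat{F}$.

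For the second assertion, suppose $0\to F'\to F\to F''\to 0$ is a short exact sequence with $F',F\in\cat{F}$. For any $W\in\cat{W}$, the associated long exact sequence of $\Ext$-groups contains the segment
\[
\Ext^{1}(W,F)\xrightarrow{}\Ext^{1}(W,F'')\xrightarrow{}\Ext^{2}(W,F'),
\]
in which the outer groups vanish by the first part. Hence $\Ext^{1}(W,F'')=0$ for every $W\in\cat{W}$, so $F''\in\rightperp{\cat{W}}=\cat{F}$, again by Theorem~\ref{thm-Gor-module}. Together with the fact, already noted in the proof of Theorem~\ref{thm-Gor-module}, that $\cat{F}$ contains the injectives and is closed under extensions (it is the right half of a cotorsion pair), this shows $\cat{F}$ is coresolving.

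I do not foresee any real obstacle: the whole argument is a formal consequence of the complete cotorsion pair structure of Theorem~\ref{thm-Gor-module} once one observes that the defining class of exact AC-acyclic complexes of injectives is manifestly closed under the shift $\Sigma^{-1}$.
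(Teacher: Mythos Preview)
Your argument is correct and is essentially the paper's own proof: both use the short exact sequences $0\to Z_{i}X\to X_{i}\to Z_{i-1}X\to 0$ coming from an exact AC-acyclic complex of injectives to dimension-shift $\Ext^{n}(W,F)$ down to an $\Ext^{1}$, and then derive the coresolving property from the resulting $\Ext^{2}$-vanishing exactly as you do. (One tiny slip: with the paper's convention $(\Sigma X)_{n}=X_{n-1}$ one has $Z_{-1}X=Z_{0}(\Sigma X)$ rather than $Z_{0}(\Sigma^{-1}X)$, but this is immaterial since shifts in either direction preserve the class of exact AC-acyclic complexes of injectives.)
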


\begin{proof}
Suppose $F$ is Gorenstein AC-injective, so that $F=Z_{0}X$, where $X$
is an exact AC-acyclic complex of injectives.  We have short exact
sequences
\[
0 \xrightarrow{} Z_{i}X \xrightarrow{} X_{i} \xrightarrow{}
Z_{i-1}X\xrightarrow{}0
\]
for all $i$, and each $Z_{i}X$ is also Gorenstein AC-injective.  A
simple computation then shows that
\[
\Ext^{n} (M, F) = \Ext^{1} (M, Z_{-n+1}X),
\]
so $(\cat{W}, \cat{F})$ is hereditary.  It follows that $\cat{F}$ is
coresolving, just as in the proof of the third part of
Proposition~\ref{prop-finite-injective}.
\end{proof}

\begin{theorem}\label{thm-homotopy-Gorenstein}
For any ring $R$, the homotopy category of the Gorenstein AC-injective
model structure is the quotient category of the category of Gorenstein
AC-injective modules obtained by identifying two maps when their
difference factors through an injective module.
\end{theorem}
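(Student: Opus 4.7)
The plan is to derive the theorem from the standard identification of the homotopy category of a model category as the category of cofibrant-fibrant objects modulo the homotopy relation, combined with the explicit description of that homotopy relation available in any abelian model structure. By Theorem \ref{thm-Gor-module}, every $R$-module is cofibrant in the Gorenstein AC-injective model structure and the fibrant objects are precisely the Gorenstein AC-injective modules; moreover, since Theorem \ref{thm-Gor-module} is obtained by invoking Proposition \ref{prop-how to create an injective model structure}, which gives $\class{I} = \class{F}\cap\class{W}$ as part of its conclusion, the trivially fibrant objects are exactly the injective modules. Quillen's theorem then identifies $\Ho(\rmod)$ with the full subcategory of Gorenstein AC-injective modules with morphisms given by homotopy classes of maps.

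What remains is to describe this homotopy relation concretely. In any abelian model structure specified by classes $(\class{Q},\class{R},\class{W})$ as in Theorem \ref{them-Hoveys correspondence}, it is a standard fact that two parallel maps $f, g$ between cofibrant-fibrant objects are homotopic if and only if $g - f$ factors through an object of $\class{Q}\cap\class{R}\cap\class{W}$. In our situation $\class{Q}$ is all of $\rmod$ and $\class{R}\cap\class{W}=\class{I}$, so this intersection is exactly the class of injective modules, and the theorem follows immediately.

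The main obstacle in the plan is justifying the homotopy characterization used in the second paragraph; everything else is bookkeeping. To establish it, one constructs explicit path (or cylinder) objects from the cotorsion pairs defining the model structure. Concretely, for a fibrant $N$, one uses completeness of $(\class{W},\class{F})$ to obtain a short exact sequence $0 \to N \to I \to W \to 0$ with $I$ injective (hence trivially fibrant) and $W\in\class{W}$, and then builds a path object for $N$ of the form $N\oplus I$ with the evident structure maps. Unwinding the definition of a right homotopy $H\mathcolon M \to N\oplus I$ between $f$ and $g$ shows that $f\sim g$ precisely when $g-f$ admits a factorization through the injective $I$; conversely any such factorization yields a right homotopy. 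This is the only nontrivial input, and with it in hand the theorem is an immediate consequence of the identifications made above.
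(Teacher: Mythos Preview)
Your overall strategy matches the paper's: identify the homotopy category with bifibrant objects modulo homotopy, then show that the homotopy relation is ``difference factors through an injective.'' The easy direction (a factorization through an injective gives a homotopy) is fine. The gap is in your justification of the converse.

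The short exact sequence you invoke does not exist as stated. Completeness of $(\class{W},\class{F})$ gives, for any $N$, a sequence $0\to N\to F\to W\to 0$ with $F\in\class{F}$ and $W\in\class{W}$; it does \emph{not} force $F$ to be injective. Conversely, embedding a fibrant $N$ into an injective $I$ gives $0\to N\to I\to I/N\to 0$, but there is no reason for $I/N$ to lie in $\class{W}$. Either way, the ``evident'' path object structure on $N\oplus I$ is not evident: one needs a fibration $N\oplus I\to N\times N$ whose composite with your weak equivalence is the diagonal, and without a retraction of $N\hookrightarrow I$ there is no obvious candidate. So the construction you sketch does not go through as written.

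The paper handles this by factoring the diagonal $N\to N\times N$ as a trivial cofibration $i\mathcolon N\to N'$ followed by a fibration $p$, and then arguing that $\cok i$ is injective. The key input you are missing is Lemma~\ref{lem-hereditary}: since $N$ and $N'$ are Gorenstein AC-injective and this class is coresolving, $\cok i\in\class{F}$; combined with $\cok i\in\class{W}$ this gives $\cok i\in\class{F}\cap\class{W}=\class{I}$. Then $f-g=d\circ p\circ H$ factors through $\cok i$. An alternative repair of your argument, avoiding Lemma~\ref{lem-hereditary}, is to first show $f-g$ factors through some $W\in\class{W}$ (via any very good path object, whose trivial cofibration has cokernel in $\class{W}$), then embed $W$ into a fibrant $F$ using completeness of $(\class{W},\class{F})$; thickness of $\class{W}$ forces $F\in\class{F}\cap\class{W}=\class{I}$, and since $\Ext^1(F/W,N)=0$ the map $W\to N$ extends over the injective $F$.
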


\begin{proof}
Since the Gorenstein AC-injectives are the cofibrant and fibrant
objects, the homotopy category is the quotient category of the
category of Gorenstein injectives by the homotopy relation.  Since
injective modules are trivial, any map that factors through an
injective module gets sent to $0$ in the homotopy category.  So if
$f-g$ factors through an injective, then $f=g$ in the homotopy
category, and so $f$ and $g$ are homotopic.  Conversely, suppose
$f,g\mathcolon M\xrightarrow{}N$ are homotopic, where $M$ and $N$ are
Gorenstein AC-injective.  Then there is a homotopy $H\mathcolon
M\xrightarrow{}N'$ between them for any path object $N'$ of $N$.  We
can obtain a path object by factoring the diagonal map
$N\xrightarrow{}N\times N$ into a trivial cofibration
$N\xrightarrow{i}N'$ followed by a fibration $N'\xrightarrow{p}N\times
N$.  In particular, since $p$ is a fibration and the Gorenstein
AC-injectives are closed under extensions (like the right half of any
cotorsion pair), we see that $N'$ is Gorenstein AC-injective.  The
cokernel of $i$ is therefore also Gorenstein AC-injective since the
Gorenstein AC-injectives are coresolving.  But this cokernel is in $\cat{W}$
as well, so the cokernel of $i$ is in fact an injective module.  If we
let $d\mathcolon N\times N\xrightarrow{}N$ denote the difference map,
we then have
\[
f-g = d\circ (f,g) = d\circ p\circ H.
\]
But $d\circ p\circ i=0$, so $d\circ p$ factors through the injective
module $\cok i$, and so $f-g$ does as well.
\end{proof}

The Gorenstein AC-injective model structure and the exact AC-acyclic
model structure are Quillen equivalent.

\begin{theorem}\label{thm-totally-Gorenstein}
For any ring $R$, the functor $S^{0}\mathcolon \rmod
\xrightarrow{}\ch$ is a Quillen equivalence from the Gorenstein
AC-injective model structure to the exact AC-acyclic model structure.
\end{theorem}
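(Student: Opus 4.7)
The plan is to verify that $(S^{0}, Z_{0})$ is a Quillen adjunction with right adjoint $Z_{0} = \Hom_{\ch}(S^{0}R,-)$, and then apply the criterion that such an adjunction is a Quillen equivalence if and only if the derived unit is a weak equivalence on every cofibrant object and $Z_{0}$ reflects weak equivalences between fibrant objects. Since every object is cofibrant in both model structures, (trivial) cofibrations are exactly monomorphisms with (trivial) cokernel; $S^{0}$ is exact and Lemma~\ref{lem-spheres} says $S^{0}M$ is trivial in the exact AC-acyclic structure iff $M\in\class{W}$, so $S^{0}$ preserves both. For the derived unit, given $M$, I take a fibrant replacement short exact sequence $0 \xrightarrow{} S^{0}M \xrightarrow{} X \xrightarrow{} Y \xrightarrow{} 0$ with $X$ exact AC-acyclic and $Y$ trivial. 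Using $\Ext^{1}_{\ch}(S^{0}R, S^{0}M) \cong \Ext^{1}_{R}(R,M)=0$, applying $Z_{0}$ yields an exact sequence $0 \xrightarrow{} M \xrightarrow{} Z_{0}X \xrightarrow{} Z_{0}Y \xrightarrow{} 0$. The long exact sequence gives $H_{n}Y \cong H_{n-1}(S^{0}M) = 0$ for $n\leq 0$, and $Y_{n}=X_{n}$ is injective for $n\neq 0$, so Lemma~\ref{lem-cycles-of-W} gives $Z_{0}Y\in\class{W}$. Thus the derived unit $M \xrightarrow{} Z_{0}X$ is a trivial cofibration in the Gorenstein AC-injective model structure, hence a weak equivalence.

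The main obstacle is showing that $Z_{0}$ reflects weak equivalences between fibrant objects, and for this I will establish the following key lemma: if $\phi\mathcolon X \xrightarrow{} Y$ is a chain map between exact AC-acyclic complexes of injectives and $Z_{0}\phi$ factors through an injective module $I$ as $Z_{0}X \xrightarrow{\alpha} I \xrightarrow{\iota} Z_{0}Y$, then $\phi$ is chain homotopic to zero. The null-homotopy $\{h_{n}\}$ is built starting with $h_{0}$: because $I$ is injective (hence absolutely clean) and $Y$ is AC-acyclic, $\iota$ lifts along $d\mathcolon Y_{1} \xrightarrow{} Z_{0}Y$ to some $\beta\mathcolon I \xrightarrow{} Y_{1}$; extending $\beta\alpha$ over $Z_{0}X\hookrightarrow X_{0}$ via injectivity of $Y_{1}$ yields $h_{0}$ with $(\phi_{0}-dh_{0})|_{Z_{0}X}=0$. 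Inductively for $n<0$, the difference $\phi_{n}-dh_{n}$ vanishes on cycles, factors through boundaries (equal to cycles by exactness of $X$), and extends to $X_{n-1}$ via injectivity of $Y_{n}$ to define $h_{n-1}$. Inductively for $n>0$, the map $\phi_{n}-h_{n-1}d$ lands in $Z_{n}Y$ (by a direct calculation using $d^{2}=0$ and the previous equations) and lifts to $h_{n}\mathcolon X_{n} \xrightarrow{} Y_{n+1}$ along $d\mathcolon Y_{n+1} \xrightarrow{} Z_{n}Y$ because $X_{n}$ is injective (hence absolutely clean) and $Y$ is AC-acyclic, exactly as in the upward argument used in the proof of Lemma~\ref{lem-cycles-of-W}.

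With the key lemma in hand, suppose $\phi\mathcolon X \xrightarrow{} Y$ is a chain map between fibrant complexes with $Z_{0}\phi$ a weak equivalence in the Gorenstein AC-injective structure. Since $Z_{0}X$ and $Z_{0}Y$ are bifibrant, $Z_{0}\phi$ is a homotopy equivalence, so by Theorem~\ref{thm-homotopy-Gorenstein} there is $\bar\psi\mathcolon Z_{0}Y \xrightarrow{} Z_{0}X$ with $\bar\psi\circ Z_{0}\phi - 1_{Z_{0}X}$ and $Z_{0}\phi\circ\bar\psi - 1_{Z_{0}Y}$ each factoring through an injective module. Proposition~\ref{prop-Gor-inj-induced} lifts $\bar\psi$ to a chain map $\psi\mathcolon Y \xrightarrow{} X$, and applying the key lemma to $\psi\phi - 1_{X}$ and $\phi\psi - 1_{Y}$ shows each is chain homotopic to zero. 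Hence $\phi$ is a chain homotopy equivalence, which is identified with the identity in the homotopy category of the exact AC-acyclic model structure and is therefore a weak equivalence, completing the proof of the Quillen equivalence.
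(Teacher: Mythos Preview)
Your proof is correct, and the setup (Quillen adjunction via Lemma~\ref{lem-spheres}) and the derived-unit argument match the paper's proof almost verbatim. The genuine difference is in how you show that $Z_{0}$ reflects weak equivalences between fibrant objects. The paper reduces to the case of a cofibration $f\mathcolon X\xrightarrow{}Y$ between fibrant objects, takes its cokernel $C$ (which is again an exact AC-acyclic complex of injectives since the sequence is degreewise split), observes that exactness of $X$ makes $0\to Z_{0}X\to Z_{0}Y\to Z_{0}C\to 0$ short exact, and then applies Lemma~\ref{lem-cycles-of-W} directly to $C$: since $Z_{0}C\in\class{W}$, the complex $C$ is trivial and $f$ is a weak equivalence. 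You instead bypass the cokernel entirely, using Theorem~\ref{thm-homotopy-Gorenstein} and Proposition~\ref{prop-Gor-inj-induced} to produce a chain-level homotopy inverse and then building explicit null-homotopies via your key lemma. Your key lemma is essentially a repackaging of the surjectivity/injectivity argument inside the proof of Lemma~\ref{lem-cycles-of-W}, so the underlying mechanism is the same; the paper's route is shorter because it invokes that lemma as a black box rather than reproving its content, while your route has the virtue of being self-contained at the chain level and not requiring the reduction to cofibrations.
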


\begin{proof}
Since $S^{0}$ is exact, it preserves cofibrations.
Lemma~\ref{lem-spheres} shows that it preserves trivial cofibrations,
so it is a left Quillen functor with right adjoint $Z_{0}$.  We will
show that $Z_{0}$ reflects weak equivalences between fibrant objects
and that the map $M\xrightarrow{}Z_{0}RS^{0}M$ is a weak equivalence,
where $R$ denotes fibrant replacement in the exact AC-acyclic model
structure.  In view of Corollary~1.3.16
of~\cite{hovey-model}, this will complete the proof.

We first show that $Z_{0}$ reflects weak equivalences between fibrant
objects.  As with any right Quillen functor, it suffices to show that
if $f\mathcolon X\xrightarrow{}Y$ is a cofibration of fibrant objects
such that $Z_{0}f$ is a weak equivalence, then $f$ is a weak
equivalence.  So we are given a short exact sequence
\[
0 \xrightarrow{} X \xrightarrow{f} Y \xrightarrow{} C \xrightarrow{}0
\]
with $X$ and $Y$ exact AC-acyclic complexes of injectives.  This
sequence is necessarily degreewise split, from which it follows that
$C$ is also an exact AC-acyclic complex of injectives. The functor
$Z_{0}$ is left exact but not exact, but since $X$ is exact we do get
a short exact sequence
\[
0 \xrightarrow{} Z_{0}X \xrightarrow{} Z_{0}Y \xrightarrow{} Z_{0}C
\xrightarrow{} 0.
\]
Since $Z_{0}f$ is a weak equivalence, $Z_{0}C$ is in $\class{W}$.
Lemma~\ref{lem-cycles-of-W} then implies that $C$ is trivial in the
exact AC-acyclic model structure, and so $f$ is a weak equivalence.

Now take any module $M$, and let $RS^{0}M$ be a fibrant replacement
for $S^{0}M$, so that we have a short exact sequence
\[
0\xrightarrow{} S^{0}M \xrightarrow{} RS^{0}M \xrightarrow{} Y
\xrightarrow{} 0
\]
with $Y$ trivial in the exact AC-acyclic model category.  Applying
the functor $Z_{0}=\Hom_{\ch} (S^{0}R,-)$, we get an exact sequence
\[
0 \xrightarrow{} M \xrightarrow{} Z_{0}RS^{0}M \xrightarrow{} Z_{0}Y
\xrightarrow{} \Ext^{1}_{\ch} (S^{0}R, S^{0}M) = 0.
\]
Furthermore, $Y_{i}$ is injective for all $i\neq 0$ and $H_{i}Y=0$ for
all $i\neq 1$, so Lemma~\ref{lem-cycles-of-W} implies that $Z_{0}Y\in
\class{W}$.  Hence $M\xrightarrow{}Z_{0}RS^{0}M$ is a weak
equivalence.
\end{proof}

In view of Theorem~\ref{thm-totally-Gorenstein} , the functor
\[
\gamma : \rmod \xrightarrow{}\Ho \rmod
\]
to the homotopy category of the Gorenstein AC-injective model
structure is an exact functor to a triangulated category that
preserves coproducts and sends all absolutely clean and all
projective modules to $0$.

In fact, it is also initial in a sense.

\begin{proposition}\label{prop-initial}
The homotopy category of the Gorenstein AC-injective model structure
is initial among all triangulated categories with an exact functor
from $\rmod$ that preserves coproducts and sends all elements of
$\cat{W}$ to zero.
\end{proposition}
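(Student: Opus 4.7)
The plan is to exploit the fact that $\Ho \rmod$ is (equivalent to) the localization of $\rmod$ at the class of weak equivalences of the Gorenstein AC-injective model structure. Thus, given any triangulated category $\cat{T}$ and any exact coproduct-preserving functor $\delta \mathcolon \rmod \to \cat{T}$ that sends all elements of $\cat{W}$ to zero, it suffices to show that $\delta$ inverts weak equivalences; the universal property of localization then produces a functor $\overline{\delta}\mathcolon \Ho \rmod \to \cat{T}$ with $\overline{\delta} \circ \gamma = \delta$, unique up to natural isomorphism, and one verifies that $\overline{\delta}$ inherits exactness and coproduct-preservation from $\delta$.

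The key step, which is also the main obstacle, is showing that $\delta$ inverts weak equivalences. First, $\delta$ is additive, since any coproduct-preserving functor between additive categories is. Next, we unpack the (trivial) cofibrations and fibrations using Proposition~\ref{prop-how to create an injective model structure}: every object in the Gorenstein AC-injective model structure is cofibrant, so the trivially cofibrant objects are precisely $\cat{W}$, while the trivially fibrant objects are $\cat{F} \cap \cat{W}$, which is exactly the class of injective modules (and these are contained in $\cat{W}$, as recorded in the proof of Theorem~\ref{thm-Gor-module}). Hence a trivial cofibration is a monomorphism $A \hookrightarrow B$ with cokernel $W \in \cat{W}$; applying the exact functor $\delta$ to $0 \to A \to B \to W \to 0$ yields a distinguished triangle whose third vertex $\delta(W)$ is zero, forcing $\delta(A) \to \delta(B)$ to be an isomorphism. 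A trivial fibration is an epimorphism with injective (hence $\cat{W}$-valued) kernel, and the analogous argument applies. Since every weak equivalence factors as a trivial cofibration followed by a trivial fibration, $\delta$ inverts every weak equivalence.

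Granted this, the universal property of the localization produces the desired $\overline{\delta}$. Exactness of $\overline{\delta}$ follows because every distinguished triangle in $\Ho \rmod$ is isomorphic, as in the proof of Proposition~\ref{prop-homotopy-cat}, to one of the form $\gamma(A) \to \gamma(B) \to \gamma(C) \to \Sigma \gamma(A)$ coming from a short exact sequence $0 \to A \to B \to C \to 0$ in $\rmod$; applying $\overline{\delta}$ yields $\delta(A) \to \delta(B) \to \delta(C) \to \Sigma \delta(A)$, which is distinguished by exactness of $\delta$. Coproducts in $\Ho \rmod$ are, up to isomorphism, of the form $\bigoplus \gamma(M_i) \cong \gamma(\bigoplus M_i)$, and $\delta$ preserves coproducts, so $\overline{\delta}$ does as well.
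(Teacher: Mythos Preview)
Your argument is correct and follows essentially the same route as the paper's proof: show that $\delta$ inverts trivial cofibrations and trivial fibrations (each fits in a short exact sequence whose third term lies in $\cat{W}$, hence yields a triangle with a zero vertex), then use the factorization of any weak equivalence as a trivial cofibration followed by a trivial fibration, and conclude via the universal property of the homotopy category as a localization. The paper stops at ``$F$ extends uniquely to the homotopy category'' and does not spell out that the induced functor is exact and coproduct-preserving; your additional paragraph addressing those points is a welcome elaboration, though you should note that the factorization through the localization is unique on the nose, not merely up to natural isomorphism.
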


\begin{proof}
Suppose we have an exact functor $F\mathcolon \rmod
\xrightarrow{}\cat{C}$ that sends all elements of $\cat{W}$ to $0$.
Then $F$ sends all trivial cofibrations and fibrations to
isomorphisms, since these are each part of exact sequences where the
other term is in $\cat{W}$, so map to exact triangles where one term
is $0$.  Since every weak equivalence is a composition of a trivial
cofibration followed by a trivial fibration, $F$ sends all weak
equivalences to isomorphisms.  Hence $F$ extends uniquely to the
homotopy cateory.
\end{proof}

To truly understand the Gorenstein AC-injective model structure, then,
we need to know what $\cat{W}$ is.  All we know in general is that
$\cat{W}$ contains all absolutely clean and all projective
modules, and that $\cat{W}$ is a thick subcategory closed under direct
limits (as the kernel of any $F$ as in Proposition~\ref{prop-initial}
must be).  So we would like to know, for example, that $\cat{W}$ is
the smallest thick subcategory containing all absolutely clean
and projective modules and closed under direct limits.  But we do not
know this.

We can at least prove that $(\cat{W},\cat{F})$ is cogenerated by a
set.

\begin{proposition}\label{prop-cogenerated}
For any ring $R$, the cotorsion pair $(\cat{W}, \cat{F})$, where
$\cat{F}$ is the class of Gorenstein AC-injectives, is cogenerated by
a set.  Thus the Gorenstein AC-injective model structure is
cofibrantly generated.
\end{proposition}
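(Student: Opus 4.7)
The plan is to exhibit a set $S \subseteq \cat{W}$ with $\rightperp{S} = \cat{F}$; once this is done, \cite[Theorem~2.4]{hovey-cotorsion} applied to $S$ identifies $(\cat{W}, \cat{F})$ as the complete cotorsion pair cogenerated by $S$ and promotes the model structure of Theorem~\ref{thm-Gor-module} to a cofibrantly generated one. Our $S$ will consist of representatives of the isomorphism classes of modules in $\cat{W}$ of cardinality at most a fixed cardinal $\kappa$, to be determined.

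The crucial input is a cardinality estimate for the special pre-envelope built in the proof of Theorem~\ref{thm-Gor-module}. Given a module $M$, that argument produced a short exact sequence $0 \to M \to Z_{0}X \to Z_{0}Y \to 0$ with $Z_{0}X \in \cat{F}$ and $Z_{0}Y \in \cat{W}$, where $X$ is a fibrant replacement of $S^{0}M$ in the exact AC-acyclic model structure on $\ch$. By Theorem~\ref{thm-FP-injective} that structure is cofibrantly generated, and an explicit set of generating (trivial) cofibrations can be read off from Theorem~\ref{thm-how to create injective on chain}; their domains and codomains have cardinality bounded by a fixed $\kappa_{0}$ depending only on $|R|$ and the bounded set of absolutely clean modules supplied by Proposition~\ref{prop-finite-injective}(e). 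Running the small object argument then yields $X$, and hence $Z_{0}Y$, of cardinality bounded by some $\kappa = \kappa(|M|, \kappa_{0})$.

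Armed with this bound, we show that $\cat{W}$ is deconstructible: for a sufficiently large $\kappa$, every $W \in \cat{W}$ is a transfinite extension $W = \colim_{\alpha < \lambda} W_{\alpha}$ with each $W_{\alpha} \in \cat{W}$, each successor quotient $W_{\alpha+1}/W_{\alpha}$ in $\cat{W}$, and $|W_{\alpha+1}/W_{\alpha}| \leq \kappa$. The filtration is built by a Hill-lemma-style iteration: given $W_{\alpha}$ and an element $x \in W \setminus W_{\alpha}$, the cardinality-bounded pre-envelope of the previous step lets us enlarge $W_{\alpha}$ to a submodule $W_{\alpha+1} \subseteq W$ containing $x$ for which $W_{\alpha+1}/W_{\alpha} \in \cat{W}$ has cardinality at most $\kappa$; thickness of $\cat{W}$ then keeps $W_{\alpha+1}$ in $\cat{W}$, and closure of $\cat{W}$ under directed colimits handles limit ordinals.

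To finish, let $S$ be a set of isomorphism-class representatives of modules in $\cat{W}$ of cardinality at most $\kappa$. Eklof's lemma says that for any $N$, the class of modules $A$ with $\Ext^{1}(A, N) = 0$ is closed under transfinite extensions, so if $N \in \rightperp{S}$ then $N$ is right orthogonal to every $W$ produced by our filtration; this yields $\rightperp{S} \subseteq \rightperp{\cat{W}} = \cat{F}$, and the reverse inclusion is immediate from $S \subseteq \cat{W}$. The main obstacle is the Hill-lemma step itself: at each successor ordinal we must simultaneously engulf a prescribed element, keep the cumulative submodule in $\cat{W}$, and control the cardinality of the successor quotient. It is this coordinated cardinality-tracked construction, rather than any later formal manipulation, that carries the argument.
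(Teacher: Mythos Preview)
Your outline has a real gap precisely at the step you flag as the ``main obstacle''. A cardinality bound on the special $\cat{F}$-preenvelope $0\to M\to Z_0X\to Z_0Y\to 0$ controls the size of an object \emph{receiving} a map from $M$; it tells you nothing about small submodules of a given $W\in\cat{W}$ whose quotient stays in $\cat{W}$. In the Hill-lemma step you need, for $W_\alpha\subseteq W$ in $\cat{W}$ and $x\in W\setminus W_\alpha$, to produce $W_{\alpha+1}\subseteq W$ with $x\in W_{\alpha+1}$ and $W_{\alpha+1}/W_\alpha\in\cat{W}$ of bounded size. Nothing in your argument connects the preenvelope of some auxiliary module to a submodule of $W/W_\alpha$; the two constructions live on opposite sides of the problem. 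Direct deconstructibility arguments of this type work when the class is closed under pure submodules and pure quotients (as in Proposition~\ref{prop-transfinite}), or when one already has a cogenerating set and applies the genuine Hill lemma to the resulting filtration --- but the latter is exactly what you are trying to establish, so invoking it here is circular.

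The paper avoids this entirely by passing to complexes. By Lemma~\ref{lem-spheres}, $M\in\cat{W}$ if and only if $S^0M$ lies in the left class $\cat{W}'$ of the exact AC-acyclic cotorsion pair on $\ch$, and \emph{that} pair is already cogenerated by an explicit set (Theorem~\ref{thm-how to create injective on chain}). {\v S}{\v t}ov{\'\i}{\v c}ek's deconstructibility result (Lemma~\ref{lem-stovicek}) then supplies a set $S'\subseteq\cat{W}'$ so that every object of $\cat{W}'$ is an $S'$-filtered object. The punchline is that any subcomplex of $S^0N$ is concentrated in degree $0$, hence is itself a sphere $S^0M_\alpha$; so the $S'$-filtration of $S^0N$ descends verbatim to a filtration of $N$ by modules $M_\alpha$ with $S^0M_\alpha\in S'$. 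Taking $S=\{M:S^0M\in S'\}$ gives the cogenerating set. The transfer through $S^0$ is what replaces the missing Hill-lemma step in your sketch.
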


We use the work of {\v{S}}{\v{t}}ov{\'{\i}}{\v{c}}ek~\cite{stovicek}
on deconstuctibility.  The following lemma is not stated
explicitly in~\cite{stovicek}, so we prove it here, but it is implicit
there.

\begin{lemma}\label{lem-stovicek}
If $(\cat{A},\cat{B})$ is a cotorsion pair in a Grothendieck category
that is cogenerated by a set, then there exists a set $S\subseteq
\cat{A}$ such that every element of $\cat{A}$ is a transfinite
extension of objects of $S$.
\end{lemma}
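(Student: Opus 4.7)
The plan is to invoke the deconstructibility machinery developed by \v{S}\v{t}ov\'{\i}\v{c}ek in~\cite{stovicek}. Let $T$ be a cogenerating set, so $\cat{B} = \rightperp{T}$ and $\cat{A} = \leftperp{(\rightperp{T})}$. Adjoin a generator $G$ of the ambient Grothendieck category to $T$; this preserves the cogenerating property while ensuring that generating subobjects are always available. Choose a regular cardinal $\kappa$ bounding the presentability ranks of all objects in $T \cup \{G\}$, and take $S$ to be a set of representatives for the isomorphism classes of $\kappa$-presentable objects lying in $\cat{A}$. This is genuinely a set, because $\kappa$-presentable objects of a Grothendieck category form a set up to isomorphism.

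To show that every $A \in \cat{A}$ is a transfinite extension of objects of $S$, I would build a continuous chain $0 = A_{0} \subseteq A_{1} \subseteq \dotsb \subseteq A$ by transfinite induction. At a successor stage, given $A_{i} \in \cat{A}$ with $A_{i} \subsetneq A$, pick $a \in A \setminus A_{i}$ and produce a $\kappa$-presentable subobject $A_{i+1}$ containing $A_{i}$ and $a$ with both $A_{i+1}$ and $A_{i+1}/A_{i}$ in $\cat{A}$; in particular $A_{i+1}/A_{i} \in S$. At limit stages take unions, which remain in $\cat{A}$ by Eklof's Lemma (closure of the left half of any cotorsion pair under transfinite extensions). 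Eventually $A_{i} = A$, expressing $A$ as the desired transfinite extension of elements of $S$.

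The main obstacle, and the technical heart of \v{S}\v{t}ov\'{\i}\v{c}ek's paper, is producing the enlargement $A_{i+1}$ at successor stages with both the requisite smallness and the quotient property $A_{i+1}/A_{i} \in \cat{A}$. This is handled via the Hill Lemma of~\cite{stovicek}. The Eklof--Trlifaj small object argument produces a short exact sequence $0 \to K \to M \to A \to 0$ with $K \in \cat{B}$ and $M$ a transfinite extension of $T \cup \{G\}$; since $\Ext^{1}(A,K) = 0$ this splits, realizing $A$ as a retract of $M$. The Hill Lemma then supplies a distributive sublattice $\class{L}$ of subobjects of $M$, each of which is itself a transfinite extension of $T \cup \{G\}$ (hence in $\cat{A}$), with the property that every $\kappa$-small subobject of $M$ lies inside a $\kappa$-small member of $\class{L}$, and such that quotients of $\class{L}$-members by $\class{L}$-members are again transfinite extensions of $T \cup \{G\}$. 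Pulling back along the retraction $M \twoheadrightarrow A$ and intersecting with $A$ yields the $\kappa$-presentable enlargements $A_{i+1}$ needed at each stage. Once this combinatorial input is in place, the transfinite induction proceeds routinely and the lemma follows.
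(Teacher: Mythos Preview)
Your approach and the paper's rest on the same \v{S}\v{t}ov\'{\i}\v{c}ek machinery, but the paper takes a much shorter path. It simply observes that $\cat{A}$ consists of summands of transfinite extensions of the cogenerating set $T$, notes that the class $\cat{D}$ of such transfinite extensions is deconstructible by definition, and then invokes \cite[Proposition~2.9]{stovicek} as a black box: the class of direct summands of objects of a deconstructible class is again deconstructible. That single citation replaces your entire Hill Lemma construction. What you have sketched is essentially the proof of that proposition, so there is no conceptual divergence---only a difference in how much is outsourced to the reference.

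That said, your sketch has two slips worth fixing. First, at successor stages you say $A_{i+1}$ is $\kappa$-presentable, but once $i$ is large $A_{i}$ itself need not be $\kappa$-presentable, so neither can $A_{i+1}\supseteq A_{i}$; what you need (and correctly state a line later) is that the \emph{quotient} $A_{i+1}/A_{i}$ is $\kappa$-presentable. Second, ``pulling back along the retraction $M\twoheadrightarrow A$ and intersecting with $A$'' is too loose: intersecting an arbitrary Hill-good subobject $N\subseteq M$ with the summand $A$ need not yield something in $\cat{A}$, nor need the resulting quotients be small. The actual argument builds the Hill filtration $(M_{i})$ so that it is stable under the idempotent $e\colon M\to M$ with image $A$ (this requires an extra back-and-forth iteration at each stage); only then do the $A_{i}=e(M_{i})$ give a filtration of $A$ whose successive quotients are summands of the small pieces $M_{i+1}/M_{i}$ and hence lie in $S$.
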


Recall that by definition there is a set $T$ of elements of $\cat{A}$
such that every element of $\cat{A}$ is a summand in a transfinite
extension of objects of $T$. This lemma removes the summand condition
at the expense of making $T$ larger.  We should also note that
although this lemma has a similar conclusion as
Proposition~\ref{prop-transfinite}, its hypotheses are quite
different.  

\begin{proof}
Let $T$ be a cogenerating set as above, and let $\cat{D}$ be the class
of all transfinite extensions of objects of $T$.  By definition, this
is a deconstructible class in the sense of~\cite{stovicek}.  Now
$\cat{A}$ is the class of all summands of objects of $\cat{D}$, but
{\v{S}}{\v{t}}ov{\'{\i}}{\v{c}}ek~\cite[Proposition~2.9]{stovicek}
proves that this means that $\cat{A}$ is also deconstructible.  Hence
there is a set $S$ such that $\cat{A}$ is the collection of all
transfinite extensions of elements of $\cat{A}$.
\end{proof}

\begin{proof}[Proof of Proposition~\ref{prop-cogenerated}]
Take a set $S'$ of complexes as in Lemma~\ref{lem-stovicek} for the
cotorsion pair $(\cat{W}',\cat{F}')$, where $\cat{F}'$ is the class of
exact AC-acyclic complexes of injectives. Now let $S$ be the
collection of all modules $M$ such that $S^{0}M\in S'$.  Then
$S\subseteq \cat{W}$ by Lemma~\ref{lem-spheres}, and if $N\in
\cat{W}$, then $S^{0}N\in \cat{W}'$ by the same lemma, so $N$ must be
a transfinite extension of objects of $S'$.  However, each term
$X_{\alpha}$ in this transfinite composition is a subobject of
$S^{0}N$, so must be $S^{0}M_{\alpha}$ for some module $M_{\alpha}$.
It follows that $M$ is a transfinite extension of objects in $S$.
\end{proof}

\section{Projective model structures on $\ch$}\label{sec-proj}

Having constructed model structures based on complexes of injective
modules, it is natural to try to make similar constructions with
complexes of projective modules. The projective case is harder to deal
with because we must still prove that our cotorsion pairs are
cogenerated by a set, and there is no natural choice for such a set
even in the simplest case, as there is no dual version of Baer's
criterion for injectivity.  We follow the standard idea of using all
objects in the left half of the cotorsion pair that are not too big as
a cogenerating set.

The goal of this section, then, is to state the following theorem,
analogous to Theorem~\ref{thm-how to create injective on chain}, and
derive some analogous corollaries.  The proof of Theorem~\ref{thm-how
to create projective on chain} is technical and will be postponed
until the next section.

\begin{theorem}\label{thm-how to create projective on chain}
Given a ring $R$, let $A$ be a given right $R$-module.  Let
$\class{C}$ be the class of $A$-acyclic complexes of projectives; that
is, chain complexes $C$ that are degreewise projective and such that
$A\otimes_{R}C$ is exact.  Then there is a cofibrantly generated
abelian model structure on $\ch$ where every object is fibrant,
$\class{C}$ is the class of cofibrant objects, $\class{W} =
\rightperp{\class{C}}$ is the class of trivial objects, and the
projective complexes $\class{P} = \class{C} \cap \class{W}$ are the
trivially cofibrant objects. Furthermore, $\class{W}$ contains all
contractible complexes.  We call this model structure the
\textbf{$A$-acyclic projective model structure}.  Its homotopy category
is equivalent to the chain hommotopy category of $A$-acyclic complexes
of projectives.
\end{theorem}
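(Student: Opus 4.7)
The plan is to mirror the proof of Theorem~\ref{thm-how to create injective on chain}, using the dual framework of Proposition~\ref{prop-how to create a projective model structure}. The conditions to verify are: (i) $(\class{C}, \class{W})$ is a complete cotorsion pair cogenerated by a set, (ii) $\class{W}$ is thick, and (iii) every projective complex lies in $\class{W}$. The assertions about contractibles and about the homotopy category will then fall out routinely from standard model-category arguments.

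Because every $C \in \class{C}$ is degreewise projective, every short exact sequence ending in $C$ splits degreewise, so $\Ext^{1}_{\ch}(C,-) = \Ext^{1}_{dw}(C,-)$; combined with the fact that $\class{C}$ is closed under suspension, Lemma~\ref{lemma-homcomplex-basic-lemma} yields the characterization
\[
\class{W} = \{W \in \ch : \homcomplex(C,W) \text{ is exact for all } C \in \class{C}\}.
\]
From this, (ii) and (iii) are immediate. For (ii), any short exact sequence $0 \xrightarrow{} W_1 \xrightarrow{} W_2 \xrightarrow{} W_3 \xrightarrow{} 0$ and any $C \in \class{C}$ give a short exact sequence $0 \xrightarrow{} \homcomplex(C, W_1) \xrightarrow{} \homcomplex(C, W_2) \xrightarrow{} \homcomplex(C, W_3) \xrightarrow{} 0$ (each $C_n$ is projective), so the long exact homology sequence forces two-out-of-three. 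For (iii), a contractible complex $W$ makes $\homcomplex(C, W)$ contractible, hence acyclic, so $\class{W}$ contains all contractibles and in particular all projective complexes.

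The main obstacle is (i), since unlike the injective case there is no projective analogue of Baer's criterion. My plan is to establish that $\class{C}$ is deconstructible in the sense of {\v{S}}{\v{t}}ov{\'{\i}}{\v{c}}ek: there exists a cardinal $\kappa$ and a set $S \subseteq \class{C}$ of complexes of cardinality at most $\kappa$ such that every $C \in \class{C}$ is a transfinite extension of elements of $S$. Granting this, \cite[Theorem~2.4]{hovey-cotorsion} produces a complete cotorsion pair $(\leftperp{(\rightperp{S})}, \rightperp{S})$ cogenerated by $S$; this pair coincides with $(\class{C}, \class{W})$ once we check that $\class{C}$ is closed under summands and transfinite extensions. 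Both closures are routine: degreewise projectivity is preserved (projective modules are closed under summands and under transfinite extensions), and $A$-acyclicity is preserved because the functor $A \otimes_R -$ is exact on degreewise split sequences and commutes with direct limits.

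The hard step is the filtration argument witnessing deconstructibility. Given $C \in \class{C}$ and any sub-complex of cardinality less than $\kappa$, I would enlarge it to a sub-complex $C' \leq C$ with $C' \in \class{C}$, cardinality less than $\kappa$, and $C/C' \in \class{C}$; iterating transfinitely then expresses $C$ as the desired transfinite extension of small elements of $\class{C}$. Producing such a $C'$ requires a Kaplansky-style argument to maintain degreewise projectivity of both $C'$ and $C/C'$, together with tracking, for each cycle in $A \otimes_R C'$, a bounding element of $A \otimes_R C$ whose preimage is absorbed into $C'$, so that $A \otimes_R C'$ and $A \otimes_R C/C'$ remain exact. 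This detailed filtration is precisely what the author defers to the next section.
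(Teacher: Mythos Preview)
Your proposal is correct and follows essentially the same route as the paper: apply Proposition~\ref{prop-how to create a projective model structure}, verify thickness of $\class{W}$ and containment of projective complexes by the dual of the injective argument via the $\homcomplex$ characterization, and reduce the completeness of $(\class{C},\class{W})$ to a deconstructibility/filtration statement for $\class{C}$ (the paper's Theorem~\ref{theorem-filtrations for complexes of projectives}) built on Kaplansky's decomposition. If anything, you are slightly more explicit than the paper in checking that $\class{C}$ is closed under summands and transfinite extensions so that $\leftperp{(\rightperp{S})}=\class{C}$, a point the paper leaves implicit.
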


We have the dual dependence on $A$ that we had with the $A$-acyclic
injective model structure.

\begin{lemma}\label{lem-dependence-projective}
Suppose $A$ and $B$ are left $R$-modules and $B$ is built from $A$.
Then the identity functor is a left Quillen functor from the
$A$-acyclic projective model structure to the $B$-acyclic projective
model structure; in fact the $A$-acyclic projective model structure is
a right Bousfield localization of the $B$-acyclic projective model
structure.  In particular, if $A$ is also built from $B$, then the
$A$-acyclic projective model structure coincides with the $B$-acyclic
projective model structure.
\end{lemma}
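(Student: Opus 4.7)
The plan is to mirror the proof of Lemma~\ref{lem-dependence-injective}. Writing $\class{C}_A$ and $\class{W}_A$ for the cofibrant and trivial classes in the $A$-acyclic projective model structure (and similarly for $B$), the key reduction will be to prove the inclusion $\class{C}_A \subseteq \class{C}_B$. Once this is known, everything else is formal: in both model structures every object is fibrant, so fibrations are simply epimorphisms and coincide; and in both structures the trivially cofibrant objects are the projective complexes (independent of $A$ and $B$), so trivial cofibrations also coincide. Thus the inclusion $\class{C}_A \subseteq \class{C}_B$ will immediately promote the identity functor to a left Quillen functor, and in fact realize the $A$-acyclic projective model structure as a right Bousfield localization of the $B$-acyclic one.

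To establish $\class{C}_A \subseteq \class{C}_B$, I will fix a degreewise projective complex $C$ and consider the class
\[
\class{E}_C = \{M : M\otimes_{R} C \text{ is exact}\}.
\]
Since $C$ is degreewise flat, tensoring with $C$ preserves short exact sequences of modules, so the long exact sequence in homology shows $\class{E}_C$ is thick. Because $-\otimes_{R} C$ commutes with colimits and direct limits of exact complexes are exact, $\class{E}_C$ is also closed under direct limits, and a straightforward transfinite induction (using thickness at successor stages and the previous remark at limit stages) shows it is closed under transfinite extensions. Hence if $A\in \class{E}_C$---equivalently $C\in \class{C}_A$---and $B$ is built from $A$, then $B\in \class{E}_C$, so $C\in \class{C}_B$.

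With $\class{C}_A \subseteq \class{C}_B$ in hand, any cofibration in the $A$-acyclic structure is a monomorphism whose cokernel lies in $\class{C}_A \subseteq \class{C}_B$, hence a cofibration in the $B$-acyclic structure; combined with the coincidence of trivial cofibrations and of fibrations noted above, this makes the identity a left Quillen functor and exhibits the $A$-acyclic structure as a right Bousfield localization of the $B$-acyclic one. The final ``In particular'' assertion follows by applying the result twice: if $A$ is also built from $B$, then $\class{C}_A = \class{C}_B$, and with all three classes of cofibrations, trivial cofibrations, and fibrations agreeing, the two model structures coincide. There is no serious obstacle; the only technical point is closure of $\class{E}_C$ under transfinite extensions, which is a routine transfinite induction entirely parallel to the corresponding step in the injective case.
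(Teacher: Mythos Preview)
Your proof is correct and follows essentially the same approach as the paper: fixing a complex of projectives $C$, you show that the class of modules $M$ with $M\otimes_R C$ exact is thick and closed under transfinite extensions, hence contains $B$ whenever it contains $A$, giving $\class{C}_A\subseteq\class{C}_B$; the paper's proof is simply a terser version of exactly this argument. Your additional remarks about fibrations coinciding and the right Bousfield localization are correct elaborations that the paper leaves implicit.
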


\begin{proof}
Given a complex of projectives $C$, the collection of all right
modules $M$ such that $M\otimes_{R}C$ is exact is a thick subcategory
that is closed under transfinite extensions.  Hence if $C$ is
$A$-acyclic it is also $B$-acyclic.  It follows that the identity
functor from the $A$-acyclic projective model structure to the
$B$-acyclic one preserves cofibrations, and they have the same trivial
cofibrations.
\end{proof}

The homotopy category of the $A$-acyclic projective model structure
has similar properties to that of the $A$-acyclic injective model
structure.

\begin{proposition}\label{prop-homotopy-cat-proj}
Let $R$ be a ring and $A$ a right $R$-module.  Consider the composite
functor
\[
\gamma \mathcolon \rmod \xrightarrow{S^{0}} \ch \xrightarrow{} \Ho \ch
\]
from $R$-modules to the homotopy category of the $A$-acyclic projective
model structure.  Then $\gamma$ is an exact product-preserving
functor to the triangulated category $\Ho \ch$.  The kernel of
$\gamma$ consists of all modules $M$ such that $\Hom_R (X, M)$ is
exact for all $A$-acyclic complexes of projectives $X$.
\end{proposition}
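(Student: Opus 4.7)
The plan is to dualize the argument of Proposition~\ref{prop-homotopy-cat}, replacing cofiber sequences by fiber sequences throughout. The shift functor $\Sigma$ on $\ch$ is an equivalence of categories that represents the abstract suspension in this stable model category, so $\Ho \ch$ carries the usual triangulated structure.

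To see that $\gamma$ is exact, I would invoke the fact that in the $A$-acyclic projective model structure every object is fibrant. By Theorem~\ref{them-Hoveys correspondence}, a fibration is any surjection with fibrant kernel, so in this setting \emph{every} epimorphism in $\ch$ is a fibration. Hence, for any short exact sequence $0 \to M' \to M \to M'' \to 0$ in $\rmod$, the image $0 \to S^0 M' \to S^0 M \to S^0 M'' \to 0$ is a fiber sequence, and therefore descends to an exact triangle in $\Ho \ch$. For the product-preservation claim, $S^0$ is a right adjoint (its left adjoint is $X \mapsto \cok d_1^X$), so it preserves products at the level of $\ch$; since every object of $\ch$ is fibrant, products in $\Ho \ch$ are represented by the honest product in $\ch$, and $\gamma$ inherits the preservation.

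For the kernel, $M \in \ker \gamma$ iff $S^0 M$ is trivial, iff $S^0 M \in \class{W} = \rightperp{\class{C}}$, iff $\Ext^1_{\ch}(\Sigma^n X, S^0 M) = 0$ for every $X \in \class{C}$ and every $n \in \Z$ (using that $\class{C}$ is closed under suspensions). Because each such $\Sigma^n X$ is degreewise projective, any short exact sequence with it on the right splits degreewise, so $\Ext^1_{\ch}(\Sigma^n X, S^0 M) = \Ext^1_{dw}(\Sigma^n X, S^0 M)$. By Lemma~\ref{lemma-homcomplex-basic-lemma}, these groups are precisely the homology modules $H_{*}\homcomplex(X, S^0 M)$, whose collective vanishing is the statement that $\homcomplex(X, S^0 M) = \Hom_R(X, M)$ is exact.

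The main obstacle I anticipate is the product-preservation assertion: one has to justify that products in $\Ho \ch$ really are computed in $\ch$. Once one observes that every object is fibrant and that products of weak equivalences between fibrant objects remain weak equivalences in a cofibrantly generated model category (which this is, by Theorem~\ref{thm-how to create projective on chain}), this follows, but it is the one nonformal input; the remaining pieces are straightforward dualizations of the injective case.
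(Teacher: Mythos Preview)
Your proof is correct and follows essentially the same route as the paper's: triangulation via the shift/loop equivalence, exactness because every epimorphism is a fibration, and the kernel description via Lemma~\ref{lemma-homcomplex-basic-lemma} applied to $\Ext^1_{\ch}(X,S^0M)=\Ext^1_{dw}(X,S^0M)$. In fact you go further than the paper does, since the paper's proof does not actually address the product-preservation claim at all; your argument (that $S^0$ is right adjoint to $X\mapsto X_0/B_0X$ and that products of fibrant objects compute homotopy products) fills this gap cleanly.
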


\begin{proof}
The homotopy category $\Ho \ch$ is triangulated because the inverse
shift is an equivalence of categories and is also equivalent to the
loop functor that exists in any model category (see Section~7.1
of~\cite{hovey-model} for a general discussion of when the
homotopy category of a pointed model category is triangulated).

Any epimorphism in $\ch$ is a fibration in the $A$-acyclic
projective model structure, and so short exact
sequences in $\ch$ give rise to exact triangles in $\Ho \ch$. The
functor $S^{0}$ is exact, so we conclude that $\gamma$ is exact.

The kernel of $\gamma$ consists of all modules $M$ such that $S^{0}M$
is trivial in the $A$-acyclic projective model structure.
In view of Lemma~\ref{lemma-homcomplex-basic-lemma}, this is all
modules $M$ such that $\Hom_R (X,M)$ is exact for all $A$-acyclic
complexes of projectives $X$.
\end{proof}

The simplest case of the $A$-acyclic projective model structure is
when $A=0$.

\begin{corollary}\label{cor-complexes-of-projectives}
For any ring $R$, there is a cofibrantly generated abelian model
structure on $\ch$, the \textbf{Proj model structure}, in which the
cofibrant objects are the complexes of projectives and every object is
fibrant.  The trivially cofibrant objects coincide with the projective
complexes, and the homotopy category is the chain homotopy category
of complexes of projectives.
\end{corollary}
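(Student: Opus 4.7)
The plan is to obtain this corollary as the special case $A=0$ of Theorem~\ref{thm-how to create projective on chain}. When $A$ is the zero module, $A \otimes_R C$ is the zero complex for every $C \in \ch$, which is trivially exact, so the class $\class{C}$ of $A$-acyclic complexes of projectives collapses to the class of all degreewise projective chain complexes. Substituting $A=0$ into the theorem then delivers, at once, a cofibrantly generated abelian model structure on $\ch$ whose cofibrant objects are the complexes of projectives, in which every object is fibrant, whose trivial class is $\class{W} = \rightperp{\class{C}}$, and whose trivially cofibrant objects are the projective complexes $\class{C} \cap \class{W}$, as claimed.

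For the homotopy category, the theorem already identifies $\Ho(\ch)$ with the chain homotopy category of $A$-acyclic complexes of projectives; specializing to $A=0$ gives the chain homotopy category of all complexes of projectives. One could also verify this directly: since every object is fibrant, the bifibrant objects are exactly the complexes of projectives, so $\Ho(\ch)$ is the quotient of that subcategory by the model-theoretic left homotopy relation. Using the standard fact that projective chain complexes are precisely the contractible complexes of projectives (equivalently, direct summands of sums of disks $D^{n}(P)$), the left homotopy relation on maps between complexes of projectives coincides with ordinary chain homotopy, matching the desired description.

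The main---indeed, essentially the only---obstacle is internal to Theorem~\ref{thm-how to create projective on chain}, whose proof is postponed to the next section. Conditional on that theorem, the corollary is a routine unpacking: checking that $A=0$ satisfies the theorem's hypotheses vacuously, and matching the model-theoretic terminology (cofibrant, trivially cofibrant, homotopy) with their classical chain-complex counterparts.
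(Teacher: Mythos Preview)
Your proposal is correct and matches the paper's approach exactly: the paper simply introduces this corollary with the sentence ``The simplest case of the $A$-acyclic projective model structure is when $A=0$'' and provides no further proof. Your additional remarks about verifying the homotopy category directly are fine but go beyond what the paper does.
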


At the other extreme, we could take $A$ to be the direct sum of all
finitely generated right $R$-modules.  At first glance this appears to
be more interesting than in the injective case. Here the cofibrant
objects are the complexes of projectives $C$ such that $M\otimes_{R}C$
is exact for all right $R$-modules $M$. So these are the pure exact
complexes of projectives. However, as we show in
Theorem~\ref{thm-pure}, any pure exact complex is trivial in the Proj
model structure. So a pure exact complex of projectives is trivially
cofibrant in the Proj model structure, and hence is a projective
complex.  Thus, this is the model structure in which every map is a
weak equivalence, the fibrations are the epimorphisms, and the
cofibrations are the split monomorphisms with projective cokernel.

We can take $A=R$ to get the following corollary.

\begin{corollary}\label{cor-exact-Proj}
For any ring $R$ there is a cofibrantly generated abelian model
structure on $\ch$, the \textbf{exact projective model structure}, in which
the cofibrant objects are the exact complexes of projectives and every
object is fibrant.  The trivially cofibrant objects coincide with the
projective complexes and the homotopy category is the chain homotopy
category of exact complexes of projectives.  For this model structure,
all injective modules are sent to $0$ by the functor $\gamma$ of
Proposition~\ref{prop-homotopy-cat-proj}.
\end{corollary}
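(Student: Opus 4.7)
The proof will be a direct specialization of Theorem~\ref{thm-how to create projective on chain}, so my plan is essentially an instantiation followed by one short verification.

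First I would take $A=R$, viewed as a right $R$-module, in Theorem~\ref{thm-how to create projective on chain}. Under this choice, the defining condition ``$A\otimes_{R}C$ is exact'' becomes ``$R\otimes_{R}C\cong C$ is exact,'' so the cofibrant class $\class{C}$ is precisely the class of exact complexes of projectives. The theorem then immediately delivers everything in the first two sentences of the corollary: the cofibrantly generated abelian model structure on $\ch$ in which every object is fibrant, the cofibrant objects are the exact complexes of projectives, $\class{W}=\rightperp{\class{C}}$ is the class of trivial objects, and the trivially cofibrant class $\class{C}\cap\class{W}$ coincides with the projective complexes $\class{P}$. The identification of the homotopy category with the chain homotopy category of exact complexes of projectives is also part of the theorem's conclusion.

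It remains to verify the final claim, that $\gamma$ sends every injective $R$-module to $0$. By Proposition~\ref{prop-homotopy-cat-proj} applied with $A=R$, the kernel of $\gamma$ consists of those modules $M$ for which $\Hom_{R}(X,M)=\homcomplex(X,S^{0}M)$ is exact for every exact complex of projectives $X$. If $I$ is injective, then $\Hom_{R}(-,I)$ is an exact contravariant functor on $\rmod$, and applying it degreewise to the exact complex $X$ produces an exact complex of abelian groups; this complex is exactly $\homcomplex(X,S^{0}I)$, since only the degree-zero term of $S^{0}I$ contributes. Hence $\gamma(I)=0$.

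There is no real obstacle here, since all of the substantive content has been packaged into Theorem~\ref{thm-how to create projective on chain} and Proposition~\ref{prop-homotopy-cat-proj}; the corollary requires only the identification $R\otimes_{R}C\cong C$ and the standard fact that $\Hom_{R}(-,I)$ is exact when $I$ is injective.
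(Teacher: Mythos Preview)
Your proposal is correct and follows the same approach as the paper: specialize Theorem~\ref{thm-how to create projective on chain} to $A=R$, then verify the last sentence using the exactness of $\Hom_{R}(-,I)$ for $I$ injective. The paper phrases the last step via Lemma~\ref{lemma-homcomplex-basic-lemma} rather than Proposition~\ref{prop-homotopy-cat-proj}, but since the latter is proved from the former this is the same argument.
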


The last sentence follows from
Lemma~\ref{lemma-homcomplex-basic-lemma}, since $\Hom_R (C,I)$ is
always exact if $C$ is exact and $I$ is injective.

Just as in the injective case, replacing $A$ by $A\oplus R$ changes
the cofibrant objects from $A$-acyclic complexes of projectives to
exact $A$-acyclic complexes of projectives, so we sometimes call the
$A\oplus R$-acyclic projective model structure the \textbf{exact
$A$-acyclic projective model structure}.

Now, if $R$ is right Noetherian, we can take $A$ to be the direct sum
of all indecomposable injectives and $R$ itself.  One might expect
this to give us the totally acyclic projective model structure.  But
an exact complex of projectives $C$ is defined to be totally acyclic
if $\Hom (C,P)$ is exact for all projective modules $P$.  We get
instead exact complexes of projectives such that $\Hom (C,F)$ is exact
for all \emph{flat} modules $F$.  Furthermore, this result extends to
right coherent rings, and has a natural extension to all rings.  

\begin{definition}\label{defn-firmly}
Let $R$ be a ring and $C$ a complex of projective $R$-modules.  We say
that $C$ is \textbf{AC-acyclic} if $I\otimes_{R}C$ is exact for all
absolutely clean right $R$-modules $I$.  We say that $C$ is
\textbf{firmly acyclic} if $\Hom (C,F)$ is exact for all level
left $R$-modules $F$.  If $C$ is itself exact, we will call $C$
\textbf{exact AC-acyclic} or \textbf{exact firmly acyclic} as the
case may be.
\end{definition}

We will then prove the following theorem in the appendix, as
Corollary~\ref{cor-Inj-exact-level}.  

\begin{theorem}\label{thm-Inj-exact-level}
For any ring $R$, a complex of projectives $C$ is AC-acyclic if and
only if it is firmly acyclic.  If level $R$-modules all have
finite projective dimension, these conditions are equivalent to
$\Hom_{R} (C,P)$ being exact for all projective $R$-modules $P$.
\end{theorem}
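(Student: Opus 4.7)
The plan is to leverage the duality between level and absolutely clean modules (Theorem~\ref{thm-duality}) together with the tensor-hom adjunction $(N\otimes_R C)^+ \cong \Hom_R(C, N^+)$, a natural isomorphism of chain complexes of abelian groups valid for any right $R$-module $N$. Since each $C_n$ is projective, $\Hom_R(C,-)$ sends short exact sequences of modules to short exact sequences of complexes. For the implication firmly acyclic $\Rightarrow$ AC-acyclic, I would take any absolutely clean right $R$-module $I$; by Theorem~\ref{thm-duality} the character module $I^+$ is level, so $\Hom_R(C, I^+)$ is exact by hypothesis, and the adjunction then forces $(I\otimes_R C)^+$ to be exact. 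Since $\Q$ is an injective cogenerator of the category of abelian groups, $I\otimes_R C$ is itself exact.

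For the converse AC-acyclic $\Rightarrow$ firmly acyclic, let $F$ be level. Then $F^+$ is absolutely clean by Theorem~\ref{thm-duality}, so $F^+\otimes_R C$ is exact by hypothesis; dualizing via the same adjunction yields that $\Hom_R(C, F^{++})$ is exact. The \textbf{main obstacle} is to promote this exactness from the double-dual $F^{++}$ to $F$ itself. I would use the canonical pure monomorphism $F\hookrightarrow F^{++}$, whose cokernel is a pure quotient of the level module $F^{++}$ and hence again level by Proposition~\ref{prop-level}. Applying $\Hom_R(C,-)$ to the resulting short exact sequence produces a short exact sequence of complexes, and the long exact sequence in homology reduces exactness of $\Hom_R(C, F)$ to that of $\Hom_R(C, F^{++}/F)$, which is only known to be level; naive iteration therefore does not close the loop. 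I expect the appendix to break this impasse by first noting that when $F$ is pure-injective, the map $F\to F^{++}$ is split and hence $\Hom_R(C, F)$ is a direct summand of the already-exact $\Hom_R(C, F^{++})$, and then handling a general level module by a transfinite filtration argument combining the deconstructibility of level modules (Proposition~\ref{prop-level}(e) together with Theorem~\ref{thm-cospiral}) with the closure properties of the class $\{M : \Hom_R(C, M)\text{ exact}\}$ under extensions and products.

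For the final statement, the implication firmly acyclic $\Rightarrow$ $\Hom_R(C, P)$ exact for every projective $P$ is immediate since projectives are level. For the converse, assuming all level modules have finite projective dimension, I would induct on the projective dimension $n$ of a level module $F$. The base case $n=0$ is the hypothesis, and for $n\geq 1$ I would choose a short exact sequence $0\to K\to P\to F\to 0$ with $P$ projective; by the resolving property recorded in Proposition~\ref{prop-level}, $K$ is again level, now of projective dimension $n-1$. Applying $\Hom_R(C,-)$ gives a short exact sequence of complexes, and the long exact sequence in homology combined with the inductive hypothesis and the exactness of $\Hom_R(C, P)$ yields exactness of $\Hom_R(C, F)$.
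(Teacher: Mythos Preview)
Your argument for the easy direction (firmly acyclic $\Rightarrow$ AC-acyclic) and for the final statement about finite projective dimension is correct and matches the paper's approach.

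For the hard direction (AC-acyclic $\Rightarrow$ firmly acyclic), you correctly identify the obstacle, but your proposed resolution does not work. The class $\mathcal{E}=\{M:\Hom_R(C,M)\text{ exact}\}$ is closed under extensions and products, but there is no reason it should be closed under \emph{transfinite} extensions: $\Hom_R(C,-)$ does not commute with the filtered colimits appearing in such a filtration, since the $C_n$ are not finitely presented in general. Moreover, the deconstruction of level modules from Proposition~\ref{prop-level}(e) yields small level modules as building blocks, and small level modules are not pure-injective, so your pure-injective observation gives you nothing about them. The two ingredients you propose do not connect.

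The paper breaks the impasse differently. Iterating the pure embedding $F\hookrightarrow F^{++}$ produces a \emph{coresolution} of $F$ by modules of the form $N^{++}$ with $N$ level; the resulting complex $X$ is pure exact. The key technical input is then Theorem~\ref{thm-pure}: for any complex of projectives $C$ and any pure exact complex $Y$, the complex $\homcomplex(C,Y)$ is exact. This is a substantial result, proved by reducing via Kaplansky's theorem and a filtration argument to the case where $C$ is a bounded above complex of finitely generated free modules, and then constructing a null-homotopy by hand using Lemma~\ref{lem-pure}. Once Theorem~\ref{thm-pure} is available, one splits off $S^0F$ from the coresolution and handles the remaining bounded-above complex of double duals by induction and an inverse-limit argument. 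So the missing idea is not a filtration of level modules, but rather the orthogonality of complexes of projectives to pure exact complexes.
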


If $R$ is right Noetherian, this theorem says that a complex of
projectives $C$ has $I\otimes_{R}C$ exact for all injective right
$R$-modules $I$ if and only if $\Hom (C,F)$ is exact for all flat left
$R$-modules $F$.  This was proved by Murfet and Salarian
in~\cite{murfet-salarian}.  If $R$ is right coherent,
Theorem~\ref{thm-Inj-exact-level} says that a complex of projectives
$C$ has $I\otimes_{R}C$ exact for all absolutely pure right
$R$-modules $I$ if and only if $\Hom_{R} (C,F)$ is exact for all flat
left $R$-modules $F$, extending the Murfet-Salarian theorem to the
coherent case.

For a general ring $R$, as in the injective case, we can take $A$ to
be the direct sum of all the absolutely clean modules of small
cardinality to get the following theorem.

\begin{theorem}\label{thm-AC-projective}
For any ring $R$, there is a cofibrantly generated abelian model
structure on $\ch$, the \textbf{firmly acyclic model structure}, in
which every object is fibrant and the cofibrant objects are the
firmly acyclic complexes of projectives.  There is also a
cofibrantly generated abelian model structure on $\ch$, the
\textbf{exact firmly acyclic model structure}, in which every
object is fibrant and the cofibrant objects are the exact firmly
acyclic complexes of projectives.  The homotopy category of the \ulp
exact\urp firmly acyclic projective model structure is the chain
homotopy category of \ulp exact\urp firmly acyclic complexes of
projectives.  The functor $\gamma$ of
Proposition~\ref{prop-homotopy-cat-proj} sends all injective modules
to $0$ in both the firmly acyclic and exact firmly acyclic
model structures, and also sends level modules to $0$ in the exact
firmly acyclic model structure.
\end{theorem}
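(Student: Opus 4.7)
The plan is to obtain the two model structures by two applications of Theorem~\ref{thm-how to create projective on chain} with suitable generating right $R$-modules. By the right-module analogue of Proposition~\ref{prop-finite-injective}(e), there is a set $S$ of absolutely clean right $R$-modules such that every absolutely clean right $R$-module is a transfinite extension of modules in $S$. Set $A = \bigoplus_{M \in S} M$ and $A' = A \oplus R$; applying Theorem~\ref{thm-how to create projective on chain} to $A$ and $A'$ should give the firmly acyclic and exact firmly acyclic model structures, respectively, once the cofibrant objects are identified. To identify them, I would observe that for any fixed complex of projectives $C$, the class of right $R$-modules $N$ with $N \otimes_R C$ exact is thick, closed under direct sums, and closed under direct limits, hence under transfinite extensions. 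Since every absolutely clean right module is built from summands of $A$, a complex of projectives is $A$-acyclic if and only if it is AC-acyclic; by Theorem~\ref{thm-Inj-exact-level} this is equivalent to firm acyclicity. Replacing $A$ by $A' = A \oplus R$ additionally imposes exactness, since $R \otimes_R C = C$. The homotopy-category descriptions and cofibrant generation are then immediate from Theorem~\ref{thm-how to create projective on chain}.

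For the behavior of $\gamma$, Proposition~\ref{prop-homotopy-cat-proj} reduces matters to showing $\Hom_R(X, M)$ is exact for every cofibrant $X$. For a level module $L$ in the exact firmly acyclic case this is immediate from the definition of firm acyclicity, since every exact firmly acyclic $X$ is in particular firmly acyclic. For an injective $J$ in the exact firmly acyclic case, $X$ is exact and degreewise projective; splitting $X$ into degreewise split short exact sequences $0 \to Z_n X \to X_n \to Z_{n-1} X \to 0$ and applying $\Hom_R(-,J)$, which preserves short exact sequences by injectivity of $J$, then splicing, yields exactness of $\Hom_R(X,J)$.

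The delicate case is an injective $J$ against a firmly acyclic but possibly non-exact $X$. I would exploit the splitting pure embedding $J \hookrightarrow J^{++}$: the embedding is pure and $J$ is pure-injective, so it splits, making $\Hom_R(X, J)$ a direct summand of $\Hom_R(X, J^{++}) \cong (J^+ \otimes_R X)^+$. Thus it suffices to show $J^+ \otimes_R X$ is exact. Since $J$ is absolutely clean (injective implies absolutely pure, which implies absolutely clean), Theorem~\ref{thm-duality} gives that $J^+$ is a level right $R$-module. This is where the main technical obstacle lies: firm acyclicity of $X$ does not obviously force the tensor with an arbitrary level right module to be exact (indeed $R$ itself is level with $R \otimes_R X = X$, which need not be exact). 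I expect the resolution to rely on an additional equivalent formulation of firm acyclicity for complexes of projectives, proved in the appendix alongside Theorem~\ref{thm-Inj-exact-level}, that specifically controls tensor products $K \otimes_R X$ for level right $K$ arising as character modules of absolutely clean left modules; equivalently, a further character-module dualization, $\Hom_R(X, K^+) \cong (K \otimes_R X)^+$, relating the desired vanishing back to $\Hom_R(X, F)$-exactness against absolutely clean test modules $F = K^+$ rather than level ones.
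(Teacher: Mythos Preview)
Your construction of the two model structures is correct and matches the paper's approach exactly: the paper gives no explicit proof of this theorem, merely remarking that one takes $A$ to be the direct sum of a representative set of absolutely clean right modules (adding $R$ for the exact version) and applies Theorem~\ref{thm-how to create projective on chain}. Your thick-subcategory argument identifying the $A$-acyclic complexes with the AC-acyclic (hence firmly acyclic, via Theorem~\ref{thm-Inj-exact-level}) complexes is the right justification.

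On the kernel of $\gamma$, two remarks. First, your argument for level modules works in \emph{both} model structures, not just the exact one: firm acyclicity is literally the statement that $\Hom_R(C,L)$ is exact for every level $L$, so $L$ lies in $\ker\gamma$ regardless of whether $C$ is required to be exact. Second, your treatment of injective modules in the exact case is fine and matches the justification given after Corollary~\ref{cor-exact-Proj}.

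The case you flag as delicate---an injective $J$ against a firmly acyclic but non-exact $C$---is genuinely problematic, and your $J\hookrightarrow J^{++}$ approach does run into exactly the obstacle you describe: $J^{+}$ is level on the right, not absolutely clean, so AC-acyclicity of $C$ gives no control over $J^{+}\otimes_R C$. There is no hidden appendix result rescuing this. Comparing with the injective analog, Theorem~\ref{thm-FP-injective}, where the test class (absolutely clean) goes to $0$ in both model structures and the other class (projective) only in the exact one, the expected dual statement is that level modules go to $0$ in both projective model structures while injective modules go to $0$ only in the exact one. The paper's phrasing appears to have ``injective'' and ``level'' interchanged in the final sentence of the theorem; you have not missed an argument, you have located a misprint.
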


To get a totally acyclic projective model structure, we need to know
slightly more about $R$.

\begin{corollary}\label{cor-totally-proj}
Suppose $R$ is a ring in which all level modules have finite
projective dimension.  Then a complex of projectives $C$ is cofibrant in
the \ulp exact\urp firmly acyclic model structure if and only if
$C$ is \ulp exact\urp totally acyclic.  If $R$ is right coherent, it
suffices for all flat modules to have finite projective dimension.
\end{corollary}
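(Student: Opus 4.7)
The plan is to deduce this corollary directly from Theorem~\ref{thm-AC-projective} together with the content of Theorem~\ref{thm-Inj-exact-level}. By Theorem~\ref{thm-AC-projective}, the cofibrant objects in the firmly acyclic model structure are precisely the firmly acyclic complexes of projectives, and the cofibrant objects in the exact firmly acyclic model structure are precisely the exact firmly acyclic complexes of projectives. So the corollary is equivalent to the claim that, under the hypothesis on level modules, a complex of projectives $C$ is firmly acyclic if and only if $\Hom_R(C,P)$ is exact for all projective $R$-modules $P$ (and the ``exact'' qualifier just transfers from one side to the other of the equivalence).

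That equivalence is exactly the second sentence of Theorem~\ref{thm-Inj-exact-level}. Indeed, Theorem~\ref{thm-Inj-exact-level} tells us that firmly acyclic is the same as AC-acyclic for any ring, and that under the finite projective dimension hypothesis on level modules, these are equivalent to $\Hom_R(C,P)$ being exact for all projective $P$. Combining with the identification of cofibrant objects yields both halves of the corollary.

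For the last sentence, I would invoke Corollary~\ref{cor-level-coherent}, which says that when $R$ is right coherent, every level left $R$-module is flat. Consequently the assumption that all flat modules have finite projective dimension upgrades automatically to the assumption that all level modules have finite projective dimension, so the first part of the corollary applies.

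The only potentially nontrivial input is Theorem~\ref{thm-Inj-exact-level} itself, whose proof is postponed to the appendix, but at this stage we are allowed to quote it. So there is no real obstacle at the level of this corollary; it is an immediate assembly of the characterization of cofibrant objects with the translation from firmly acyclic to the classical $\Hom(C,P)$-exactness condition.
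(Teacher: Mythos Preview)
Your proposal is correct and matches the paper's intent. The paper gives no explicit proof of this corollary, treating it as an immediate consequence of Theorem~\ref{thm-AC-projective} (identifying the cofibrant objects), Theorem~\ref{thm-Inj-exact-level} (the firmly acyclic $\Leftrightarrow$ $\Hom_R(C,P)$-exact equivalence under the hypothesis), and Corollary~\ref{cor-level-coherent} (level $=$ flat in the right coherent case)---precisely the ingredients you assemble.
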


Note that there are many rings in which every flat module has finite
projective dimension.  This is true if $R$ is right Noetherian and has a
dualizing complex~\cite{jorgensen-flat}.

Just as in the injective case, most of these model structures coincide
when $R$ is Gorenstein.

\begin{proposition}\label{prop-Gor-proj}
Suppose $R$ is Gorenstein.  Then the exact projective model structure,
the firmly acyclic model structure, and the exact firmly
acyclic model structure all coincide.
\end{proposition}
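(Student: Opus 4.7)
The plan mirrors the injective analogue, Proposition~\ref{prop-Gor}: show that over a Gorenstein ring the three candidate classes of cofibrant complexes of projectives coincide. Since in each of the model structures in question the cofibrations are the monomorphisms with cofibrant cokernel and $\class{W}$ is determined as $\rightperp{\cat{C}}$, equality of the cofibrant classes will yield equality of the model structures.

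For a complex of projectives $C$, let $\class{E}$ denote the class of $R$-modules $M$ such that $\homcomplex(C,S^{0}M)$ is exact. Because each $C_{n}$ is projective, $\homcomplex(C,S^{0}(-))$ sends short exact sequences of modules to short exact sequences of complexes, so the long exact sequence in homology shows $\class{E}$ is thick and closed under summands. First suppose $C$ is exact. Then $\class{E}$ contains every injective module $I$: a cycle of $\homcomplex(C,S^{0}I)$ in degree $-n$ corresponds to a map $f\colon C_{n}\to I$ vanishing on $Z_{n}C=B_{n}C$, and by injectivity of $I$ the induced map $B_{n-1}C\to I$ extends to $C_{n-1}\to I$, producing a bounding chain. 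By thickness $\class{E}$ contains every module of finite injective dimension. Over a Gorenstein ring, $R$ is two-sided Noetherian and hence coherent, so Corollary~\ref{cor-level-coherent} identifies level modules with flat modules; moreover, the Iwanaga--Gorenstein equivalence of finite projective, injective, and flat dimensions~\cite[Chapter~9]{enochs-jenda-book} forces every flat module to have finite injective dimension. Thus every level module lies in $\class{E}$, so $C$ is firmly acyclic.

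Conversely, suppose $C$ is firmly acyclic, so $\class{E}$ contains every level, hence every projective, module. By thickness $\class{E}$ contains every module of finite projective dimension; and over Gorenstein $R$ every injective module has finite projective dimension, which is precisely the input used in the injective case to prove Proposition~\ref{prop-Gor}. Fix an injective cogenerator $I$. Then $\homcomplex(C,S^{0}I)$ is exact, and a direct computation using exactness of $\Hom(-,I)$ yields $H_{-n}\homcomplex(C,S^{0}I)\cong \Hom(H_{n}C,I)$, so the cogenerator property forces $H_{n}C=0$ for all $n$, i.e., $C$ is exact.

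Combining these two inclusions identifies the three classes of cofibrant complexes, and hence the three model structures. The only delicate input is the fact that flat modules have finite injective dimension over a Gorenstein ring; the dual fact, that injective modules have finite projective dimension, is already invoked in the proof of Proposition~\ref{prop-Gor}, and the thick-subcategory argument used there transfers cleanly to the projective side by way of the long exact sequence in homology.
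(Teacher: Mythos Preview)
Your argument is correct, but it runs along a different track from the paper's. The paper works on the tensor side: it lets $\class{E}$ be the class of \emph{right} modules $M$ with $M\otimes_{R}C$ exact, and uses the AC-acyclic characterization of firmly acyclic complexes (Theorem~\ref{thm-Inj-exact-level}). The two implications then come almost for free: if $C$ is exact, $R\in\class{E}$, so everything of finite projective dimension, hence every injective, lies in $\class{E}$; if $C$ is AC-acyclic, every injective is in $\class{E}$, so everything of finite injective dimension, in particular $R$, lies in $\class{E}$, and $C\cong R\otimes_{R}C$ is exact. You instead stay on the $\Hom$ side and work directly with the defining condition for firm acyclicity, which means you need two extra ingredients the paper avoids: that flat (equivalently, level) modules have finite injective dimension over a Gorenstein ring, and the injective-cogenerator argument to extract exactness of $C$ from exactness of $\Hom_{R}(C,I)$. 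Both facts are standard and your use of them is sound; the trade-off is that you never appeal to Theorem~\ref{thm-Inj-exact-level}, whereas the paper's proof implicitly leans on it to identify firmly acyclic with AC-acyclic.
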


\begin{proof}
The proof is very similar to the injective case.  So consider a
general complex of projectives $X$, and let $\class{E}$ denote the
collection of all right $R$-modules $M$ such that $M\otimes_{R}X$ is
acyclic.  As in the injective case, it is easy to check that
$\class{E}$ is thick.  It follows that, if $X$ is an exact complex of
projectives, then $M\otimes_{R}X$ is exact for all $M$ of finite
projective dimension.  In particular, if $R$ is Gorenstein, then every
injective module has finite projective dimension, so $X$ is
firmly acyclic.  Thus the exact projective and the exact
firmly acyclic model structures coincide.

Similarly, if $X$ is firmly acyclic, it follows that
$M\otimes_{R}X$ is exact for all $M$ of finite injective dimension.
If $R$ is Gorenstein, then $R$ itself has finite injective dimension,
and so $X$ is exact.  Thus the firmly acyclic and exact
firmly acyclic model structures coincide as well.
\end{proof}

The same example as we used in the injective case shows that these
model structures do not coincide in general if $R$ is not Gorenstein.

\begin{proposition}\label{prop-proj-example}
Suppose $R=k[x,y]/ (x^{2},xy,y^{2})$ where $k$ is a field.  Then every
totally acyclic complex of projectives is a projective complex, so the
homotopy category of the firmly acyclic projective model
structure is trivial.  On the other hand, there is an exact complex of
projectives that is not firmly acyclic, so the homotopy category
of the exact projective model structure is not trivial.  Similarly,
there is a firmly acyclic complex of projectives that is not exact.  
\end{proposition}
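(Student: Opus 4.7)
The plan is to mirror the three-part structure of Proposition~\ref{prop-example}, replacing the $\Hom(-,X)$-argument with a $-\otimes_R C$-argument in the first part, and dualizing the explicit injective examples for the last two. Throughout I will use that over this Noetherian ring $R$ (which is Artinian local) every level module is free, hence has projective dimension zero, and that by Matlis every injective is a direct sum of copies of the indecomposable injective $J$; combining Corollary~\ref{cor-totally-proj} with Theorem~\ref{thm-Inj-exact-level}, the notions of totally acyclic, firmly acyclic, and AC-acyclic for complexes of projectives all coincide in this setting, and AC-acyclicity can be tested by $J \otimes_R C$ alone.

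For the first claim I let $C$ be a totally acyclic complex of projectives and consider the class $\cat{T} = \{M : M \otimes_R C \text{ is exact}\}$. Since each $C_n$ is flat, $\cat{T}$ is thick; since tensor commutes with filtered colimits and such colimits preserve exactness of sequences of abelian groups, $\cat{T}$ is closed under direct limits. The class contains $R$ (as $R \otimes_R C \cong C$ is exact) and $J$ (by AC-acyclicity). By Proposition~\ref{prop-example}, every $R$-module is built from $R$ and $J$, so $\cat{T}$ is all of $\rmod$ and $C$ is pure exact. Invoking Theorem~\ref{thm-pure} of the next section, a pure exact complex is trivial in the Proj model structure, which combined with $C$ already being cofibrant forces $C$ to be a projective (contractible) complex. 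Consequently in the firmly acyclic projective model structure the cofibrant and trivially cofibrant objects coincide, so the homotopy category is trivial.

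For parts two and three I will construct ``transposed'' complexes with $C_n = \bigoplus_{i \geq 1} R \cdot e_{n,i}$. For part two, setting $d(e_{n,2i-1}) = x\, e_{n-1,i}$ and $d(e_{n,2i}) = y\, e_{n-1,i}$, one checks $d^2 = 0$ using $x^2 = xy = y^2 = 0$, and a short coordinate computation shows $\ker d = \im d = \ideal{m}C$, so $C$ is exact; but the cycle module $\bigoplus_i \ideal{m}$ is not free, so $C$ is not a projective complex and hence not firmly acyclic by the first part. For part three, setting $d(e_{n,i}) = x\, e_{n-1,2i-1} + y\, e_{n-1,2i}$, the complex $J \otimes_R C$ recovers exactly the exact complex from Proposition~\ref{prop-example}, so $C$ is AC-acyclic (hence firmly acyclic), while $\im d$ in each coordinate pair $(2i-1,2i)$ has the restricted form $(ax, ay)$ for a single scalar $a \in k$ and is strictly smaller than $\ker d = \ideal{m}C$, so $C$ is not exact. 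The main obstacle is the first part, where the passage from ``pure exact'' to ``projective complex'' relies on the forthcoming Theorem~\ref{thm-pure}; a more self-contained dual of the injective splitting argument after Corollary~\ref{cor-complexes-of injectives} is less direct here because $\{M : \Hom_R(C,M) \text{ exact}\}$ is only obviously closed under products, not direct limits, so extending from $R$ and $J$ to every module via the built-from machinery does not transfer cleanly.
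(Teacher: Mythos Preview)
Your proof is correct and follows essentially the same route as the paper's: the same thick-subcategory argument on $\{M : M\otimes_R C \text{ exact}\}$ combined with the fact that every module is built from $R$ and $J$, and the same two explicit complexes (your part-three differential $d(e_i)=xe_{2i-1}+ye_{2i}$ differs from the paper's $d(1_i)=xe_{2i}+ye_{2i-1}$ only by swapping odd and even indices). The paper likewise appeals to the pure-exact $\Rightarrow$ projective-complex step (the discussion after Corollary~\ref{cor-complexes-of-projectives}, which rests on Theorem~\ref{thm-pure}), and additionally remarks that over this ring one can see directly that each $Z_nC$ is flat, hence projective.
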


Note that this ring $R$ has a dualizing module $k$, so in particular a
dualizing complex, so a complex of projectives is exact firmly
acyclic if and only if it is totally acyclic.

\begin{proof}
If $X$ is a totally acyclic complex of projectives, then it is exact
and firmly acyclic, so $I\otimes_{R}X$ is exact for any injective
module $I$.  In particular, both $R\otimes_{R}X$ and $J\otimes_{R}X$
are exact, where $J$ is the unique indecomposable injective.  Since
every $R$-module is built from $R$ and $J$ by
Proposition~\ref{prop-example}, we see that $M\otimes_{R}X$ is exact
for all modules $M$, so that $X$ is a pure exact complex of
projectives. As pointed out after
Corollary~\ref{cor-complexes-of-projectives} this immediately implies
$X$ is projective. But in this case since the projective modules
coincide with the flat modules it is easy to see directly that each
cycle group $Z_nX$ is projective.

To find an exact complex of projectives that is not totally acyclic,
let $X_{n}=\bigoplus_{i=1}^{\infty}R$, and define $d\mathcolon
X_{n}\xrightarrow{}X_{n-1}$ by $d (1_{2i-1})=x_{i}$ and $d
(1_{2i})=y_{i}$.  Then one can easily that
\[
\ker d = \im d = \bigoplus_{i=1}^{\infty } k
\]
generated by the $x_{i}$ and the $y_{i}$, so $X$ is an exact complex
of projectives.  It is not a projective complex since the cycles are
not projective.

To find a firmly acyclic complex $Y$ of projectives that is not exact, we
let $Y_{n}=X_{n}$ with differential $d (1_{i})=x_{2i}+y_{2i-1}$.  Then
$x_{1}$ is a cycle that is not a boundary, so $Y$ is not exact.  But
$J\otimes_{R}Y$ is a countable direct sum of copies of $J$, with $d
(\alpha_{i})=\gamma_{2i-1}$ and $d (\beta_{i})=\gamma_{21}$.  So
$J\otimes_{R}Y$ is exact, and this $Y$ is firmly acyclic. 
\end{proof}

\section{Proof of the projective model structures}

The proof of Theorem~\ref{thm-how to create projective on chain} is
technical.  The basic plan is to find well-behaved small subcomplexes
inside a complex $C$ for which $A\otimes_{R}C$ is exact.  The key fact
that makes this work is the result of Kaplansky~\cite{kaplansky-projective} that
every projective module is a direct sum of countably generated
projective modules.  We first consider how to cope with complexes,
each of whose degrees is a direct sum.

\begin{lemma}[(Covering Lemma)]\label{lemma-covering lemma}
Let $\kappa$ be an infinite cardinal and suppose $X$ is a nonzero
complex in which each $X_n$ has a direct sum decomposition $X_n =
\oplus_{i \in I_n} M_{n,i}$ where $|M_{n,i}| < \kappa$ for all $i \in
I_n$. Then for any choice of subcollections $J_n \subseteq I_n$ (at
least one of which is nonempty), with $|J_n| < \kappa$, we can find a
nonzero subcomplex $S \subseteq X$ with each $S_n = \oplus_{i \in K_n}
M_{n,i}$ for some subcollections $K_n \subseteq I_n$ satisfying $J_n
\subseteq K_n$ and $|K_n| < \kappa$.
\end{lemma}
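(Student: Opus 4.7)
The plan is to construct the index sets $K_n$ by iteratively closing the starting sets $J_n$ under the action of the differential. Set $K_n^{(0)} = J_n$ for every $n$. For each $m \geq 0$, define
\[
K_n^{(m+1)} = K_n^{(m)} \cup \bigcup_{i \in K_{n+1}^{(m)}} \mathrm{supp}_n\bigl(d(M_{n+1,i})\bigr),
\]
where $\mathrm{supp}_n(Y)$ denotes the set of indices $j \in I_n$ such that some element of $Y$ has a nonzero $M_{n,j}$-component in the direct sum decomposition $X_n = \bigoplus_{i \in I_n} M_{n,i}$. Finally set $K_n = \bigcup_{m \geq 0} K_n^{(m)}$ and let $S_n = \bigoplus_{i \in K_n} M_{n,i}$.

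It is immediate that $S$ is a subcomplex of $X$: if $i \in K_n$, then $i \in K_n^{(m)}$ for some $m$, so the support of $d(M_{n,i})$ in $I_{n-1}$ sits inside $K_{n-1}^{(m+1)} \subseteq K_{n-1}$, whence $d(S_n) \subseteq S_{n-1}$. Since at least one $J_n$ is nonempty and $K_n \supseteq J_n$, the subcomplex $S$ is nonzero, and each $S_n$ is by construction the direct sum of the specified $M_{n,i}$'s with $J_n \subseteq K_n$.

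The substantive content is the cardinality bound $|K_n| < \kappa$, which I would prove by induction on $m$ to show $|K_n^{(m)}| < \kappa$ for every $n$ and $m$. The base case is the hypothesis $|J_n| < \kappa$. For the inductive step, fix $i \in K_{n+1}^{(m)}$; since $|M_{n+1,i}| < \kappa$, the image $d(M_{n+1,i})$ has cardinality less than $\kappa$, and because every element of $X_n$ has finite support in the decomposition, $|\mathrm{supp}_n(d(M_{n+1,i}))| \leq \aleph_0 \cdot |M_{n+1,i}| < \kappa$. Thus $|K_n^{(m+1)}|$ is bounded by a union of $|K_n^{(m)}| + |K_{n+1}^{(m)}|$ many sets each of cardinality less than $\kappa$, which remains below $\kappa$. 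Passing to the union over $m$ then gives $|K_n| < \kappa$ as well.

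The main obstacle is the cardinality bookkeeping in the last step: controlling both the $\omega$-step vertical iteration and the fact that each individual addition can range over many indices in $I_{n+1}$. The argument goes through cleanly provided $\kappa$ is a regular uncountable cardinal, so that sums and countable unions of fewer than $\kappa$ cardinals, each less than $\kappa$, stay below $\kappa$. This is exactly how $\kappa$ will be chosen in the applications of this lemma, where we take $\kappa$ large enough relative to $|R|$ to dominate all the cardinals that arise.
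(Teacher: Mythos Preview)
Your proof is correct and takes essentially the same approach as the paper: close the given index sets $J_n$ downward under the differential and bound cardinalities, with the paper building a bounded-above subcomplex from each starting degree separately and then taking the union over all degrees, while you run a single simultaneous $\omega$-iteration across all degrees at once. Your remark that the cardinality bookkeeping needs $\kappa$ regular and uncountable is accurate and is exactly how $\kappa$ is chosen in the paper's applications (Theorem~\ref{theorem-filtrations for complexes of projectives}), even though the lemma as stated omits this hypothesis.
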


\begin{proof}
Suppose we are given such subcollections $J_n \subseteq I_n$. First,
for each $n$, we may build a subcomplex $X^n$ of $X$ as follows: In
degree $n$ the complex will consist of $\oplus_{i \in J_n}
M_{n,i}$. Then noting $d(\oplus_{i \in J_n} M_{n,i}) \subseteq
\oplus_{i \in I_{n-1}} M_{n-1,i}$ we define $L_{n-1} = \{ \, i \in
I_{n-1} \, | \, d(\oplus_{i \in J_n} M_{n,i}) \cap M_{n-1,i} \neq 0 \,
\}$. This essentially ``covers'' $d(\oplus_{i \in J_n} M_{n,i})$ with
summands in the sense that $d(\oplus_{i \in J_n} M_{n,i}) \subseteq
\oplus_{i \in L_{n-1}} M_{n-1,i}$ and yet $|L_{n-1}| < \kappa$ because
$|d(\oplus_{i \in J_n} M_{n,i})| < \kappa$. Now the subcomplex of $X$
we are constructing will consist of $\oplus_{i \in L_{n-1}} M_{n-1,i}$
in degree $n-1$. We continue down in the same way finding $L_{n-2}
\subseteq I_{n-2}$ with $|L_{n-2}| < \kappa$ and with $d(\oplus_{i \in
L_{n-1}} M_{n-1,i}) \subseteq \oplus_{i \in L_{n-2}} M_{n-2,i}$. In
this way we get a subcomplex of $X$: $$X^n = \cdots \xrightarrow{} 0
\xrightarrow{} \oplus_{i \in J_{n}} M_{n,i} \xrightarrow{} \oplus_{i
\in L_{n-1}} M_{n-1,i} \xrightarrow{} \oplus_{i \in L_{n-2}} M_{n-2,i}
\xrightarrow{} \cdots $$ Finally set $X \cup_{l \in \N} X^l$ and note
that this complex, obviously nonzero because at least one $I_n \neq
\phi$, will work. (The sets $K_n$ we claim to exist are the union of
all the $J_n$'s and all the various $L_i$ in sight. We still have
$|K_n| < \kappa$.)
\end{proof}

Now that we find small subcomplexes of complexes with degreewise
direct sum decompositions, we need to find small exact subcomplexes of
exact complexes with degreewise direct sum decompositions.

\begin{lemma}[(Exact Covering Lemma)]\label{lemma-exact covering lemma}
Let $\kappa$ be an infinite cardinal and suppose $Y$ is an exact
complex in which each $Y_n$ has a direct sum decomposition $Y_n =
\oplus_{i \in I_n} M_{n,i}$ where $|M_{n,i}| < \kappa$ for all $i \in
I_n$. Then for any choice of subcollections $K_n \subseteq I_n$, with
$|K_n| < \kappa$, we can find an exact subcomplex $T \subseteq Y$ with
each $T_n = \oplus_{i \in J_n} M_{n,i}$ for some subcollections $J_n
\subseteq I_n$ satisfying $K_n \subseteq J_n$ and $|J_n| < \kappa$.
\end{lemma}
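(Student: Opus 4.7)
The plan is to iterate the (non-exact) Covering Lemma countably many times, at each stage enlarging our subcomplex so that cycles left over from the previous stage become boundaries.

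First I would apply Lemma~\ref{lemma-covering lemma} to $Y$ with the given subcollections $K_n$ to obtain a starting subcomplex $T^{0}\subseteq Y$ with $T^0_n=\oplus_{i\in J^{0}_n}M_{n,i}$, where $K_n\subseteq J^{0}_n$ and $|J^{0}_n|<\kappa$. Since a direct sum of $<\kappa$ modules each of cardinality $<\kappa$ has cardinality $<\kappa$ (infinite cardinal arithmetic: for infinite $\lambda,\mu<\kappa$ we have $\lambda\mu=\max(\lambda,\mu)<\kappa$), we get $|T^{0}_n|<\kappa$.

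Having built $T^{k}\subseteq Y$ with $T^{k}_n=\oplus_{i\in J^{k}_n}M_{n,i}$ and $|J^{k}_n|<\kappa$, I enlarge as follows. For each $n$ and each cycle $z\in Z_nT^{k}$, the exactness of $Y$ lets me pick some $w_z\in Y_{n+1}$ with $dw_z=z$. Each $w_z$ lies in a finitely generated summand, so it meets only finitely many of the $M_{n+1,i}$. Form
\[
J^{k+1,\text{pre}}_{n+1} \;=\; J^{k}_{n+1}\,\cup\,\bigcup_{z\in Z_nT^{k}}\{\,i\in I_{n+1}\mid w_z \text{ has nonzero component in } M_{n+1,i}\,\}.
\]
The number of cycles is bounded by $|T^{k}_n|<\kappa$, each contributes finitely many indices, and $J^{k}_{n+1}$ has cardinality $<\kappa$, so $|J^{k+1,\text{pre}}_{n+1}|<\kappa$. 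Now apply Lemma~\ref{lemma-covering lemma} again with the subcollections $J^{k+1,\text{pre}}_n$ (in all degrees, padding with $J^{k}_n$ where no enlargement was needed) to obtain a subcomplex $T^{k+1}\supseteq T^{k}$ with $T^{k+1}_n=\oplus_{i\in J^{k+1}_n}M_{n,i}$ and $|J^{k+1}_n|<\kappa$.

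Finally set $T=\bigcup_{k\in\N}T^{k}$, so $T_n=\oplus_{i\in J_n}M_{n,i}$ with $J_n=\bigcup_k J^{k}_n$. Then $|J_n|\leq\aleph_0\cdot\sup_k|J^{k}_n|<\kappa$ (again using that $\kappa$ is infinite). For exactness: any cycle $z\in Z_nT$ lies in $T^{k}_n$ for some $k$ and is a cycle there, so by construction a preimage of $z$ was included in $T^{k+1}_{n+1}\subseteq T_{n+1}$; hence $z$ is a boundary in $T$. The main obstacle is precisely this cardinality bookkeeping: one must check at each stage that both the number of new summands introduced and the total after taking the countable union remain below $\kappa$, which uses the regularity-free identity $\lambda\mu=\max(\lambda,\mu)$ for infinite cardinals and the fact that only countably many iterations are needed because each cycle, once it appears, is dealt with at the very next stage.
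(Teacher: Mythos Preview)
Your argument takes a different route from the paper's and is arguably cleaner. The paper's proof maintains an \emph{exact} subcomplex at every stage but only arranges the direct-sum form $\oplus_{i\in L_n}M_{n,i}$ one degree at a time, iterating over degrees in a back-and-forth pattern $0,\ -1,0,1,\ -2,-1,0,1,2,\ \ldots$ so that in the countable union each degree has the direct-sum form by a cofinality argument. You do the opposite: you maintain the direct-sum form in every degree throughout (by re-applying the Covering Lemma after each enlargement) and achieve exactness only in the limit, since each cycle of $T^k$ acquires a preimage already in $T^{k+1}$. Both strategies terminate in a countable union, and your exactness check at the end is correct.

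One correction, though: your cardinality bookkeeping is \emph{not} regularity-free. The claim that a direct sum of $<\kappa$ modules each of cardinality $<\kappa$ has cardinality $<\kappa$ fails for singular $\kappa$ (take $\kappa=\aleph_\omega$ and countably many summands of sizes $\aleph_0,\aleph_1,\aleph_2,\ldots$); you need regularity to bound $\sup_{i\in J^k_n}|M_{n,i}|$ below $\kappa$, hence to bound $|T^k_n|$ and the number of cycles at each stage. Likewise $\sup_k|J^k_n|<\kappa$ requires $\mathrm{cf}(\kappa)>\omega$. This is not a defect relative to the paper---its own proof carries the same tacit assumption, and the lemma is only ever applied (in Theorem~\ref{theorem-filtrations for complexes of projectives}) with $\kappa$ regular---but you should drop the explicit claim that the identity $\lambda\mu=\max(\lambda,\mu)$ alone does the job.
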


\begin{proof}
We prove this in two steps.

\noindent (Step 1). We first show the following: If $X \subseteq Y$ is
any exact subcomplex with $|X| < \kappa$, then for any single one of
the given $K_n$, we can find an exact subcomplex $T \subseteq Y$
containing $X$ and so that for this given $n$, $T_n = \oplus_{i \in
L_n} M_{n,i}$ for some $L_n \subseteq I_n$ with $K_n \subseteq L_n$
and $|L_n| < \kappa$.

For this given $n$, first set $D_n = \{ \, i \in I_{n} \, | \, X_n
\cap M_{n,i} \neq 0 \, \}$. Since $|X_n| < \kappa$, we have $|D_n| <
\kappa$. Now define $L_n = D_n \cup K_n$ and set $T_n = \oplus_{i \in
L_n} M_{n,i}$. Of course $|L_n| < \kappa$ and $X_n \subseteq T_n$.

So all we need to do is extend $T_n$ into an exact subcomplex
containing $X$ and with cardinality less than $\kappa$. We build down
by setting $T_{n-1} = S_{n-1} + d(T_n)$ and $T_i = S_i$ for all $i <
n-1$. One can check that \[T_n \ar S_{n-1} + d(T_n) \ar S_{n-2} \ar
\cdots\] is exact. In particular, we have exactness in degree $n-1$
since $d(S_n) \subseteq d(T_n)$.

Next we build up from $T_n$. To start, take the kernel of $T_n \ar
T_{n-1}$ and find a $T'_{n+1} \subseteq Y_{n+1}$ such that $|T'_{n+1}|
< \kappa$ and $T'_{n+1}$ maps surjectively onto this kernel. Then take
$T_{n+1} = S_{n+1} + T'_{n+1}$. Now $T_{n+1}$ also maps surjectively
onto this kernel. We continue upward to build $T_{n+2}, T_{n+3},
\cdots$ in the same way and we are done.

\

\noindent We now finish the proof. From Step 1, taking $X = 0$ and the
subcollection to be $K_0$ we can find an exact subcomplex $T^0
\subseteq Y$ such that $(T^0)_0 = \oplus_{i \in L_0} M_{0,i}$ for some
$L_0 \subseteq I_0$ with $K_0 \subseteq L_0$ and $|L_0| < \kappa$.
Now using Step 1 again, with $X = T^0$ and using $K_{-1}$, we get
another exact subcomplex $T^1$ containing $T^0$ and such that
$(T^1)_{-1} = \oplus_{i \in L_{-1}} M_{{-1},i}$ for some $L_{-1}
\subseteq I_{-1}$ with $K_{-1} \subseteq L_{-1}$ and $|L_{-1}| <
\kappa$. Lets say that $T^0$ was constructed using a ``degree 0
operation'' and $T^1$ was constructed using a ``degree -1
operation''. Then we can continue to use ``degree $k$ operations''
with the following back and forth pattern on $k$:
\[0, \ \ -1,0,1, \ \ -2,-1,0,1,2, \ \ -3, -2, -1, 0, 1,2,3 \ \
\cdots\] to build an increasing union of exact subcomplexes, $\{\, T^l
\,\}$.  Finally set $T = \cup_{l \in \N} T^l$. Then by a cofinality
argument we see that for each $n$ we have $T_n = \oplus_{i \in J_n}
M_{n,i}$ for some subsets $J_n \subseteq I_n$ (the $J_n$'s are each a
countable union of the newly constructed $L_n$'s obtained in each
``pass'', and so $|J_n| < \kappa$). Clearly each $K_n \subseteq J_n$
and $T$ is exact.
\end{proof}

With these lemmas in hand, we turn to our specific situation.  Let $P$
be any complex of projective modules. As we have discussed,
Kaplansky~\cite{kaplansky-projective} proved that we can write $P_n = \oplus_{i
\in I_n} P_{n,i}$ for each $n$. Note that if $\kappa > \text{max}\{\,
|R| \, , \, \omega \,\}$ is a regular cardinal then $|P_{n,i}| <
\kappa$.

Next let $A$ be a given $R$-module. Using the natural isomorphism
\[
M \tensor_R (\oplus_{i \in \class{S}} N_i) \cong \oplus_{i \in
\class{S}} M \tensor_R N_i
\]
we may \emph{identify} $A \tensor_R P$ with the complex whose degree
$n$ is $\oplus_{i \in I_n} A \tensor_R P_{n,i}$. Moreover, for any
subcomplex $S \subseteq P$ of the form $S_n = \oplus_{i \in K_n}
P_{n,i}$ for some $K_n \subseteq I_n$ we can and will identify $A
\tensor_R S$ with the subcomplex of $A \tensor_R P$ whose degree $n$
is $\oplus_{i \in K_n} A \tensor_R P_{n,i} \subseteq \oplus_{i \in
I_n} A \tensor_R P_{n,i}$. We note that if $\kappa > \text{max}\{\,
|R| \, , \, \omega \,\}$ is a regular cardinal, then such a subcomplex
$S$ satisfies $|S| < \kappa$ whenever $|K_n| < \kappa$. Similarly, if
$\kappa > \text{max}\{\, |A| \, , \, \omega \,\}$ is a regular
cardinal, note that $|A \tensor_R S| < \kappa$ whenever $|K_n| <
\kappa$. We will use all of the above observations in the proof of our
theorem below.

\begin{theorem}\label{theorem-filtrations for complexes of projectives}
Let $A$ be a given $R$-module and take $\kappa > \text{max}\{\, |R| \,
, \, |A| \, , \, \omega \,\}$ to be a regular cardinal. Let $P$ be any
nonzero complex of projectives in which $A \tensor_R P$ is exact. Then
we can write $P$ as a continuous union $P = \cup_{\alpha < \lambda}
Q_{\alpha}$ where each $Q_{\alpha}, Q_{\alpha + 1}/Q_{\alpha}$ are
also $A \tensor_R -$ exact complexes of projectives and $|Q_{\alpha}|,
|Q_{\alpha + 1}/Q_{\alpha}| < \kappa$.
\end{theorem}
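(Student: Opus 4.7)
The plan is to use the Kaplansky decomposition $P_n = \bigoplus_{i \in I_n} P_{n,i}$ with $|P_{n,i}| < \kappa$, and to construct the filtration by transfinite induction on $\alpha$, maintaining the invariant that each $Q_\alpha$ is a \emph{summand subcomplex} of $P$, meaning $(Q_\alpha)_n = \bigoplus_{i \in H_n^\alpha} P_{n,i}$ for some subcollection $H_n^\alpha \subseteq I_n$, and that $A \otimes_R Q_\alpha$ is exact. Being a summand subcomplex guarantees that both $P/Q_\alpha$ and $Q_{\alpha+1}/Q_\alpha$ are again complexes of projectives of the same Kaplansky shape, with the relevant short exact sequences of complexes split in every degree.

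Begin with $Q_0 = 0$. At a limit ordinal $\beta$ set $Q_\beta = \bigcup_{\alpha < \beta} Q_\alpha$; this preserves the invariant because $A \otimes_R (-)$ commutes with filtered colimits and filtered colimits of exact complexes are exact. For the successor step, assume $Q_\alpha \neq P$. The degreewise split short exact sequence $0 \to Q_\alpha \to P \to P/Q_\alpha \to 0$ remains short exact after tensoring with $A$, so the long exact sequence in homology shows that $A \otimes_R (P/Q_\alpha)$ is exact. Pick some $i_0 \in I_n \setminus H_n^\alpha$ for some $n$ (possible since $Q_\alpha \neq P$) and apply the Exact Covering Lemma to the exact complex $A \otimes_R (P/Q_\alpha)$, whose degree $n$ summands $A \otimes_R P_{n,i}$ each have cardinality less than $\kappa$ (since $\kappa > \max\{|A|, |R|, \omega\}$). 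Using the starting data $K_n = \{i_0\}$ and $K_m = \emptyset$ for $m \neq n$, we obtain an exact subcomplex $T$ of $A \otimes_R (P/Q_\alpha)$ of the form $T_k = \bigoplus_{i \in L_k} (A \otimes_R P_{k,i})$ with $L_k \subseteq I_k \setminus H_k^\alpha$, $|L_k| < \kappa$, and $i_0 \in L_n$.

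Now define $Q_{\alpha+1}$ by $H_k^{\alpha+1} = H_k^\alpha \cup L_k$. Then $Q_{\alpha+1}/Q_\alpha$ is the summand subcomplex $\bigoplus_{i \in L_k} P_{k,i}$ in degree $k$, a complex of projectives whose tensor product with $A$ is exactly $T$, hence exact. Since the sequence $0 \to Q_\alpha \to Q_{\alpha+1} \to Q_{\alpha+1}/Q_\alpha \to 0$ is split in each degree and two of the three terms are $A$-tensor exact, so is $A \otimes_R Q_{\alpha+1}$. The cardinality bound $|Q_{\alpha+1}/Q_\alpha| < \kappa$ follows from $|L_k| < \kappa$, $|P_{k,i}| < \kappa$, and the regularity of $\kappa$ (bounding a $\kappa$-small sum of $\kappa$-small modules across countably many degrees). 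The induction terminates at the least $\lambda$ with $Q_\lambda = P$, which must exist because at least one new index is added to $\bigsqcup_n H_n^\alpha \subseteq \bigsqcup_n I_n$ at each successor.

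The principal technical obstacle is to arrange that the successor quotients $Q_{\alpha+1}/Q_\alpha$ are themselves complexes of projectives on which $A \otimes_R -$ is exact; this is precisely what the summand-subcomplex invariant, combined with the Exact Covering Lemma's structural output (exact subcomplexes that are \emph{direct sums of the prescribed summands}), is designed to deliver. A naive approach producing a small exact subcomplex inside $A \otimes_R P$ by some other means would lift to a small exact subcomplex of $P$ but not one sitting as a summand, and the quotient $Q_{\alpha+1}/Q_\alpha$ would then fail to be a complex of projectives.
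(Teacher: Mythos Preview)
Your argument contains a genuine gap at the successor step. You apply the Exact Covering Lemma to the exact complex $A\otimes_R(P/Q_\alpha)$ and obtain index sets $L_k$ such that $T_k=\bigoplus_{i\in L_k}(A\otimes_R P_{k,i})$ is an exact \emph{subcomplex} of $A\otimes_R(P/Q_\alpha)$. You then assert that $\bigoplus_{i\in L_k}P_{k,i}$ is a subcomplex of $P/Q_\alpha$ whose tensor with $A$ is $T$. But this does not follow: the differential of $P/Q_\alpha$ may carry $P_{k,i}$ (for $i\in L_k$) into summands $P_{k-1,j}$ with $j\notin L_{k-1}$, while the induced map $A\otimes_R P_{k,i}\to A\otimes_R P_{k-1,j}$ vanishes. (For a concrete instance, take $R=\Z$, $A=\Z/2$, and a component of the differential given by multiplication by $2$.) Thus the index sets $L_k$ that make $T$ a subcomplex of $A\otimes_R(P/Q_\alpha)$ need not make $\bigoplus_{i\in L_k}P_{k,i}$ a subcomplex of $P/Q_\alpha$, and your $Q_{\alpha+1}$ is not well-defined as a subcomplex of $P$. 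Ironically, your closing paragraph warns against exactly this failure mode.

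The paper handles this by a countable back-and-forth between the two lemmas. Starting from a seed, one applies the Covering Lemma to $P$ to get an honest summand subcomplex $S^1$ of $P$; then applies the Exact Covering Lemma to $A\otimes_R P$ to enlarge the index sets so that the resulting $T^1$ is exact in $A\otimes_R P$; but now those enlarged index sets may fail to give a subcomplex of $P$, so one applies the Covering Lemma again to get $S^2\supseteq S^1$; and so on. The union $Q=\bigcup_l S^l$ is then simultaneously a summand subcomplex of $P$ (because each $S^l$ is) and has $A\otimes_R Q=\bigcup_l T^l$ exact (because each $T^l$ is and $A\otimes_R(-)$ commutes with the directed union). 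This interleaving is the missing ingredient in your argument; once it is in place, your transfinite induction goes through exactly as you wrote.
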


\begin{proof}
Write each $P_n = \oplus_{i \in I_n} P_{n,i}$ where $P_{n,i}$ are
countably generated. We prove the theorem in two steps.

\noindent (Step 1). We first show the following: We can find a nonzero
subcomplex $Q \subseteq P$ of the form $Q_n = \oplus_{i \in L_n}
P_{n,i}$ for some subcollections $L_n \subseteq I_n$ having $|L_n| <
\kappa$ and such that $A \tensor_R Q$ is exact.

Since $P$ is nonzero at least one $P_n \neq 0$. For this $n$, take any
nonempty $J_n \subseteq I_n$ having $|J_n| < \kappa$. Apply the
Covering Lemma with $P$ in the place of $X$ and taking the
subcollections to consist of this $J_n$ and all the other $J_n$ may be
empty. This gives us a nonzero subcomplex with $S^1_n = \oplus_{i \in
K^1_n} P_{n,i}$ for some subcollections $K^1_n \subseteq I_n$
satisfying $J_n \subseteq K^1_n$ and $|K^1_n| < \kappa$ for each $n$.

Now $A \tensor_R S^1$ is the subcomplex of $A \tensor_R P$ having $(A
\tensor_R S^1)_n = \oplus_{i \in K^1_n} A \tensor_R P_{n,i}$. That is,
the subcollections $K^1_n \subseteq I_n$ determine $A \tensor_R
S^1$. We now apply the Exact Covering Lemma with $A \tensor_R P$ in
the place of $Y$ and taking the subcollections to be the $K^1_n$. This
gives us an exact subcomplex $T^1 \subseteq A \tensor_R P$ with each
$T^1_n = \oplus_{i \in J^1_n} A \tensor_R P_{n,i}$ for some
subcollections $J^1_n \subseteq I_n$ satisfying $K^1_n \subseteq
J^1_n$ and $|J^1_n| < \kappa$.

But perhaps now the direct sums $\oplus_{i \in J^1_n} P_{n,i}$ don't
even form a \emph{subcomplex} of $P$ (because the tensor product with
$A$ may send some maps to $0$). So we again apply the Covering Lemma
to $P$ with the $J^1_n$ as the subcollections to find a subcomplex
$S^2 \subseteq P$ with each $S^2_n = \oplus_{i \in K^2_n} P_{n,i}$ for
some subcollections $K^2_n \subseteq I_n$ satisfying $J^1_n \subseteq
K^2_n$ and $|K^2_n| < \kappa$. Of course $S^1 \subseteq S^2$ because
$K^1_n \subseteq K^2_n$ for each $n$.

But now certainly $A \tensor_R S^2$ need not be exact, so we again
apply the Exact Covering Lemma to $A \tensor_R P$ taking the
subcollections to be the $K^2_n$. This gives us an exact subcomplex
$T^2 \subseteq A \tensor_R P$ with each $T^2_n = \oplus_{i \in J^2_n}
A \tensor_R P_{n,i}$ for some subcollections $J^2_n \subseteq I_n$
satisfying $K^2_n \subseteq J^2_n$ and $|J^2_n| < \kappa$. Notice that
we have $A \tensor_R S^1 \subseteq T^1 \subseteq A \tensor_R S^2
\subseteq T^2$ because $K^1_n \subseteq J^1_n \subseteq K^2_n
\subseteq J^2_n$.

And so it goes. The $\oplus_{i \in J^2_n} P_{n,i}$ need not form a
subcomplex of $P$. So we continue this back and forth, applying the
Covering Lemma to $P$ and the newly obtained subcollections $J^l_n$,
and then applying the Exact Covering Lemma to $A \tensor_R P$ and the
newly found subcollections $K^l_n$. We obtain an increasing sequence
of subcomplexes of $P$ $$0 \neq S^1 \subseteq S^2 \subseteq S^3
\subseteq \cdots$$ corresponding to the subcollections $J^1_n
\subseteq J^2_n \subseteq J^3_n \subseteq \cdots$. We also get an
increasing sequence of subcomplexes of $A \tensor_R P$
$$A \tensor_R S^1 \subseteq T^1 \subseteq A \tensor_R S^2 \subseteq
T^2 \subseteq A \tensor_R S^3 \subseteq  T^3 \subseteq \cdots$$
with each $T^l$ exact.

So we set $Q = \cup_{l \in \N} S^l$ and claim that $Q$ satisfies the
properties we sought. Indeed notice each $Q_n = \oplus_{i \in L_n}
P_{n,i}$ where $L_n = \cup_{l \in \N} J^l_n$. Also we still have
$|L_n| < \kappa$. Finally, since $A \tensor_R -$ commutes with direct
limits we get $A \tensor_R Q = \cup_{l \in \N} A \tensor_R S^l
=\cup_{l \in \N} T^l$. This complex is exact because each $T^l$ is
exact.

\

\noindent (Step 2). We now can easily finish to obtain the desired
continuous union. Start by finding a nonzero $Q^0 \subseteq P$ of the
form $Q^0_n = \oplus_{i \in L^0_n} P_{n,i}$ for some subcollections
$L^0_n \subseteq I_n$ having $|L^0_n| < \kappa$ and such that $A
\tensor_R Q^0$ is exact. Note that $Q^0$ and $P/Q^0$ are also
complexes of projectives and since $0 \xrightarrow{} Q^0
\xrightarrow{} P \xrightarrow{} P/Q^0 \xrightarrow{} 0$ is a
degreewise split short exact sequence, so is $0 \xrightarrow{} A
\tensor_R Q^0 \xrightarrow{} A \tensor_R P \xrightarrow{} A \tensor_R
P/Q^0 \xrightarrow{} 0$. It follows that $A \tensor_R P/Q^0$ must also
be exact. So if it happens that $P/Q^0$ is nonzero we can in turn find
a nonzero subcomplex $Q^1/Q^0 \subseteq P/Q^0$ with $Q^1/Q^0$ and
$(P/Q^0)/(Q^1/Q^0) \cong P/Q^1$ both $A \tensor_R -$ exact complexes
of projectives with cardinality less than $\kappa$. Note that we can
identify these quotients such as $P/Q^0$ as complexes whose degree $n$
entry is $\oplus_{i \in I_n-L_n} P_{n,i}$ and in doing so we may
continue to find an increasing union $0 \neq Q^0 \subseteq Q^1
\subseteq Q^2 \subseteq \cdots $ corresponding to a nested union of
subsets $L^0_n \subseteq L^1_n \subseteq L^2_n \subseteq \cdots$ for
each $n$. Assuming this process doesn't terminate we set $Q^{\omega} =
\cup_{\alpha < \omega} Q^{\alpha}$ and note that $Q^{\omega}_n =
\oplus_{i \in L^{\omega}_n} P_{n,i}$ where $L^{\omega}_n =
\cup_{\alpha < \omega} L^{\alpha}_n$. So still, $Q^{\omega}$ and
$P/Q^{\omega}$ are complexes of projectives and are $A \tensor_R -$
exact since $A \tensor_R -$ commutes with direct limits. Therefore we
can continue this process with $P/Q^{\omega}$ to obtain $Q^{\omega
+1}$ with all the properties we desire. Using this process we can
obtain an ordinal $\lambda$ and a continuous union $P = \cup_{\alpha <
\lambda} Q^{\alpha}$ with each $Q_{\alpha}, Q_{\alpha + 1}/Q_{\alpha}$
$A \tensor_R -$ exact complexes of projectives having $|Q_{\alpha}|,
|Q_{\alpha +1}/Q_{\alpha}| < \kappa$.
\end{proof}

We can now prove Theorem~\ref{thm-how to create projective on chain}

\begin{proof}
The plan is to apply Proposition~\ref{prop-how to create a projective
model structure}. First let $\kappa > \text{max}\{\, |R| \, , \, |A|
\, , \, \omega \,\}$ be a regular cardinal and let $S$ be the set of
all complexes $P \in \class{C}$ such that $|P| \leq \kappa$. (We
really need to take a representative for each isomorphism class so
that we actually get a set as opposed to a proper class). Since any
set $S$ cogenerates a complete cotorsion pair
$(\leftperp{(\rightperp{S})},\rightperp{S})$ it is enough to show
$\rightperp{S} = \rightperp{\class{C}}$. But this follows right away
from the chain complex version of Theorem~7.3.4
of~\cite{enochs-jenda-book}. The remaining remaining two properties of
Proposition~\ref{prop-how to create a projective model structure} to
check hold by straight duality of the proofs for the injective models
in Theorem~\ref{thm-how to create injective on chain}.
\end{proof}

\section{The Gorenstein AC-projective model structure on modules}\label{sec-Gor-proj}

We saw in Section~\ref{sec-Gorenstein-inj} that the exact AC-acyclic
model structure on $\ch$ gives rise to a Quillen equivalent model
structure on $\rmod $ in which the fibrant objects are the Gorenstein
AC-injectives.  One would then expect the exact firmly acyclic
model structure to give rise to a similar model structure on $\rmod$.
We construct this Gorenstein AC-projective model structure in this
section.

Recall that a module $M$ is \textbf{Gorenstein projective} if
$M=Z_{0}X$ for some totally acyclic complex of projectives; that is,
$X$ is exact and $\Hom (X,P)$ is exact for all projective modules $P$.
In view of Theorem~\ref{thm-Inj-exact-level}, we define $M$ to be
\textbf{Gorenstein AC-projective} if $M=Z_{0}X$ for some exact
firmly acyclic complex of projectives; that is, $X$ is exact and $\Hom
(X,F)$ is exact for all level (left) modules $F$, equivalently, $I \tensor_R X$ is exact for all AC-modules $I$.  Note that every Gorenstein
AC-projective is Gorenstein projective, and the two concepts agree if
every level module has finite projective dimension. On the other hand, over coherent rings 
the Gorenstein AC-projectives are exactly the Ding projectives from~\cite{gillespie-ding}. 

Our model structure on $\rmod$ will then have $\class{C}$ consist of
the Gorenstein AC-projective modules, $\class{F}$ consist of all
modules, and $\class{W}=\rightperp{\class{C}}$.  We now proceed as in
Section~\ref{sec-Gorenstein-inj}.

\begin{lemma}\label{lem-proj-cycles-of-W}
Let $R$ be a ring and suppose $Y$ is a complex of $R$-modules with
$H_{i}Y=0$ for $i>0$ and $Y_{i}$ level for $i<0$.  Then $Y$ is trivial
in the exact firmly acyclic model structure if and only if
$Y_{0}/B_{0}Y\in \class{W}$.
\end{lemma}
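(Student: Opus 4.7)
The plan is to dualize the proof of Lemma~\ref{lem-cycles-of-W}. Given an exact firmly acyclic complex of projectives $X$, I will construct a natural isomorphism
\[
\Ext^1_{\ch}(X, Y) \cong \Ext^1_R(Z_{-1}X, M), \qquad M = Y_0/B_0 Y.
\]
Every Gorenstein AC-projective module arises as $Z_{-1}X$ for some such $X$ (by shifting a complete resolution), so this identifies the condition that $Y$ is trivial in the exact firmly acyclic model structure with the condition $M \in \class{W}$, proving the lemma.

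To set up the two sides: since $X$ is a complex of projectives, Lemma~\ref{lemma-homcomplex-basic-lemma} gives $\Ext^1_{\ch}(X,Y) \cong \ch(X, \Sigma Y)/{\sim}$; and the short exact sequence $0 \to Z_0 X \to X_0 \to Z_{-1}X \to 0$ together with $\Ext^1(X_0, M) = 0$ gives $\Ext^1(Z_{-1}X, M) \cong \Hom(Z_0 X, M)/\im \Hom(X_0, M)$. For a chain map $\phi \mathcolon X \to \Sigma Y$, the chain-map relation $-d\phi_2 = \phi_1 d$ forces $\phi_1(Z_1 X) \subseteq B_0 Y$, so $q\phi_1 \mathcolon X_1 \to M$ (with $q \mathcolon Y_0 \to M$ the quotient) descends along $X_1 \twoheadrightarrow X_1/Z_1 X = Z_0 X$ to a map $\overline{q\phi_1} \mathcolon Z_0 X \to M$. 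Define $\tau([\phi]) = [\overline{q\phi_1}]$; homotopic $\phi, \phi'$ yield $\overline{q\phi_1} - \overline{q\phi'_1} = qH_0|_{Z_0 X}$, which extends to $X_0$ via $qH_0$, so $\tau$ is well-defined.

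For surjectivity, given $f \mathcolon Z_0 X \to M$, use projectivity of $X_1$ to lift $fd \mathcolon X_1 \to M$ along $q$ to $\phi_1 \mathcolon X_1 \to Y_0$ with $q\phi_1 = fd$; the descent then equals $f$. Extend $\phi_1$ to a full chain map by induction in both directions. Upward ($n \geq 2$): lift $-\phi_{n-1}d$ along $d \mathcolon Y_{n-1} \to Y_{n-2}$ using projectivity of $X_n$; the image $\phi_{n-1}(Z_{n-1}X)$ lies in $B_{n-2}Y$ by construction of $\phi_1$ when $n = 2$, and because $H_{n-2}Y = 0$ forces $Z_{n-2}Y = B_{n-2}Y$ when $n \geq 3$. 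Downward ($n \leq 0$): the chain-map relation first defines $\phi_n$ on $B_n X = Z_n X$, and the extension to $X_n$ has obstruction $\Ext^1(Z_{n-1}X, Y_{n-1})$; this vanishes because $Z_{n-1}X$ is Gorenstein AC-projective, $Y_{n-1}$ is level (as $n-1<0$), and $\Ext^1(G,F)$ vanishes for any Gorenstein AC-projective $G$ and level $F$---an immediate consequence of the defining exactness of $\Hom(X,F)$ on the complete resolution.

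For injectivity, if $\overline{q\phi_1}$ extends to some $g \mathcolon X_0 \to M$, lift $g$ to $\tilde g \mathcolon X_0 \to Y_0$ via projectivity of $X_0$ and set $H_0 = \tilde g$; then $qH_0 d = q\phi_1$, and a short check gives $(\phi_0 + dH_0)|_{Z_0 X} = 0$. From this base, construct $H_n$ for $n \geq 1$ by lifting $H_{n-1}d - \phi_n$ along $d$ (the cocycle identity $d(H_{n-1}d - \phi_n) = 0$ combined with $H_{n-1}Y = 0$ places the target in $B_{n-1}Y$), and construct $H_n$ for $n \leq -1$ by extending from $B_{n-1}X$ to $X_{n-1}$ (using the same $\Ext^1$ vanishing as above). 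The main obstacle is the careful bookkeeping of the two-sided inductive construction and the propagation of cocycle conditions; the essential algebraic ingredient---vanishing of $\Ext^1$ between Gorenstein AC-projectives and level modules---is essentially tautological given the definition of firm acyclicity.
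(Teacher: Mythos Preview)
Your proposal is correct and follows essentially the same approach as the paper: both set up the natural map
\[
\ch(X,\Sigma Y)/{\sim}\;\longrightarrow\;\Hom(Z_{0}X,\,Y_{0}/B_{0}Y)\big/\Hom(X_{0},\,Y_{0}/B_{0}Y)\;\cong\;\Ext^{1}(Z_{-1}X,\,Y_{0}/B_{0}Y)
\]
via $\phi\mapsto\overline{q\phi_{1}}$ and then argue it is an isomorphism by the dual of the surjectivity/injectivity argument in Lemma~\ref{lem-cycles-of-W}. The paper in fact stops after defining the map and simply says ``we now show this map is an isomorphism in analogous fashion to the proof of Lemma~\ref{lem-cycles-of-W},'' whereas you spell out the two inductive extensions, correctly identifying that the upward step uses projectivity of the $X_{n}$ together with $H_{i}Y=0$ for $i>0$, and the downward step uses the vanishing $\Ext^{1}(Z_{n-1}X,Y_{n-1})=0$ coming from firm acyclicity of $X$ and levelness of $Y_{n-1}$.
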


The proof of this lemma is very similar to the proof of
Lemma~\ref{lem-cycles-of-W}, suitably dualized.  We will set up the
outline, then leave the rest of the proof to the reader.

 \begin{proof}
Suppose $M$ is Gorenstein AC-projective, so that
$M=Z_{-1}X=B_{-1}X=X_{0}/B_{0}X$ for some exact firmly acyclic
complex of projectives $X$.  We claim that there is an isomorphism
\[
\Ext^{1} (X,Y) \xrightarrow{} \Ext^{1} (M,Y_{0}/B_{0}Y);
\]
this isomorphism would prove the lemma.  Indeed, because $X$ is a
complex of projectives, Lemma~\ref{lemma-homcomplex-basic-lemma} gives
us an isomorphism \[ \Ext^{1} (X,Y) \xrightarrow{} \ch (X, \Sigma
Y)/\sim, \] where $\sim$ denotes chain homotopy.  A chain map $\phi
\mathcolon X\xrightarrow{}\Sigma Y$ induces a map $B_{0}X\cong
X_{1}/B_{1}X\xrightarrow{}Y_{0}/B_{0}Y$.  A chain homotopy between
$\phi $ and $0$ gives us maps $D_{n}\mathcolon
X_{n}\xrightarrow{}Y_{n}$ with $-dD_{n}+D_{n-1}d=\phi _{n}$.  In
particular, $D_{0}d=\phi_{1}$ as a map from $X_{1}$ to $Y_{0}/B_{0}Y$.
Thus, there is a natural map
\begin{gather*}
 \ch (X, \Sigma Y)/\!\sim \xrightarrow{} \Hom (B_{0}X, Y_{0}/B_{0}Y)/\Hom
 (X_{0},Y_{0}/B_{0}Y)\\
\cong \Ext^{1} (M, Y_{0}/B_{0}Y).
 \end{gather*}

We now show this map is an isomorphism in analogous fashion to the
proof of Lemma~\ref{lem-cycles-of-W}.

 \end{proof}

Just as in the injective case, we then get the following scholium.

\begin{proposition}\label{prop-Gor-proj-induced}
For any ring $R$, suppose $M$ and $N$ are Gorenstein AC-projective
modules, with $M=Z_{0}X$ and $N=Z_{0}Y$ for $X$ and $Y$ exact
firmly acyclic complexes of projectives.  Given a map $f\mathcolon
M\xrightarrow{}N$, there is a chain map $\phi \mathcolon
X\xrightarrow{}Y$ with $Z_{0}\phi =f$.
\end{proposition}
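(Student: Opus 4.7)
The plan is to mimic the surjectivity argument from the proof of Lemma~\ref{lem-cycles-of-W} (the same scholium that gives Proposition~\ref{prop-Gor-inj-induced}), with the roles of projectives and injectives dualized. The key technical ingredient replacing the use of injectivity of $Y_n$ in the injective version is the following: each $Y_n$ is projective, hence flat, hence level, so firm acyclicity of $X$ implies that the complex of abelian groups $\Hom_R(X, Y_n)$ is exact for every $n \in \Z$. This single exactness statement will drive the whole construction.

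First I would construct $\phi_0 \mathcolon X_0 \to Y_0$. Viewing $f$ as landing in $Y_0$ through the inclusion $N \hookrightarrow Y_0$, the composite $f \circ d_1 \mathcolon X_1 \to Y_0$ is a $1$-cycle in $\Hom_R(X, Y_0)$ because $(f d_1) d_2 = 0$. Exactness yields $\phi_0$ with $\phi_0 d_1 = f d_1$, and since exactness of $X$ makes $d_1$ surject onto $B_0 X = Z_0 X = M$, this forces $\phi_0|_M = f$, which is exactly the condition $Z_0 \phi = f$.

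Next I would extend to positive degrees by the classical comparison theorem. The sequences $\cdots \to X_2 \to X_1 \xrightarrow{d_1} M \to 0$ and $\cdots \to Y_2 \to Y_1 \xrightarrow{d_1} N \to 0$ are projective resolutions of $M$ and $N$ (using exactness of $X$ and $Y$), and projectivity of each $X_n$ combined with surjectivity of $d_n \mathcolon Y_n \to Z_{n-1} Y$ inductively produces $\phi_n \mathcolon X_n \to Y_n$ lifting $f$, with compatibility $d_1 \phi_1 = f d_1 = \phi_0 d_1$ holding automatically. For non-positive degrees, I would iterate the same cycle-to-boundary trick: once $\phi_n$ is defined for some $n \leq 0$, the map $d \phi_n$ is a cycle in $\Hom_R(X, Y_{n-1})$ because $(d \phi_n) d_{n+1} = d(\phi_n d_{n+1}) = d(d \phi_{n+1}) = 0$ (using the chain-map relation one degree higher), so exactness produces $\phi_{n-1}$ with $\phi_{n-1} d_n = d \phi_n$.

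The main obstacle is really the very first step: extending $f$ from the submodule $M \subseteq X_0$ to all of $X_0$. This extension cannot be obtained from projectivity of $X_0$ alone, and it is precisely where firm acyclicity of $X$ earns its keep. Once $\phi_0$ is in place, the extension to positive degrees is standard homological algebra, and the extension to negative degrees is just repeated application of the same firm-acyclicity trick.
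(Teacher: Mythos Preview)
Your argument is correct and is precisely the dualization the paper has in mind: the paper presents this proposition as a scholium of Lemma~\ref{lem-proj-cycles-of-W}, whose proof is in turn the dual of Lemma~\ref{lem-cycles-of-W}, and your use of projectivity of the $X_n$ for positive degrees together with firm acyclicity of $X$ against the level modules $Y_n$ for $\phi_0$ and negative degrees is exactly that dual. In short, your proof is the written-out version of what the paper leaves implicit.
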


This proposition then leads to the following lemma, analogous to
Lemma~\ref{lem-Gor-inj-retracts} with the dual proof.

\begin{lemma}\label{lem-Gor-proj-retracts}
For any ring $R$, the collection of Gorenstein AC-projective modules
is closed under retracts.
\end{lemma}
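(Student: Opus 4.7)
The plan is to dualize the proof of Lemma~\ref{lem-Gor-inj-retracts}. First I would verify that the class of Gorenstein AC-projective modules is closed under direct sums: direct sums of exact complexes are exact, and $\Hom(\bigoplus C_j,F)\cong\prod\Hom(C_j,F)$ reduces firm acyclicity to a product of exact complexes of abelian groups, which is itself exact. The Eilenberg swindle then gives: if $N\oplus N'$ is Gorenstein AC-projective, then $M=\bigoplus_{i=1}^{\infty}(N\oplus N')$ is Gorenstein AC-projective with $M\cong N\oplus M$, and the resulting split epimorphism $f\colon M\twoheadrightarrow M$ has kernel $N$.

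Next I would choose an exact firmly acyclic complex of projectives $X$ with $Z_0X=M$ and use Proposition~\ref{prop-Gor-proj-induced} to realize $f$ as $Z_0\varphi$ for some chain map $\varphi\colon X\to X$. The main step, and the main obstacle, is to modify $\varphi$ into a genuine degreewise epimorphism $\psi\colon X'\to X$ between exact firmly acyclic complexes of projectives, without disturbing the degree-zero data. Dually to the injective construction, I would take
\[
X'=X\oplus\bigoplus_{j\neq 1}D^{j}(X_j),
\]
with $\psi=(\varphi,(\sigma_j)_{j\neq 1})$, where each $\sigma_j\colon D^{j}(X_j)\to X$ is the chain map equal to the identity in degree $j$ and to $d\colon X_j\to X_{j-1}$ in degree $j-1$. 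A direct check shows $Z_0X'=Z_0X=M$ and $Z_0\psi=f$: only $D^{1}(X_1)$ would contribute to the added zero-cycles, since $(D^{1}X_1)_0=X_1$ has zero differential into degree $-1$, which is precisely why that index is excluded from the sum.

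The delicate point is surjectivity of $\psi$ in the excluded degree $m=1$. For each $m\neq 1$ the summand $\sigma_m$ contributes the identity $X_m\to X_m$, so $\psi_m$ is plainly onto. In degree $1$ the only relevant extra term is $\sigma_2$, giving $d\colon X_2\to X_1$, so $\im\psi_1=\varphi_1(X_1)+B_1X=\varphi_1(X_1)+Z_1X$ by exactness of $X$. Via the isomorphism $X_1/Z_1X\xrightarrow{\cong}B_0X=M$ induced by $d$, the chain-map relation $d\varphi_1=\varphi_0d$ identifies the map that $\varphi_1$ induces on $X_1/Z_1X$ with $f\colon M\to M$; since $f$ is surjective, $\psi_1$ is surjective as well.

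Once $\psi$ is known to be a degreewise epimorphism, the rest is formal. The short exact sequence $0\to K\to X'\to X\to 0$ with $K=\ker\psi$ is degreewise split because each $X_n$ is projective, so $K$ is a complex of projectives; the long exact sequences in homology and in $\Hom(-,F)$ for level $F$ show that $K$ is both exact and firmly acyclic, since $X$ and $X'$ are. Applying $Z_0$ and using exactness of $X$ (exactly as in the injective proof) yields $0\to Z_0K\to M\xrightarrow{f}M\to 0$, so $Z_0K\cong\ker f=N$, proving that $N$ is Gorenstein AC-projective.
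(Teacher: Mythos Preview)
Your proof is correct and is precisely the dualization the paper intends: coproducts replace products in the swindle, the counit maps $\sigma_j\colon D^jX_j\to X$ replace the unit maps $\rho_i\colon X\to D^{i+1}X_i$, and excluding $j=1$ (rather than $i=0$) is exactly the choice that both preserves $Z_0$ and makes the delicate surjectivity check in the missing degree reduce to surjectivity of $f$. The only cosmetic point is your final appeal to ``exactness of $X$ (exactly as in the injective proof)'': in the injective proof $X$ is the \emph{kernel} of the short exact sequence, whereas here it is the quotient, but you do not actually need that argument since you already know $Z_0\psi=f$ is onto and $Z_0$ is left exact.
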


\begin{lemma}\label{lem-proj-spheres}
A module $M$ over a ring $R$ is in $\class{W}$ if and only if $S^{0}M$
is trivial in the exact firmly acyclic model structure.
\end{lemma}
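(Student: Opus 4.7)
The plan is to mirror the proof of Lemma~\ref{lem-spheres} dualized to the projective setting by establishing the natural isomorphism
\[
\Ext^{1}_{\ch}(X, S^{0}M) \cong \Ext^{1}(Z_{-1}X, M)
\]
for any exact complex of projectives $X$. From this, the biconditional follows immediately once we observe that $Z_{-1}X$ traces out the Gorenstein AC-projective modules as $X$ runs through the exact firmly acyclic complexes of projectives.

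First I would reduce to a degreewise computation: since every $X_n$ is projective, every short exact sequence $0 \to S^{0}M \to Z \to X \to 0$ splits in each degree, so $\Ext^{1}_{\ch}(X, S^{0}M) = \Ext^{1}_{dw}(X, S^{0}M)$. Applying Lemma~\ref{lemma-homcomplex-basic-lemma} (with an appropriate choice of shift) identifies this group with $H_{-1}\homcomplex(X, S^{0}M)$. A direct inspection of the $\homcomplex$ formula shows that this complex is (up to signs) the cochain complex $\Hom(X_{*},M)$, so
\[
H_{-1}\homcomplex(X, S^{0}M) \cong \ker\bigl(\Hom(X_1,M)\xrightarrow{d_2^*}\Hom(X_2,M)\bigr)\big/\im\bigl(\Hom(X_0,M)\xrightarrow{d_1^*}\Hom(X_1,M)\bigr).
\]

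Next I would identify this with $\Ext^{1}(Z_{-1}X,M)$ by exploiting the exactness of $X$. The short exact sequence $0 \to Z_{0}X \to X_{0} \to Z_{-1}X \to 0$ together with projectivity of $X_{0}$ gives $\Ext^{1}(Z_{-1}X,M) \cong \coker(\Hom(X_{0},M) \to \Hom(Z_{0}X,M))$, and the sequence $0 \to Z_{1}X \to X_{1} \to Z_{0}X \to 0$ combined with $Z_{1}X = B_{1}X = \im(X_{2}\to X_{1})$ identifies $\Hom(Z_{0}X,M)$ with $\ker(\Hom(X_{1},M) \to \Hom(X_{2},M))$. Assembling these pieces yields the claimed isomorphism.

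Finally, a module $C$ is Gorenstein AC-projective if and only if $C = Z_{-1}X$ for some exact firmly acyclic complex of projectives $X$; indeed, writing $C = Z_{0}Y$ and setting $X = \Sigma^{-1}Y$ preserves the relevant properties, and conversely $Z_{-1}X = Z_{0}(\Sigma X)$. Hence $M \in \class{W} = \rightperp{\class{C}}$ if and only if $\Ext^{1}_{\ch}(X, S^{0}M) = 0$ for every exact firmly acyclic complex of projectives $X$, which is exactly the statement that $S^{0}M$ is trivial in the exact firmly acyclic model structure. There is no real obstacle; the only step requiring care is the cohomological identification in the second paragraph, which is routine given the exactness of $X$ and projectivity of its terms.
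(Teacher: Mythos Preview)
Your proposal is correct and follows essentially the same approach as the paper: the paper's proof consists solely of asserting the isomorphism $\Ext^{1}(X,S^{0}M)\cong\Ext^{1}(Z_{-1}X,M)$ for exact complexes of projectives $X$ as ``a calculation using Lemma~\ref{lemma-homcomplex-basic-lemma}'', and you have simply spelled out that calculation and made the shift argument for the range of $Z_{-1}X$ explicit.
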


\begin{proof}
A calculation using Lemma~\ref{lemma-homcomplex-basic-lemma} shows
that
\[
\Ext^{1} (X, S^{0}M) = \Ext^{1} (Z_{-1}X, M).
\]
for any exact complex of projectives $X$.
\end{proof}

\begin{theorem}\label{thm-Gor-proj-module}
For any ring $R$, there is an abelian model structure on $\rmod$, the
\textbf{Gorenstein AC-projective model structure}, in which every object
is fibrant and the cofibrant objects are the Gorenstein AC-projective
modules.
\end{theorem}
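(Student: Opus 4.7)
The plan is to apply Proposition~\ref{prop-how to create a projective model structure} with $\class{C}$ the class of Gorenstein AC-projective modules and $\class{W}=\rightperp{\class{C}}$; this requires showing that $(\class{C},\class{W})$ is a complete cotorsion pair, that $\class{W}$ is thick, and that $\class{P}\subseteq \class{W}$. The easy conditions go through Lemma~\ref{lem-proj-spheres}: $M\in \class{W}$ iff $S^0 M$ is trivial in the exact firmly acyclic model structure on $\ch$, so $\class{W}$ inherits thickness from the trivial class of that model structure via the exact functor $S^0$. For a projective module $P$, the identity $\Ext^1(X,S^0 P)=\Ext^1(Z_{-1}X,P)=0$ from the proof of Lemma~\ref{lem-proj-spheres} shows $P\in \class{W}$.

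The main construction is the approximation sequence giving ``enough projectives'' for $(\class{C},\class{W})$. Given any $R$-module $M$, apply the enough-projectives side of the cotorsion pair on $\ch$ from Theorem~\ref{thm-AC-projective} to $S^0 M$ to obtain a short exact sequence
\[
0 \xrightarrow{} W' \xrightarrow{} C' \xrightarrow{} S^0 M \xrightarrow{} 0
\]
in $\ch$ with $C'$ an exact firmly acyclic complex of projectives and $W'$ trivial in that model structure. Because $(S^0 M)_i = 0$ for $i\neq 0$, we have $W'_i = C'_i$ for $i\neq 0$ and in particular $B_0 C' = d_1^{C'}(C'_1) = d_1^{W'}(W'_1) = B_0 W'$; dividing out by this common image yields
\[
0 \xrightarrow{} W'_0/B_0 W' \xrightarrow{} C'_0/B_0 C' \xrightarrow{} M \xrightarrow{} 0.
\]
Since $C'$ is exact, $C'_0/B_0 C' \cong Z_{-1}C' = Z_0(\Sigma^{-1}C')$ is Gorenstein AC-projective by definition. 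For the left term, the long exact homology sequence gives $H_i W' = 0$ for $i\geq 0$, and $W'_i = C'_i$ is projective (hence level) for $i < 0$, so Lemma~\ref{lem-proj-cycles-of-W} applied to the trivial complex $W'$ yields $W'_0/B_0 W'\in \class{W}$.

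With ``enough projectives'' established, we verify $(\class{C},\class{W})$ is in fact a cotorsion pair by checking $\class{C} = \leftperp{\class{W}}$: given $M\in \leftperp{\class{W}}$, the approximation SES above splits since $\Ext^1(M, W'_0/B_0 W') = 0$, so $M$ is a retract of the Gorenstein AC-projective $C'_0/B_0 C'$ and hence itself Gorenstein AC-projective by Lemma~\ref{lem-Gor-proj-retracts}. Salce's lemma then upgrades enough projectives to enough injectives, making the cotorsion pair complete, and Proposition~\ref{prop-how to create a projective model structure} delivers the desired model structure. The main obstacle is the bookkeeping needed to extract the approximation SES on $\rmod$ from the one on $\ch$; once one observes $B_0 C' = B_0 W'$, both the identification $C'_0/B_0 C' \cong Z_{-1}C'$ as a Gorenstein AC-projective and the membership $W'_0/B_0 W' \in \class{W}$ via the cycle lemma fall into place.
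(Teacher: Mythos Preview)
Your proof is correct and is exactly the dualization of the paper's proof of Theorem~\ref{thm-Gor-module} that the authors intend: use Lemma~\ref{lem-proj-spheres} for thickness and $\class{P}\subseteq\class{W}$, pull the chain-complex approximation of $S^0M$ down to modules via $X\mapsto X_0/B_0X$, invoke Lemma~\ref{lem-proj-cycles-of-W} for the $\class{W}$-term and Lemma~\ref{lem-Gor-proj-retracts} to close up the cotorsion pair. The only place worth a word of extra justification is the vanishing $\Ext^1(Z_{-1}X,P)=0$ for $P$ projective, which follows because $P$ is level and $X$ is firmly acyclic (so $\Hom_R(X,P)$ is exact); otherwise your write-up is more explicit than the paper's ``dual proof left to the reader.''
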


This model structure generalizes the Gorenstein projective model
structure for Gorenstein rings constructed in~\cite{hovey-cotorsion}, as well as
its generalization constructed in~\cite{gillespie-ding}.

The proof of Theorem~\ref{thm-Gor-proj-module} is dual to the proof of
Theorem~\ref{thm-Gor-module}, and so we leave it to the reader.

Just as in the injective case, we have the following lemma.

\begin{lemma}\label{lem-hereditary-proj}
For any ring $R$, the cotorsion pair $(\cat{C},\cat{W})$, where
$\cat{C}$ is the Gorenstein AC-projective modules, is hereditary, so
that $\Ext^{n} (C,W)=0$ for all $n>0$, $W\in \cat{W}$, and $C\in
\cat{C}$.  Hence the collection of Gorenstein AC-projective modules is
resolving \ulp closed under kernels of epimorphisms\urp .
\end{lemma}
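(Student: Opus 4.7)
The plan is to mirror the proof of Lemma~\ref{lem-hereditary} using dimension-shifting, exploiting the fact that every syzygy of a Gorenstein AC-projective module is again Gorenstein AC-projective. Given $C \in \cat{C}$, write $C = Z_0 X$ for some exact firmly acyclic complex of projectives $X$. Since $X$ is exact, for each $i$ we have a short exact sequence
\[
0 \xrightarrow{} Z_{i+1}X \xrightarrow{} X_{i+1} \xrightarrow{} Z_i X \xrightarrow{} 0,
\]
and each shift $\Sigma^{-i}X$ is still an exact firmly acyclic complex of projectives, so each $Z_i X$ is itself Gorenstein AC-projective. In particular these short exact sequences assemble into a projective resolution
\[
\cdots \xrightarrow{} X_3 \xrightarrow{} X_2 \xrightarrow{} X_1 \xrightarrow{} C \xrightarrow{} 0
\]
of $C$ whose $n$th syzygy is $Z_n X \in \cat{C}$.

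First I would verify hereditariness by dimension shifting against any $W \in \cat{W}$. A standard computation using the short exact sequences above gives
\[
\Ext^{n+1}(C,W) \cong \Ext^1(Z_n X, W).
\]
Since $Z_n X \in \cat{C}$ and $W \in \cat{W} = \rightperp{\cat{C}}$, the right-hand side vanishes, proving $\Ext^n(C,W)=0$ for all $n > 0$.

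For the resolving property, suppose $0 \to K \to B \to C \to 0$ is short exact with $B,C \in \cat{C}$. For any $W \in \cat{W}$, the long exact sequence in $\Ext(-,W)$ yields
\[
\Ext^1(B,W) \xrightarrow{} \Ext^1(K,W) \xrightarrow{} \Ext^2(C,W),
\]
and both outer terms vanish (the first because $B \in \cat{C}$ and $(\cat{C},\cat{W})$ is a cotorsion pair, the second by the hereditariness just established). Hence $\Ext^1(K,W)=0$ for all $W \in \cat{W}$, so $K \in \leftperp{\cat{W}} = \cat{C}$, as desired. The collection $\cat{C}$ obviously contains the projectives and is closed under extensions (being the left half of a cotorsion pair), so this closure under kernels of epimorphisms completes the proof that $\cat{C}$ is resolving.

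No step presents a serious obstacle; the only thing to be careful about is that the syzygies of $C$ are genuinely Gorenstein AC-projective, which is why one must shift the whole complex $X$ rather than only truncating it, and the dualization of indices relative to the injective case (here we shift down to access $Z_n X$ for $n\geq 0$, whereas in Lemma~\ref{lem-hereditary} the relevant syzygies were $Z_{-n+1}X$).
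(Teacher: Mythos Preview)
Your proposal is correct and follows essentially the same approach as the paper: both arguments write $C=Z_{0}X$ for an exact firmly acyclic complex of projectives $X$, observe that every $Z_{i}X$ is again Gorenstein AC-projective, and then dimension-shift along the short exact sequences $0\to Z_{i+1}X\to X_{i+1}\to Z_{i}X\to 0$ to obtain $\Ext^{n}(C,W)\cong\Ext^{1}(Z_{n-1}X,W)=0$. The paper simply asserts that ``it follows that $\cat{C}$ is resolving'' whereas you spell out the long exact sequence argument, but this is only a difference in level of detail, not in strategy.
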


\begin{proof}
Suppose $C$ is Gorenstein AC-projective, so that $F=Z_{0}X$, where $X$
is an exact firmly acyclic complex of projectives.  We have short
exact sequences
\[
0 \xrightarrow{} Z_{i}X \xrightarrow{} X_{i} \xrightarrow{}
Z_{i-1}X\xrightarrow{}0
\]
for all $i$, and each $Z_{i}X$ is also Gorenstein AC-projective.  A
simple computation then shows that
\[
\Ext^{n} (C, M) = \Ext^{1} (Z_{n-1}C, M),
\]
so $(\cat{C}, \cat{W})$ is hereditary.  It follows that $\cat{C}$ is
resolving.
\end{proof}

We then have the analogue of Theorem~\ref{thm-homotopy-Gorenstein},
whose proof is dual.

\begin{theorem}\label{thm-homotopy-Gorenstein-proj}
For any ring $R$, the homotopy category of the Gorenstein AC-projective
model structure is the quotient category of the category of Gorenstein
AC-projective modules obtained by identifying two maps when their
difference factors through a projective module.
\end{theorem}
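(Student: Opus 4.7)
The plan is to dualize the proof of Theorem~\ref{thm-homotopy-Gorenstein}, replacing path objects and right homotopies by cylinder objects and left homotopies. First I would observe that in the Gorenstein AC-projective model structure, the cofibrant objects are the Gorenstein AC-projectives $\cat{C}$ and every object is fibrant, so the bifibrant objects are precisely $\cat{C}$. Hence the homotopy category is the quotient of $\cat{C}$ by the homotopy relation.

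For one direction, every projective module $P$ lies in $\cat{W}$ since projectives are trivially cofibrant, hence trivial. So if $f-g\mathcolon M\xrightarrow{}N$ factors through a projective, it factors through a trivial object and therefore becomes zero in $\Ho\rmod$. Since $M$ and $N$ are bifibrant, equality in the homotopy category is equivalent to being homotopic, so $f$ and $g$ are homotopic.

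For the converse, suppose $f,g\mathcolon M\xrightarrow{}N$ are homotopic with $M,N \in \cat{C}$. Since $M$ is cofibrant and $N$ is fibrant, homotopy can be realized by a left homotopy through a cylinder object. I would construct such a cylinder by factoring the fold map $\nabla\mathcolon M\oplus M\xrightarrow{}M$ as a cofibration $(i_0,i_1)\mathcolon M\oplus M\xrightarrow{}M'$ followed by a trivial fibration $p\mathcolon M'\xrightarrow{}M$. Then $M'$ itself is Gorenstein AC-projective because $M\oplus M \in \cat{C}$ and $M'/(M\oplus M)\in\cat{C}$, and $\cat{C}$, as the left half of a cotorsion pair, is closed under extensions.

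The key step is to show $\ker p$ is projective rather than merely in $\cat{W}$. As a trivial fibration, $p$ has kernel in $\cat{W}$; and since $M',M\in \cat{C}$ and $\cat{C}$ is resolving by Lemma~\ref{lem-hereditary-proj}, we also have $\ker p \in \cat{C}$. Thus $\ker p \in \cat{C}\cap \cat{W}$, which by Proposition~\ref{prop-how to create a projective model structure} is exactly the class of projectives. Finally, given a left homotopy $H\mathcolon M'\xrightarrow{}N$ with $H i_0=f$ and $H i_1=g$, set $\iota = i_0-i_1\mathcolon M\xrightarrow{}M'$. Then $p\iota = \mathrm{id}_M-\mathrm{id}_M=0$, so $\iota$ factors through $\ker p$, whence $f-g = H\iota$ factors through the projective module $\ker p$. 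The main obstacle, exactly parallel to the injective case, is producing a cylinder object whose trivial fibration has projective kernel, and this is precisely where the hereditary property of the cotorsion pair from Lemma~\ref{lem-hereditary-proj} is indispensable.
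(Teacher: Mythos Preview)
Your proof is correct and is precisely the dualization the paper intends: the paper does not write out a proof here but simply states that it is dual to Theorem~\ref{thm-homotopy-Gorenstein}, and your argument carries out exactly that dualization, replacing the path-object factorization $N\xrightarrow{i}N'\xrightarrow{p}N\times N$ by the cylinder-object factorization $M\oplus M\xrightarrow{(i_0,i_1)}M'\xrightarrow{p}M$, and invoking Lemma~\ref{lem-hereditary-proj} (resolving) in place of Lemma~\ref{lem-hereditary} (coresolving) to conclude $\ker p\in\cat{C}\cap\cat{W}=\class{P}$.
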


Just as in the injective case, the Gorenstein AC-projective model
structure and the exact firmly acyclic model structure are
Quillen equivalent.

\begin{theorem}\label{thm-totally-Gorenstein-proj}
For any ring $R$, the functor $F\mathcolon \ch \xrightarrow{}\rmod $
defined by $F (X)=X_{0}/B_{0}X$ is a Quillen equivalence from the
exact firmly acyclic model structure to the the Gorenstein
AC-projective model structure.
\end{theorem}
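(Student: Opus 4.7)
The strategy is to dualize the proof of Theorem~\ref{thm-totally-Gorenstein}. The adjunction $F \dashv S^{0}$ is immediate: a chain map $X \to S^{0}M$ must vanish in every degree except $0$, and in degree $0$ corresponds to a map $X_{0} \to M$ that annihilates $B_{0}X$. The plan is to verify (i) that $F$ is left Quillen, (ii) that $F$ reflects weak equivalences between cofibrant objects, and (iii) that for every $R$-module $M$ the derived counit $F Q S^{0}M \to M$ is a weak equivalence, where $Q$ denotes cofibrant replacement in the exact firmly acyclic model structure. Together these give a Quillen equivalence by the dual of Corollary~1.3.16 of~\cite{hovey-model}.

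For (i), let $f\colon X \to Y$ be a cofibration with cokernel $Z$, so $Z$ is an exact firmly acyclic complex of projectives. The snake lemma applied to the short exact sequences in degrees $1$ and $0$ with vertical maps $d_{1}$ produces
\[
0 \to Z_{1}X \to Z_{1}Y \to Z_{1}Z \to F X \to F Y \to F Z \to 0.
\]
The connecting map $Z_{1}Z \to FX$ vanishes because $Z$ is exact: any $c \in Z_{1}Z$ is $d c'$ for some $c' \in Z_{2}$, and lifting $c'$ through the split surjection $Y_{2} \twoheadrightarrow Z_{2}$ to some $y'' \in Y_{2}$ produces a preimage $y = d y''$ of $c$ in $Y_{1}$ with $dy = 0$. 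Hence $0 \to FX \to FY \to FZ \to 0$ is short exact, and $FZ = Z_{-1}Z$ is Gorenstein AC-projective by definition, so $F$ preserves cofibrations. A trivially cofibrant $Z$ is a projective complex, i.e., a direct sum of disks $D^{n}(P)$, and a short computation gives $F(D^{n}(P)) = P$ when $n = 0$ and $0$ otherwise, so $F Z$ is a direct sum of projectives; thus $F$ sends trivial cofibrations to trivial cofibrations.

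For (ii), by the standard factorization argument (using that $F$ preserves weak equivalences between cofibrant objects via Ken Brown's lemma), it suffices to reflect weak equivalences on cofibrations $f\colon X \hookrightarrow Y$ of cofibrant objects. The cokernel $Z$ is an exact firmly acyclic complex of projectives, and part (i) produces the short exact sequence $0 \to FX \to FY \to FZ \to 0$. If $F(f)$ is a weak equivalence then $FZ \in \cat{W}$. Since $Z$ is exact and each $Z_{i}$ is projective (so level), the hypotheses of Lemma~\ref{lem-proj-cycles-of-W} hold, and the lemma forces $Z$ itself to be trivial in the exact firmly acyclic model structure. Hence $f$ is a weak equivalence.

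For (iii), given $M$, take the cofibrant replacement
\[
0 \to W \to Q S^{0}M \to S^{0}M \to 0
\]
coming from the complete cotorsion pair $(\cat{C}, \cat{W})$, so that $Q S^{0}M \in \cat{C}$ and $W \in \cat{W}$. Applying the snake lemma to the degree $1$ and $0$ rows (using $(S^{0}M)_{1} = 0$) produces the exact sequence
\[
0 \to F W \to F Q S^{0}M \to M \to 0.
\]
The long exact homology sequence of the defining resolution of $S^{0}M$ gives $H_{-1}(W) \cong M$ and $H_{i}(W) = 0$ for $i \neq -1$, so in particular $H_{i} W = 0$ for $i > 0$. Since each $W_{i} = (Q S^{0}M)_{i}$ is projective, hence level, for $i \neq 0$, Lemma~\ref{lem-proj-cycles-of-W} applies, and since $W$ is already trivial by construction we conclude $F W \in \cat{W}$. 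Therefore $F Q S^{0}M \to M$ is a weak equivalence. The main obstacle throughout is organizing each snake-lemma input so that the merely right-exact functor $F$ delivers honest short exact sequences, and then checking that the degreewise and homological hypotheses of Lemma~\ref{lem-proj-cycles-of-W} are met each time it is invoked.
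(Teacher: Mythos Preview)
Your proof is correct and follows the same overall strategy as the paper: verify the Quillen adjunction, show $F$ reflects weak equivalences between cofibrant objects, and check the derived counit is a weak equivalence via Lemma~\ref{lem-proj-cycles-of-W}. The only notable difference is in step~(i): the paper takes the shorter route of checking that the right adjoint $S^{0}$ is right Quillen (it is exact, hence preserves fibrations, and Lemma~\ref{lem-proj-spheres} gives preservation of trivial fibrations), whereas you verify directly that $F$ is left Quillen via snake-lemma computations; similarly, for step~(ii) the paper reduces to \emph{fibrations} between cofibrant objects rather than cofibrations, but either reduction works.
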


\begin{proof}
The right adjoint of $F$ is $S^{0}$, which is exact and so obviously
preserves fibrations.  In fact, $S^{0}$ also preserves trivial
fibrations by Lemma~\ref{lem-proj-spheres}.  So $S^{0}$ is a right
Quillen functor.  To complete the proof, we will show that $F$
reflects weak equivalences between cofibrant objects and that the
natural map $FCS^{0}M\xrightarrow{}S^{0}M$ is a weak equivalence for
all modules $M$, where $C$ denotes cofibrant replacement in the exact
firmly acyclic model structure.  In view of Corollary~1.3.16
of~\cite{hovey-model}, this will complete the proof.

We will now show that $F$ reflects weak equivalences between cofibrant
objects.  By factoring any map between cofibrant objects into a
trivial cofibration followed by a fibration, we see that it suffices
to show that if $f\mathcolon X\xrightarrow{}Y$ is a fibration of
cofibrant objects such that $Ff$ is a weak equivalence, then $f$ is a
trivial fibration.  So we are given a short exact sequence
\[
0 \xrightarrow{} K \xrightarrow{} X \xrightarrow{f} Y \xrightarrow{} 0
\]
with $X$ and $Y$ exact firmly acyclic complexes of projectives.  This
sequence is necessarily degreewise split, so $K$ is also an exact
firmly acyclic complex of projectives.  The functor $F$ is right exact but
not exact, but since $Y$ is exact we do get a short exact sequence
\[
0 \xrightarrow{} K_{0}/B_{0}K \xrightarrow{} X_{0}/B_{0}X
\xrightarrow{} Y_{0}/B_{0}Y \xrightarrow{} 0.
\]
Since $Ff$ is a weak equivalence, $K_{0}/B_{0}K$ is in $\cat{W}$.
Lemma~\ref{lem-proj-cycles-of-W} then implies that $K$ is trivial in
the exact firmly acyclic model structure, so $f$ is a trivial
fibration.

Now take any module $M$, and let $CS^{0}M$ be a cofibrant replacement
for $S^{0}M$, so that we have a short exact sequence
\[
0\xrightarrow{} Y \xrightarrow{} CS^{0}M \xrightarrow{} S^{0}M
\xrightarrow{} 0
\]
with $Y$ trivial in the exact firmly acyclic model category.
Because $M$ is concentrated in degree $0$, one can check that we get
an exact sequence
\[
0 \xrightarrow{} FY \xrightarrow{} FCS^{0}M \xrightarrow{} M
\xrightarrow{} 0
\]
Furthermore, $Y_{i}$ is projective for all $i\neq 0$ and $H_{i}Y=0$ for
all $i\neq 0$, so Lemma~\ref{lem-cycles-of-W} implies that $FY\in
\class{W}$.  Hence $FCS^{0}M\xrightarrow{}M$ is a weak
equivalence.
\end{proof}

The functor
\[
\gamma : \rmod \xrightarrow{}\Ho \rmod
\]
to the homotopy category of the Gorenstein AC-projective model
structure is an exact functor to a triangulated category that
preserves products and sends all level modules and all injective
modules to $0$.

Just as in the injective case, it also is initial in the following
sense.

\begin{proposition}\label{prop-initial-proj}
The homotopy category of the Gorenstein AC-projective model structure
is initial among all triangulated categories with an exact functor
from $\rmod$ that preserves products and sends all elements of
$\cat{W}$ to zero.
\end{proposition}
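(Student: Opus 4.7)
The plan is to mirror the proof of Proposition~\ref{prop-initial} on the opposite side. Suppose $F\mathcolon \rmod \xrightarrow{}\cat{C}$ is an exact, product-preserving functor into a triangulated category such that $F(W)=0$ for every $W\in \cat{W}$. I want to show that $F$ extends uniquely through $\gamma \mathcolon \rmod \xrightarrow{}\Ho \rmod$ to an exact product-preserving functor $\overline{F}$ on the homotopy category of the Gorenstein AC-projective model structure.

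The key observation is that both trivial cofibrations and trivial fibrations in this model structure fit into short exact sequences whose remaining term lies in $\cat{W}$. Indeed, by Proposition~\ref{prop-how to create a projective model structure} we have $\cat{C}\cap \cat{W}=\class{P}$, the class of projective modules, and $\class{P}\subseteq \cat{W}$ was built into the hypotheses of that proposition. So a trivial cofibration is a monomorphism with projective cokernel (hence cokernel in $\cat{W}$), while a trivial fibration is an epimorphism with kernel in $\cat{R}\cap \cat{W}=\cat{W}$.

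First I would use exactness of $F$: any short exact sequence $0\to A\to B\to C\to 0$ in $\rmod$ maps to an exact triangle in $\cat{C}$, and if any of $A,B,C$ lies in $\cat{W}$ the corresponding vertex of the triangle is $0$, so the two-out-of-three property of triangles makes the remaining map an isomorphism. Applied to the two exact sequences above, this shows $F$ sends every trivial cofibration and every trivial fibration to an isomorphism. Since every weak equivalence in a model category factors as a trivial cofibration followed by a trivial fibration, $F$ carries all weak equivalences to isomorphisms.

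Finally, the universal property of the homotopy category as the localization at weak equivalences yields a unique functor $\overline{F}\mathcolon \Ho \rmod \xrightarrow{}\cat{C}$ with $\overline{F}\gamma =F$. Exactness of $\overline{F}$ follows because every exact triangle in $\Ho \rmod$ is (up to isomorphism) the image of a short exact sequence of $R$-modules, and product preservation is inherited from $F$ together with the fact that products in $\Ho \rmod$ are computed from products in $\rmod$ (every object being fibrant here). The only mild subtlety is checking that trivial cofibrations have cokernel in $\cat{W}$, which is where one uses $\class{P}\subseteq \cat{W}$; otherwise the argument is formally dual to Proposition~\ref{prop-initial}.
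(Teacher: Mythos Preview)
Your argument is correct and follows exactly the approach the paper takes: the paper simply says ``The proof is exactly the same as in the injective case,'' and your write-up is precisely the dual of Proposition~\ref{prop-initial}---show that $F$ inverts trivial cofibrations and trivial fibrations because each sits in a short exact sequence with third term in $\cat{W}$, conclude that $F$ inverts all weak equivalences, and then invoke the universal property of the localization. Your additional remarks on why $\overline{F}$ remains exact and product-preserving are more than the paper spells out, but they are correct and harmless.
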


The proof is exactly the same as in the injective case.

Unfortunately, as in the injective case, we do not what $\cat{W}$ is.
Ideally, it would be the smallest thick subcategory closed under
products that contains the level and injective modules, but we do not
know if this is true.

We do know that $(\cat{C},\cat{W})$ is cogenerated by a set though.

\begin{proposition}\label{prop-cogenerated-proj}
For any ring $R$, the cotorsion pair $(\cat{C},\cat{W})$, where
$\cat{C}$ is the class of Gorenstein AC-projectives, is cogenerated by
a set.  Thus the Gorenstein AC-projective model structure is
cofibrantly generated.
\end{proposition}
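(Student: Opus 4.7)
The plan is to mirror the argument used in the injective case in Proposition~\ref{prop-cogenerated}, transporting a set-cogeneration result from $\ch$ to $\rmod$ via the Quillen equivalence $F \dashv S^0$ of Theorem~\ref{thm-totally-Gorenstein-proj}. The cotorsion pair $(\cat{C}',\cat{W}')$ on $\ch$ underlying the exact firmly acyclic model structure is cogenerated by a set, built directly into the construction of that model structure (Theorem~\ref{thm-how to create projective on chain}). Applying Lemma~\ref{lem-stovicek} to this pair yields a set $S'\subseteq \cat{C}'$ such that every exact firmly acyclic complex of projectives is a transfinite extension of members of $S'$, with no retracts needed. Define $S = \{F(X) : X\in S'\}$, where $F(X) = X_0/B_0X$. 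For $X\in S'$ exact, we have $F(X) = B_{-1}X = Z_{-1}X$, which is Gorenstein AC-projective by definition, so $S\subseteq \cat{C}$.

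Next I would show that every Gorenstein AC-projective is itself a transfinite extension of modules in $S$. Given $M\in \cat{C}$, write $M = F(X)$ for some $X\in \cat{C}'$, and write $X = \colim_{\alpha<\lambda} X_\alpha$ as a transfinite extension with $X_0, X_{\alpha+1}/X_\alpha \in S'$. Each $X_\alpha$ itself lies in $\cat{C}'$, since $\cat{C}'$ is closed under transfinite extensions. For every $\alpha$, the short exact sequence
\[
0 \xrightarrow{} X_\alpha \xrightarrow{} X_{\alpha+1} \xrightarrow{} X_{\alpha+1}/X_\alpha \xrightarrow{} 0
\]
is degreewise split and consists of exact complexes; a short diagram chase (which uses only that $X_{\alpha+1}/X_\alpha$ is exact in degree $-1$) shows that the right-exact functor $F$ sends this to a short exact sequence
\[
0 \xrightarrow{} F(X_\alpha) \xrightarrow{} F(X_{\alpha+1}) \xrightarrow{} F(X_{\alpha+1}/X_\alpha) \xrightarrow{} 0.
\]
Since $F$ is a left adjoint, it preserves colimits, so $M = F(X) = \colim F(X_\alpha)$ presents $M$ as a transfinite extension of Gorenstein AC-projectives with successive quotients in $S$.

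To conclude I would invoke Eklof's lemma: if $N\in S^\perp$, then $\Ext^1(C,N)=0$ for every transfinite extension $C$ of members of $S$, hence for every $C\in \cat{C}$. Therefore $S^\perp \subseteq \cat{C}^\perp = \cat{W}$, and the reverse inclusion is immediate from $S\subseteq \cat{C}$, giving $\cat{W} = S^\perp$. The cotorsion pair $(\cat{C},\cat{W})$ is thus cogenerated by the set $S$, and the standard small object argument from~\cite{hovey-cotorsion} upgrades this to cofibrant generation of the Gorenstein AC-projective model structure. The main delicacy is the middle step: $F$ is in general only right exact on $\ch$, but on short exact sequences of exact complexes it is exact, which is exactly the situation occurring in our filtration — so the Eklof-style conclusion survives.
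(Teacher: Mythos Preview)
Your argument is correct and is precisely the dualization of Proposition~\ref{prop-cogenerated} that the paper leaves to the reader: deconstruct $\cat{C}'$ via Lemma~\ref{lem-stovicek}, then push the resulting filtration down to $\rmod$, using the left adjoint $F$ in place of $S^{0}$ since in the projective case it is $\cat{C}$ rather than $\cat{W}$ that sits on the left of the module-level cotorsion pair. One small correction to your parenthetical: the snake lemma applied to $d_{1}$ shows that left exactness of $F$ on $0\to X_{\alpha}\to X_{\alpha+1}\to X_{\alpha+1}/X_{\alpha}\to 0$ needs $H_{1}(X_{\alpha+1}/X_{\alpha})=0$, not exactness in degree $-1$ --- but since every complex in $S'\subseteq\cat{C}'$ is exact this does not affect the argument.
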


The proof of this proposition is very similar to the proof of
Proposition~\ref{prop-cogenerated}, so we leave it to the reader.

There is a relationship between the Gorenstein AC-projective and the
Gorenstein AC-injective model structures.

\begin{proposition}\label{prop-relation}
For any ring $R$, the identity functor is a left Quillen functor from
the Gorenstein AC-projective model structure to the Gorenstein
AC-injective model structure.  It is a Quillen equivalence when $R$ is
Gorenstein.
\end{proposition}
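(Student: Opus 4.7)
The plan is to address the two assertions separately. For the identity to be a left Quillen functor from the Gorenstein AC-projective model structure to the Gorenstein AC-injective model structure, I verify that cofibrations and trivial cofibrations are preserved. By the characterization following Theorem~\ref{them-Hoveys correspondence}, a cofibration in the Gorenstein AC-projective structure is a monomorphism with Gorenstein AC-projective cokernel, while a cofibration in the Gorenstein AC-injective structure is merely a monomorphism, so preservation is immediate. A trivial cofibration in the Gorenstein AC-projective structure is a monomorphism with projective cokernel, while a trivial cofibration in the Gorenstein AC-injective structure is a monomorphism whose cokernel lies in $\leftperp{\cat{F}}$, where $\cat{F}$ is the class of Gorenstein AC-injectives. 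Since $\Ext^{1}(P,-)=0$ for any projective module $P$, every projective automatically lies in $\leftperp{\cat{F}}$, and so trivial cofibrations are preserved as well.

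For the Quillen equivalence when $R$ is Gorenstein, the plan is to reduce to the main theorem of~\cite{hovey-cotorsion}, which establishes that the classical Gorenstein projective and Gorenstein injective model structures on $\rmod$ are Quillen equivalent via the identity. Since a Gorenstein ring is Noetherian on both sides with finite self-injective dimension, it is a classical fact that every flat $R$-module has finite projective dimension. Because $R$ is right coherent, Corollary~\ref{cor-level-coherent} identifies level modules with flat modules, so every level module has finite projective dimension. Theorem~\ref{thm-Inj-exact-level} then implies that a complex of projectives is firmly acyclic if and only if $\Hom(C,P)$ is exact for all projective $P$; combined with exactness this is the classical notion of totally acyclic, so the Gorenstein AC-projective modules coincide with the classical Gorenstein projective modules. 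Dually, Theorem~\ref{thm-FP-injective} gives that for left Noetherian $R$ the exact AC-acyclic model structure on $\ch$ coincides with the totally acyclic injective model structure, so the Gorenstein AC-injective modules coincide with the classical Gorenstein injective modules. Thus, in the Gorenstein case, the two model structures appearing in the statement literally \emph{are} the Gorenstein projective and Gorenstein injective model structures of~\cite{hovey-cotorsion}, and the Quillen equivalence follows.

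The main obstacle here is bookkeeping rather than conceptual: one must assemble the several identifications of AC-acyclic, Inj-acyclic, firmly acyclic, and totally acyclic complexes in the Gorenstein case (using Theorem~\ref{thm-Inj-exact-level}, Theorem~\ref{thm-FP-injective}, and Proposition~\ref{prop-Gor}) and verify that the resulting cotorsion pairs on $\rmod$ coincide exactly with those of~\cite{hovey-cotorsion}, rather than merely being Quillen equivalent to them in some weaker sense. Once this matching is in place, both parts of the proposition follow without further work.
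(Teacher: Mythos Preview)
Your proof is correct and follows essentially the same route as the paper. The only minor difference is that the paper verifies the Quillen adjunction by checking that the identity preserves cofibrations and that its right adjoint (again the identity) preserves fibrations, whereas you check preservation of cofibrations and trivial cofibrations; these are equivalent standard criteria. For the Gorenstein case, both you and the paper reduce to~\cite{hovey-cotorsion}, with your version simply spelling out in more detail why the AC-model structures agree with the classical ones.
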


\begin{proof}
It is clear that the identity functor takes cofibrations in the
Gorenstein AC-projective model structure, which are certain
monomorphisms, to cofibrations in the Gorenstein AC-injective model
structure, which are all mononmorphisms.  Similarly, it takes
fibrations in the Gorenstein AC-injective model structure to
fibrations in the Gorenstein AC-projective model structure.  Together,
these make it a left Quillen functor as required.

When $R$ is Gorenstein, these model structure coincide with the ones
constructed in~\cite{hovey-cotorsion}, where it is shows that they are Quillen
equivalent.
\end{proof}

\appendix
\section{Complexes of projectives}\label{sec-appendix}

The object of this section is to prove
Theorem~\ref{thm-Inj-exact-level}, which we will restate below as
Corollary~\ref{cor-Inj-exact-level}, and some related theorems.  To
prove these results, we will use the following theorem of independent
interest.

\begin{theorem}\label{thm-pure}
Let $R$ be a ring and let $C$ be a complex of projective $R$-modules.
If $Y$ is a pure exact complex of $R$-modules, then $\homcomplex (C,Y)$
is exact, or, equivalently, $\Ext^{1}_{\ch} (C,Y)=0$.  Similarly, if $Z$ is
a pure exact complex of right $R$-modules, then $Z\otimes_{R}C$ is
exact.  
\end{theorem}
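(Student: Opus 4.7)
The plan is to prove both claims by reducing to the vanishing of $\Ext^1_{\ch}(C,Y)$ on small complexes of projectives, handled via the continuous filtration machinery just developed, and then deducing the tensor statement by passing to character modules. The equivalence of exactness of $\homcomplex(C,Y)$ with $\Ext^1_{\ch}(C,Y) = 0$ is immediate from Lemma~\ref{lemma-homcomplex-basic-lemma}, using that every short exact sequence ending in a degreewise projective complex is degreewise split, so $\Ext^1_{dw}(C,Y) = \Ext^1_{\ch}(C,Y)$.

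For the first statement, I would repeat the argument of Theorem~\ref{theorem-filtrations for complexes of projectives} using only the Covering Lemma~\ref{lemma-covering lemma} (no tensor-exactness constraint needs to be maintained, so the Exact Covering Lemma plays no role) to write $C = \bigcup_{\alpha < \lambda} Q_{\alpha}$ as a continuous union of subcomplexes of projectives with each $Q_{\alpha}$ and each $Q_{\alpha+1}/Q_{\alpha}$ a complex of projectives of cardinality less than a fixed regular cardinal $\kappa > \max(|R|, \omega)$. Eklof's lemma then reduces the vanishing of $\Ext^1_{\ch}(C,Y)$ to the vanishing of $\Ext^1_{\ch}(Q,Y)$ for each such small $Q$. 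For this small case, I use that a pure exact complex is a direct limit of its split exact (hence contractible) subcomplexes, a presentation that can be refined to a $\kappa$-filtered colimit of contractible subcomplexes of $Y$. Since $|Q| < \kappa$, the complex $Q$ is $\kappa$-presentable in $\ch$, so any chain map $f \mathcolon Q \to Y$ factors through a contractible subcomplex of $Y$ and is therefore null-homotopic. By Lemma~\ref{lemma-homcomplex-basic-lemma} this gives $\Ext^1_{\ch}(Q,Y) = 0$.

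The second statement reduces to the first via character modules. The adjunction yields a natural isomorphism $(Z \otimes_R C)^+ \cong \homcomplex(C, Z^+)$ of complexes of abelian groups. By the Lambek criterion --- a short exact sequence of modules is pure if and only if its character dual is split exact --- applying $(-)^+$ degreewise to the (pure) cycle short exact sequences of $Z$ shows that $Z^+$ is a pure exact complex of left $R$-modules. The first statement then gives that $\homcomplex(C, Z^+)$ is exact, whence $(Z \otimes_R C)^+$ is exact, and since $(-)^+$ reflects exactness of complexes of abelian groups, $Z \otimes_R C$ is exact.

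The main obstacle is the small case: presenting a pure exact complex as a $\kappa$-filtered colimit of contractible subcomplexes and verifying the corresponding presentability of small complexes of projectives sharply enough that a chain map from $Q$ factors. This is precisely where the pure exactness of $Y$ is used essentially --- bare acyclicity would not suffice, since for an unbounded complex of projectives the naive degreewise inductive construction of a null-homotopy from the lifting property of $C_n$ has no natural base case, and the filtered colimit presentation of pure exact complexes is what supplies one.
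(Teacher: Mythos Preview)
Your reduction for the tensor statement via character modules is sound, and in fact stronger than you state: if $Z$ is pure exact, then each short exact sequence $0 \to Z_{n}Z \to Z_{n} \to Z_{n-1}Z \to 0$ is pure, so its character dual is \emph{split}. Hence $Z^{+}$ is not merely pure exact but contractible, and $\homcomplex(C,Z^{+})$ is exact for trivial reasons. So that half stands on its own without appealing to the first half.

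The first half, however, has a real gap. The entire argument rests on the assertion that a pure exact complex $Y$ is a ($\kappa$-filtered) direct limit of contractible \emph{subcomplexes}, and you do not prove this. It is not obvious: the classical fact is that a pure short exact sequence of modules is a filtered colimit of split short exact sequences, but assembling such degreewise presentations coherently into a presentation of the whole unbounded complex as a colimit of contractible complexes is a genuine theorem, and insisting that the terms be subcomplexes of $Y$ is stronger still and likely false in general. You flag this as ``the main obstacle'' but then treat it as given. Without it, the factorisation of $Q \to Y$ through a contractible stage has no justification, and the argument collapses.

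The paper takes a different route that avoids this difficulty entirely. Rather than filtering $C$ by cardinality and invoking presentability, it uses Theorem~\ref{thm-bounded above complexes of finitely generated frees cogenerate} to reduce to $C$ a \emph{bounded above} complex of finitely generated free modules. The boundedness supplies the missing base case for a downward induction: one builds the null-homotopy $D_{n}$ degree by degree, and at each step isolates a small \emph{bounded} complex of finitely presented modules (involving $C_{n}/B_{n}C$, $C_{n-1}$, and $C_{n-1}/B_{n-1}C$) whose map to $Y$ is null-homotopic by Lemma~\ref{lem-pure}, which handles the bounded finitely presented case directly from the definition of purity. This is more hands-on than your proposed strategy, but it uses only the elementary characterisation of purity and does not require any structural theorem about pure acyclic complexes.
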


This theorem generalizes Neeman's result~\cite{neeman-flat} that $\Ext^{1}
(C,Y)=0$ if $C$ is a complex of projectives and $Y$ is a complex of
flat modules with flat cycles, as such $Y$ are automatically pure.
However, Neeman also proves that such $Y$ are the only complexes of
flat modules with $\Ext^{1} (C,Y)=0$ for all complexes of projectives
$X$.  The generalization of this fact is untrue; it is easy to see by
induction that every chain map from a complex of projectives to a
bounded below exact complex is chain homotopic to $0$, and not all
bounded below exact complexes are pure exact.

Our first goal is to reduce the study of all complexes of projectives
to a manageable set of them.

\begin{lemma}\label{lemma-complexes of projectives are retracts of complexes of frees}
For any ring $R$, let $P$ be a complex of projective $R$-modules. Then
$P$ is a retract of a complex $F$ of free modules. Furthermore, if $P$
is exact then $F$ can be taken to be exact.
\end{lemma}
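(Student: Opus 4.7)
The plan is to exhibit $F$ as a direct sum $P \oplus C$ for a well-chosen exact complex $C$ of free modules, engineered so that each $F_n$ itself ends up free. Once this is achieved, the categorical inclusion and projection of the first summand in $\ch$ are automatically chain maps whose composite is the identity, so $P$ is a retract of $F$; and taking $C$ exact will make $F$ exact whenever $P$ is.

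The construction proceeds as follows. For each $n$, choose $Q_n$ with $P_n \oplus Q_n$ free. An Eilenberg swindle---replacing $Q_n$ by the countable direct sum $Q_n \oplus (P_n \oplus Q_n) \oplus (P_n \oplus Q_n) \oplus \cdots$---lets us assume each $Q_n$ is itself free, while preserving the property that $P_n \oplus Q_n$ is free. Next I would set
\[
C = \bigoplus_{n \in \Z} D^n(Q_n),
\]
a direct sum of disk complexes. Each $D^n(Q_n)$ is contractible (hence exact) and consists of free modules in every degree, so $C$ is an exact complex of free modules with $C_k = Q_k \oplus Q_{k+1}$. Taking $F = P \oplus C$ therefore gives $F_k = P_k \oplus Q_k \oplus Q_{k+1}$, a direct sum of free modules and hence free, as required.

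Finally, the inclusion $P \hookrightarrow F$ and the projection $F \twoheadrightarrow P$ onto the first summand exhibit $P$ as a retract of $F$ in $\ch$. When $P$ is exact, $F = P \oplus C$ is a direct sum of two exact complexes, hence exact. The one step that requires real thought is the Eilenberg swindle upgrade of $Q_n$ from projective to free; without it, we would only produce an $F_n$ that is projective, not free, defeating the purpose. Everything else is routine verification: the disk complexes supply a contractible, degreewise-free complement that converts a splitting of each $P_n$ into a splitting at the level of complexes.
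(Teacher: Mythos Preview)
Your proof is correct and follows essentially the same approach as the paper: both use the Eilenberg swindle degreewise and then adjoin a direct sum of disk complexes $\bigoplus_{n} D^{n}(-)$ to obtain a complex of free modules containing $P$ as a direct summand. The only cosmetic difference is that the paper phrases the swindle as producing a free $F_n$ with $P_n \oplus F_n \cong F_n$, whereas you phrase it as upgrading the complement $Q_n$ to be free while keeping $P_n \oplus Q_n$ free; these are equivalent maneuvers.
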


\begin{proof}
Recall Eilenberg's swindle (Corollary~2.7 of~\cite{lam}) allows one to
construct, for any projective module $P$ a free module $F$ such that
$P \oplus F \cong F$. Given a complex of projectives $P$, we use the
swindle to find for each $P_n$ a free $F_n$ such that $P_n \oplus F_n
\cong F_n$. Then $P \oplus (\oplus_{n \in \Z} D^n(F_n))$ is a complex
of free modules. Indeed in degree $n$ the complex equals $P_n \oplus
F_n \oplus F_{n+1} \cong F_n \oplus F_{n+1}$ which is free. Of course
$P$ is a retract of $P \oplus (\oplus_{n \in \Z} D^n(F_n))$ by
construction and also $P \oplus (\oplus_{n \in \Z} D^n(F_n))$ is exact
whenever $P$ is exact.
\end{proof}

\begin{theorem}\label{thm-bounded above complexes of finitely generated frees cogenerate}
For any ring $R$, the cotorsion pair $(\class{C},\class{W})$, where
$\class{C}$ is the class of all complexes of projective modules, is
cogenerated by the collection of all bounded above complexes of
finitely generated free modules.
\end{theorem}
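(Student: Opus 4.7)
The plan is to show $\rightperp{S} = \class{W}$, where $S$ is a set of representatives for the isomorphism classes of bounded above complexes of finitely generated free modules. Since $S \subseteq \class{C}$, the inclusion $\class{W} = \rightperp{\class{C}} \subseteq \rightperp{S}$ is immediate, so the content is the reverse inclusion: given $Y \in \rightperp{S}$, I must show $\Ext^1_{\ch}(C,Y)=0$ for every complex of projectives $C$. By Lemma~\ref{lemma-complexes of projectives are retracts of complexes of frees}, $C$ is a retract of a complex $F$ of free modules, and since the class of complexes $X$ with $\Ext^1_{\ch}(X,Y)=0$ is closed under direct summands, it suffices to treat the case where $C=F$ is itself a complex of free modules.

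The main step is to realize $F$ as a continuous transfinite extension by subcomplexes whose successive quotients lie in $S$, so that Eklof's lemma in the Grothendieck category $\ch$ concludes the argument. For this I would fix, for each $n$, a free basis of $F_n$, and well-order the disjoint union of these bases as $(e_\beta)_{\beta<\mu}$. Call a set $H$ of basis elements \emph{differentially closed} if for every $e \in H$ every basis element appearing in the support of $d(e)$ also lies in $H$; equivalently, the free submodule $R \cdot H \subseteq F$ is itself a subcomplex. For each ordinal $\alpha \le \mu$ let $H_\alpha$ be the smallest differentially closed set containing $\{e_\beta : \beta < \alpha\}$ and set $F_\alpha := R \cdot H_\alpha$. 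Then $F_0 = 0$, $F_\mu = F$, the assignment $\alpha \mapsto F_\alpha$ is continuous at limit ordinals (an increasing union of differentially closed sets is differentially closed), and each successive quotient $F_{\alpha+1}/F_\alpha$ is the free complex on $H_{\alpha+1} \setminus H_\alpha$, with differential induced from $F$.

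The crucial observation is that $H_{\alpha+1} \setminus H_\alpha$ is contained in the differential closure of the single basis element $e_\alpha$, and this closure is bounded above and finite in each degree. Indeed, if $e_\alpha$ lies in degree $n$, then since $d$ lowers degree by one a straightforward induction shows the closure is concentrated in degrees $\le n$, contains exactly $\{e_\alpha\}$ in degree $n$, the finite support of $d(e_\alpha)$ in degree $n-1$, and inductively a finite set in each degree $n-j$ (namely the union of the finitely many supports of $d$ applied to the finite set previously produced in degree $n-j+1$). Consequently $F_{\alpha+1}/F_\alpha$ is a bounded above complex of finitely generated free modules, lies in $S$, and so satisfies $\Ext^1_{\ch}(F_{\alpha+1}/F_\alpha, Y)=0$ by hypothesis. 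Eklof's lemma then yields $\Ext^1_{\ch}(F,Y)=0$, completing the proof. The main obstacle is constructing the filtration so that each successive quotient actually belongs to $S$; the key input is that $d$ has finite support on basis elements and strictly decreases degree, which ensures that the orbit of a single basis element under iterated $d$-support stays finite in every degree.
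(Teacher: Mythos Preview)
Your proof is correct and follows essentially the same approach as the paper: reduce to complexes of free modules via Lemma~\ref{lemma-complexes of projectives are retracts of complexes of frees}, then filter any such complex so that each successive quotient is the ``differential closure'' of a single basis element, which is bounded above and degreewise finitely generated free. The paper phrases this as showing $\class{C} = \leftperp{(\rightperp{\class{S}})}$ using the characterization of the left class as retracts of transfinite extensions, whereas you phrase it as $\rightperp{S} = \class{W}$ and invoke Eklof's lemma directly; these are equivalent formulations of the same argument, and your explicit construction of the filtration via well-ordering the basis and taking differential closures is a cleaner account of what the paper sketches by pointing to Lemma~\ref{lemma-covering lemma} and Step~2 of Theorem~\ref{theorem-filtrations for complexes of projectives}.
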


\begin{proof}
 Let $\class{S}$ be the set of bounded above complexes of finitely
generated free modules. It cogenerates a cotorsion pair
$(\leftperp{(\rightperp{\class{S}})}, \rightperp{\class{S}})$ and we
wish to show $\class{C} = \leftperp{(\rightperp{\class{S}})}$. Since
$\class{S}$ contains a set of projective generators we know that
$\leftperp{(\rightperp{\class{S}})}$ is precisely the class of all
retracts of transfinite extensions of objects in
$\class{S}$. (Although complexes of projective modules are not closed
under all direct limits, they are closed under transfinite
compositions, just as projective modules are).  Since
Lemma~\ref{lemma-complexes of projectives are retracts of complexes of
frees} tells us that any complex of projectives is a retract of a
complex of free modules we only need to show that a complex of free
modules is a transfinite composition of bounded above complexes of
finitely generated free modules.  So let $F$ be a complex of free
modules and write each $F_n = \oplus_{i \in I_n} R_i$ for some $I_n$
and each $R_i = R$. We do a simplified version of the argument in
Lemma~\ref{lemma-covering lemma}. Assuming $F$ is nonzero we can find
a nonzero $F_n$ and we take just one summand $R_j$ for some $j \in
I_n$. We start to build a bounded above subcomplex $X \subseteq F$ by
setting $X_n = R_j$ and setting $X_i = 0$ for all $i > n$. Now note
$d(R_j) \subseteq \oplus_{i \in I_{n-1}} R_i$ and set $L_{n-1} = \{ \,
i \in I_{n-1} \, | \, d(R_j) \cap \oplus_{i \in I_{n-1}} R_i \neq 0 \,
\}$. We set $X_{n-1} = \oplus_{i \in L_{n-1}} R_i$ and note that
$|L_{n-1}|$ must be finite.  We can continue down in the same way
finding $L_{n-2} \subseteq I_{n-2}$ with $|L_{n-2}|$ finite and with
$d(\oplus_{i \in L_{n-1}} R_i) \subseteq \oplus_{i \in L_{n-2}}
R_i$. In this way we get a subcomplex of $X$: $$X^n = \cdots
\xrightarrow{} 0 \xrightarrow{} R_j \xrightarrow{} \oplus_{i \in
L_{n-1}} R_i \xrightarrow{} \oplus_{i \in L_{n-2}} R_i \xrightarrow{}
\cdots $$ So $X$ is a nonzero bounded above complex of finitely
generated free modules.

Now following the method of Step 2 in the proof of
Theorem~\ref{theorem-filtrations for complexes of projectives} see
that we can write any complex of free modules as a continuous union of
bounded above complexes of finitely generated free modules.
\end{proof}

We would now like to find complexes in
$\class{W}=\rightperp{\class{C}}$.  Since $\class{C}$ consists of
complexes of projectives, this is equivalent to finding complexes $Y$
such that every chain map from a complex of projectives $C$ to $Y$ is
chain homotopic to $0$.  In view of the above lemma, we can assume
that $C$ is a bounded above complex of finitely generated free
modules.  Of course, $Y$ must be exact, since $\class{C}$ contains
$S^{n} (R)$ for all $n$.  One might guess that $Y$ should be slightly
better than exact; the content of Theorem~\ref{thm-pure} says that any
pure exact $Y$ is in $\class{W}$.

\begin{lemma}\label{lem-pure}
Given a ring $R$, suppose $X$ is a bounded complex of finitely
presented modules and $Y$ is a pure exact complex.  Then every chain
map $f\mathcolon X\xrightarrow{}Y$ is chain homotopic to $0$.
Similarly, if $Z$ is a pure exact complex of right $R$-modules, then
$Z\otimes_{R}X$ is exact.  
\end{lemma}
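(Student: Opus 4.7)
The plan is to prove both parts simultaneously by induction on the length (the number of nonzero terms) of the bounded complex $X$.

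\textbf{Base case.} Suppose $X = S^k(M)$ is concentrated in a single degree $k$, with $M$ finitely presented. For the first statement, a chain map $f\colon S^k(M) \to Y$ is the same as a map $\bar f\colon M \to Z_k Y$, and a null-homotopy is a lift of $\bar f$ along the surjection $Y_{k+1} \twoheadrightarrow Z_k Y$ coming from exactness of $Y$. Applying $\Hom_R(M,-)$ to the pure short exact sequence
\[
0 \to Z_{k+1}Y \to Y_{k+1} \to Z_kY \to 0
\]
yields the desired surjection $\Hom_R(M, Y_{k+1}) \to \Hom_R(M, Z_kY)$, precisely because $M$ is finitely presented; hence every such $f$ is null-homotopic. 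For the tensor statement, write $C_n := \ker(d_n^Z)$ to avoid notational clash. The pure short exact sequence $0 \to C_n \to Z_n \to C_{n-1} \to 0$ remains exact after tensoring with any module, and a direct identification yields $Z_n(Z \otimes_R M) = C_n \otimes_R M = B_n(Z \otimes_R M)$, so $Z \otimes_R M$, and hence its shift $Z \otimes_R S^k(M)$, is exact.

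\textbf{Inductive step.} Suppose $X$ has support $[a,b]$ with $b>a$. Since $X_{a-1} = 0$, the differential $d_a$ vanishes, and the sphere $S^a(X_a)$ embeds as a subcomplex of $X$ fitting into a short exact sequence
\[
0 \to S^a(X_a) \to X \to X'' \to 0,
\]
where $X''_a = 0$ and $X''_i = X_i$ for $i \neq a$; thus $X''$ is a bounded complex of finitely presented modules of strictly shorter length. This sequence is degreewise split (in each degree it is either $0 \to M \to M \to 0 \to 0$ or $0 \to 0 \to M \to M \to 0$), so applying $\homcomplex(-,Y)$ or $Z \otimes_R -$ preserves its exactness, producing a short exact sequence of complexes of abelian groups. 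The corresponding long exact sequence in homology, together with the base case and the inductive hypothesis applied to $X''$, forces $H_n \homcomplex(X,Y) = 0$ and $H_n(Z \otimes_R X) = 0$ for all $n$, giving in particular the statements of the lemma.

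The inductive bookkeeping is routine; the essential step is the base case, which hinges on the two equivalent formulations of pure exactness — via $\Hom(\text{f.p.},-)$ preserving short exactness for the first statement, and via $- \otimes_R -$ preserving short exactness for the second. The main care needed is in the tensor base case: one must verify that the maps $C_n \otimes_R M \hookrightarrow Z_n \otimes_R M$ and $Z_n \otimes_R M \twoheadrightarrow C_{n-1} \otimes_R M$ remain respectively injective and surjective after tensoring, which is precisely what purity of the cycle sequences of $Z$ guarantees.
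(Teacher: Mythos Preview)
Your proof is correct and follows essentially the same approach as the paper: induction on the number of nonzero entries, peeling off a single degree via a degreewise split short exact sequence and invoking the long exact homology sequence. The only cosmetic difference is that the paper removes the \emph{top} nonzero degree (so the sphere appears as a quotient $0\to A\to X\to S^m X_m\to 0$), whereas you remove the \emph{bottom} nonzero degree (so the sphere appears as a subcomplex $0\to S^a X_a\to X\to X''\to 0$); either works equally well. Your base case is written out in more detail than the paper's, which simply says ``the result follows by definition of pure exactness.''
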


The second statement is actually true for any bounded complex $X$,
even if the entries are not finitely presented.  

\begin{proof}
We begin with the first statement.  Let $n$ be the number of $i$ for
which $X_{i}$ is nonzero.  If $n=1$, the result follows by definition
of pure exactness.  In general, let $m$ be the largest degree $i$ for which
$X_{i}$ is nonzero, and let $A$ be the subcomplex of $X$ with
$A_{i}=X_{i}$ for $i<m$ and $A_{m}=0$.  The short exact sequence
\[
0 \xrightarrow{} A \xrightarrow{} X \xrightarrow{}  S^{m}X_{m}
\xrightarrow{} 0
\]
is degreewise split, so the induced sequence
\[
0 \xrightarrow{}\homcomplex (S^{m}X_{m},Y) \xrightarrow{} \homcomplex (X,Y)
\xrightarrow{} \homcomplex (A,Y) \xrightarrow{} 0
\]
is still exact.  The long exact sequence in homology now gives us the
result by induction.

The proof of the second statement is virtually identical.  The base
case of $n=1$ again follows by definition of pure exactness, and the
proof of the induction step is the same except for replacing $\homcomplex
(-,Y)$ by $Z\otimes_{R}-$.  
\end{proof}

We can now prove Theorem~\ref{thm-pure}.  

\begin{proof}[Proof of Theorem~\ref{thm-pure}]
We first show that $\homcomplex (C,Y)$ is exact for $C$ a complex of
projectives and $Y$ pure exact.  In view of Theorem~\ref{thm-bounded
above complexes of finitely generated frees cogenerate}, we may assume
that $C$ is a bounded above complex of finitely generated free
modules.  It suffices to show that any chain map $f\mathcolon
C\xrightarrow{}Y$ is chain homotopic to $0$.  We construct a chain
homotopy $D_{n}\mathcolon C_{n}\xrightarrow{}Y_{n+1}$ with
$dD_{n}+D_{n-1}d=f_{n}$ by downwards induction on $n$.  Since $C$ is
bounded above, we can take $D_{n}=0$ for large $n$ to begin the
induction.  So we suppose that $D_{i}$ has been defined for $i\geq n$
and that $dD_{n+1}+D_{n}d=f_{n+1}$.

We will first modify $D_{n}$ to a new map $\widetilde{D}_{n}$ so that
this identity still holds, and then construct $D_{n-1}$ such that
$d\widetilde{D}_{n}+D_{n-1}d=f_{n}$. Note first that
\[
(f_{n}-dD_{n})d = d (f_{n+1}-D_{n}d)=d^{2}D_{n+1}=0,
\]
so there is an induced map $g_{n}\mathcolon
C_{n}/B_{n}C\xrightarrow{}Y_{n}$.  Now consider the bounded complex
$X$ of finitely presented modules with $X_{n}=C_{n}/B_{n}C$,
$X_{n-1}=C_{n-1}$, $X_{n-2}=C_{n-1}/B_{n-1}C$, and $X_{i}=0$ for all
other $i$.  There is a chain map $g\mathcolon X\xrightarrow{}Y$ that
is $g_{n}$ in degree $n$, $f_{n-1}$ in degree $n-1$, and $f_{n-2}d$ in
degree $n-2$.  By the preceding lemma, this chain map must be chain
homotopic to $0$.  This gives us maps $D_{n}'\mathcolon
C_{n}/B_{n}C\xrightarrow{}Y_{n+1}$ and $D_{n-1}\mathcolon
C_{n-1}\xrightarrow{}Y_{n}$ such that
$dD_{n}'+D_{n-1}d=f_{n}-dD_{n}$.  Put another way, this means that
\[
d (D_{n}+D_{n}')+D_{n-1}d =f_{n},
\]
as required.  Furthermore, we still have the required relation
\[
dD_{n+1} + (D_{n}+D_{n}')d=f_{n+1}
\]
because $D_{n}'d=0$.

For the second half of the theorem, we need to show that
$Z\otimes_{R}C$ is exact, where $Z$ is any pure exact complex of right
$R$-modules.  Since homology commutes with direct limits, we can again
assume that $C$ is a bounded above complex of finitely generated free
modules.  But any bounded above complex is a direct limit of its
truncations $C^{-n}$, where $(C^{-n})_{k}=C_{k}$ if $k> -n$,
$(C^{-n})_{-n}=B_{-n}C$, and $(C^{-n})_{k}$ is $0$ otherwise.  Each
truncation $C^{-n}$ is a bounded complex, so $Z\otimes_{R}C^{-n}$ is
exact by Lemma~\ref{lem-pure}.  Therefore the direct limit
$Z\otimes_{R}C$ is exact.  
\end{proof}

Noew recall the character module of of an $R$-module $M$ is $\Hom_{\Z}
(M, \Q )$.  

\begin{proposition}\label{prop-dual-exact}
Let $R$ be a ring, $C$ a chain complex, and $M$ a right $R$-module.
Then $M\otimes_{R}C$ is exact if and only if $\Hom_{R}(C, M^{+})$ is
exact.  
\end{proposition}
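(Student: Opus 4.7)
The plan is to use the standard tensor-hom adjunction together with the fact that $\Q$ is an injective cogenerator of the category of abelian groups, which converts the question of exactness of $M\otimes_R C$ into the question of exactness of its character complex $(M\otimes_R C)^+$, and then identifies that character complex with $\homcomplex(C,M^+)$.

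The first step is to establish, for a chain complex $C$ of $R$-modules and a right $R$-module $M$, a natural isomorphism of chain complexes of abelian groups
\[
\homcomplex(C, S^{0}(M^{+})) \cong (M \otimes_{R} C)^{+},
\]
where the right-hand side is the cochain complex obtained by applying $\Hom_{\Z}(-,\Q)$ to $M\otimes_{R}C$, reindexed as a chain complex. In each degree this is just the classical adjunction
\[
\Hom_{\Z}(M\otimes_{R} C_{k},\Q) \cong \Hom_{R}(C_{k},\Hom_{\Z}(M,\Q))= \Hom_{R}(C_{k},M^{+}),
\]
and one checks compatibility of the differentials (which is routine up to the usual sign conventions built into $\homcomplex$).

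The second step is to invoke the fact that $\Q$ is a faithfully exact cogenerator for $\Z$-modules, so that for any chain complex $X$ of abelian groups, $X$ is exact if and only if $X^{+}$ is exact. Applying this to $X=M\otimes_{R}C$ and combining with the isomorphism above gives
\[
M\otimes_{R}C \text{ is exact} \iff (M\otimes_{R}C)^{+}\text{ is exact} \iff \homcomplex(C,M^{+})\text{ is exact},
\]
which is the claim.

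There is no serious obstacle here; the only thing that requires any care is bookkeeping with the indexing and signs when identifying $\homcomplex(C,S^{0}(M^{+}))$ with the reindexed character complex, but both are computed degreewise from the same adjunction and the differentials match by a direct check on each summand.
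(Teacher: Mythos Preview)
Your proof is correct and follows essentially the same approach as the paper: the paper's proof is a terse two-line version of exactly what you wrote, invoking the adjunction isomorphism $(M\otimes_{R}C)^{+}\cong \Hom_{R}(C,M^{+})$ and the fact that a complex of abelian groups is exact if and only if its character complex is exact.
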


\begin{proof}
Note that $M\otimes_{R}C$ is exact if and only if
$(M\otimes_{R}C)^{+}$ is exact, and 
\[
(M\otimes_{R}C)^{+}\cong \Hom_{R} (C, M^{+}).  
\]
\end{proof}

If $R$ is a ring, $\cat{C}$ is a collection of right $R$-modules, and
$\cat{D}$ is a collection of left $R$-modules, we say that
$(\cat{C},\cat{D})$ is a \textbf{duality pair} if $M\in \cat{C}$ if
and only if $\cat{M}^{+}$ is in $\cat{D}$, and $N\in \cat{D}$ if and
only if $N^{+}\in \cat{C}$.  

\begin{theorem}\label{thm-dual-exact}
Let $R$ be a ring, and suppose $(\cat{C},\cat{D})$ is a duality pair
such that $\cat{D}$ is closed under pure quotients.  Let $C$ be a
complex of projectives.  Then $M\otimes_{R}C$ is exact for all $M\in
\cat{C}$ if and only if $\Hom_{R} (C,N)$ is exact for all $n\in
\cat{D}$.  
\end{theorem}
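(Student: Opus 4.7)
The plan is to split the proof into the two implications.  The easy direction $(\Leftarrow)$ uses only the duality pair: if $\Hom_R(C,N)$ is exact for every $N\in\cat{D}$, then for $M\in\cat{C}$ the duality pair gives $M^+\in\cat{D}$, so $\Hom_R(C,M^+)$ is exact, and $M\otimes_R C$ is exact by Proposition~\ref{prop-dual-exact}.  The pure-quotient hypothesis is not needed for this direction.

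For the forward direction $(\Rightarrow)$, assume $M\otimes_R C$ is exact for every $M\in\cat{C}$ and fix $N\in\cat{D}$.  The key construction is a pure-exact right resolution of $N$ by biduals.  Set $N_0=N$ and inductively $N_{k+1}=N_k^{++}/N_k$.  The embedding $N_k\hookrightarrow N_k^{++}$ is pure, and a double application of the duality pair gives $N_k^{++}=(N_k^+)^+\in\cat{D}$; closure of $\cat{D}$ under pure quotients then places $N_{k+1}\in\cat{D}$, so by induction $N_k\in\cat{D}$ for every $k\ge 0$.  Splicing the pure short exact sequences $0\to N_k\to N_k^{++}\to N_{k+1}\to 0$ yields the exact cochain complex
\[
0\to N\to N^{++}\to N_1^{++}\to N_2^{++}\to\cdots.
\]
Crucially, each $\Hom_R(C,N_k^{++})$ is exact: since $N_k^+\in\cat{C}$, the hypothesis gives $N_k^+\otimes_R C$ exact, hence $\Hom_R(C,(N_k^+)^+)=\Hom_R(C,N_k^{++})$ is exact by Proposition~\ref{prop-dual-exact}.

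Now assemble the upper half-plane double complex $X^{p,i}=\Hom_R(C_i,N_p^{++})$ for $p\ge 0$ and $i\in\Z$, with horizontal differential induced by the resolution and vertical by $d^C$.  Projectivity of each $C_i$ makes every row exact and augmented by $\Hom_R(C_i,N)$, while the previous paragraph exhibits each column as exact.  On the direct-sum total complex $T$, the column-filtration spectral sequence has $E_1=0$ by column acyclicity, giving $H(T)=0$; the row-filtration spectral sequence has $E_1$ concentrated in column $p=0$ equal to $\Hom_R(C,N)$ (the kernel supplied by the augmented exact rows), so it degenerates at $E_2^{0,q}=H^q(\Hom_R(C,N))$.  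Matching the two computations of $H(T)$ forces $H^q(\Hom_R(C,N))=0$ for every $q$, so $\Hom_R(C,N)$ is exact.  The main technical point is strong convergence of the row spectral sequence for unbounded $C$, which follows from the upper half-plane structure $p\ge 0$ and the finite-support property of direct-sum totalization; if preferred, one can bypass this by observing that $H^q(\Hom_R(C,N))$ depends only on $C_{q-1},C_q,C_{q+1}$, so the argument on each brutal truncation $C_{\le M}$ (which makes the double complex live in a quadrant) suffices.
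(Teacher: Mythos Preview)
Your proof is correct and takes a genuinely different route from the paper's.  Both arguments begin identically: the easy direction via Proposition~\ref{prop-dual-exact}, and for the hard direction the same pure bidual resolution $0\to N\to N^{++}\to N_1^{++}\to\cdots$ together with the observation that each $\Hom_R(C,N_k^{++})$ is exact.  At that point the two diverge.  The paper packages the resolution as a pure exact complex $X$ and appeals to Theorem~\ref{thm-pure} to get $\homcomplex(C,X)$ exact; it then deals with the bounded-above complex $Y$ of biduals by induction and inverse limits of truncations.  You bypass Theorem~\ref{thm-pure} entirely and run a double-complex argument directly: exact columns force $\text{Tot}^{\oplus}$ to be acyclic, and exact rows together with the augmentation identify $H^*(\Hom_R(C,N))$ with $H^*(\text{Tot}^{\oplus})=0$.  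Your approach is more self-contained for this statement, since Theorem~\ref{thm-pure} itself rests on the nontrivial machinery of Theorem~\ref{thm-bounded above complexes of finitely generated frees cogenerate}; the paper's approach, in turn, establishes Theorem~\ref{thm-pure} as a result of independent interest.

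One point deserves tightening.  Your convergence claim for the row spectral sequence is correct but not an off-the-shelf citation: a right-half-plane double complex with exact rows has acyclic $\text{Tot}^{\Pi}$ by the standard acyclic assembly lemma, but you are using $\text{Tot}^{\oplus}$.  The statement is nonetheless true, and your parenthetical reason (finite support in $\text{Tot}^{\oplus}$ plus $p\ge 0$) is exactly the mechanism.  The cleanest way to make it rigorous is to augment the double complex by the column $\Hom_R(C_i,N)$ at $p=-1$, so that all rows become exact; a cycle in $\text{Tot}^{\oplus}$ of the augmented complex has finite support with some maximal $p_0$, and row exactness lets you peel off the top term and strictly decrease $p_0$, terminating at $p=-1$ where injectivity of $N\hookrightarrow N^{++}$ finishes it.  Combined with the acyclicity of $\text{Tot}^{\oplus}$ of the unaugmented complex (exact columns), the short exact sequence of total complexes gives $\Hom_R(C,N)$ exact.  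Your truncation fallback as written is slightly off: truncating to $C_{\le M}$ does not put the double complex in a quadrant (the $i$-index is still unbounded below), and the truncated columns lose exactness at the boundary $i=M$; the argument can be salvaged, since the column spectral sequence then has $E_1$ concentrated on the single row $i=M$ and hence contributes nothing in total degrees below $M$, but this needs to be said.
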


\begin{proof}
In view of Proposition~\ref{prop-dual-exact}, if $\Hom_{R} (C,N)$ is
exact for all $N\in \cat{D}$, then $M\otimes_{R}C$ is exact for all
$M\in \cat{C}$.  Conversely, suppose $M\otimes_{R}C$ is exact for all
$m\in \cat{C}$.  Then if $N\in \cat{D}$,$N^{+}\otimes_{R}C$ is exact,
and so Proposition~\ref{prop-dual-exact} tells us that $\Hom (C,
N^{++})$ is exact.  We conclude that $\Hom_{R} (C, K)$ is exact for
all $K\in \cat{D}^{++}$, and we note that $\cat{D}^{++}\subseteq
\cat{D}$ since $(\cat{C},\cat{D})$ is a duality pair.  

Now, for any $N$, the natural map $N\xrightarrow{}N^{++}$ is a pure
monomorphism~\cite[Proposition~5.3.9]{enochs-jenda-book}.  So if $N\in
\cat{D}$, the quotient $N^{++}/N$ is also in $\cat{D}$ since $\cat{D}$
is closed under pure quotients.  We can therefore create a resolution
of $N\in \cat{D}$ by elements of $\cat{D}^{++}$.  That is, we can
find a pure exact chain complex $X$ where $X_{i}=0$ for $i>0$,
$X_{0}=N$, and each of the $X_{i}$ for $i<0$ is in $\cat{D}^{++}$.
This gives a short exact sequence 
\[
0 \xrightarrow{} S^{0}N \xrightarrow{} X \xrightarrow{} Y
\xrightarrow{} 0
\]
in which $X$ is pure exact and $Y$ is a bounded above complex with
entries in $\cat{D}^{++}$.  Theorem~\ref{thm-pure} tells us that
$\homcomplex (C,X)$ is exact.  So to complete the proof it will suffice
to show that $\homcomplex (C,Y)$ is exact.  If $Z$ is a \emph{bounded}
complex with entries in $\cat{D}^{++}$,
then we can prove $\homcomplex (C,Z)$ is exact by induction on the number
of nonzero entries in $Z$.  In general, any bounded above complex $Y$
is the inverse limit of its truncations $Y^{-n}$ for $n\in \Z$, where
$(Y^{-n})_{i}=Y_{i}$ for $i\geq -n$ and is $0$ otherwise.  This is a
very simple inverse limit, and so it is easy to check that $\homcomplex
(C,Y)=\invlim \homcomplex (C, Y^{-n})$.  It follows that $\homcomplex (C,Y)$
is exact, completing the proof.  
\end{proof}

We now recover Theorem~\ref{thm-Inj-exact-level}.

\begin{corollary}\label{cor-Inj-exact-level}
For any ring $R$, a complex of projectives $C$ is AC-acyclic if and
only if it is firmly acyclic.  If level $R$-modules all have
finite projective dimension, these conditions are equivalent to
$\Hom_{R} (C,P)$ being exact for all projective $R$-modules $P$.
\end{corollary}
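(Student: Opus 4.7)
The plan is to derive the first equivalence as an immediate application of Theorem~\ref{thm-dual-exact}, and then to handle the second equivalence by a routine dimension-shifting argument using the long exact sequence of $\homcomplex(C,-)$.

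For the equivalence of AC-acyclic and firmly acyclic, I would take $\cat{C}$ to be the class of absolutely clean right $R$-modules and $\cat{D}$ to be the class of level left $R$-modules. By Theorem~\ref{thm-duality}, $(\cat{C},\cat{D})$ is a duality pair, and by Proposition~\ref{prop-level}(b) the class $\cat{D}$ is closed under pure quotients. Thus Theorem~\ref{thm-dual-exact} applies and tells us that $I \otimes_R C$ is exact for every absolutely clean right $R$-module $I$ if and only if $\Hom_R(C, F)$ is exact for every level left $R$-module $F$. This is precisely the equivalence of $C$ being AC-acyclic and firmly acyclic.

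For the second part, suppose every level left $R$-module has finite projective dimension. Since projective modules are level (projectives are flat, and in any case Proposition~\ref{prop-level}(c) says the level modules contain the projectives), firmly acyclic trivially implies that $\Hom_R(C,P)$ is exact for all projective $P$. For the converse, let $\cat{E}$ be the class of left $R$-modules $F$ such that $\Hom_R(C,F)$ is exact. Because $C$ is degreewise projective, applying $\Hom_R(C,-)$ to any short exact sequence of modules yields a short exact sequence of chain complexes, whence a long exact sequence in homology. A standard diagram chase then shows that $\cat{E}$ is thick: closed under direct summands, extensions, kernels of epimorphisms, and cokernels of monomorphisms. If $\cat{E}$ contains the projectives, then an induction on projective dimension (using that any module of projective dimension $n$ sits in a short exact sequence with a projective and a module of projective dimension $n-1$) shows that $\cat{E}$ contains every module of finite projective dimension. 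Under our hypothesis, this includes every level module, so $C$ is firmly acyclic.

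The main obstacle is really just assembling the pieces; once Theorem~\ref{thm-dual-exact} is in hand, the argument is essentially formal. The technical heart of the corollary lies in Theorem~\ref{thm-dual-exact} itself, and ultimately in Theorem~\ref{thm-pure} about pure exact complexes, rather than in the present deduction.
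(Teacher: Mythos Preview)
Your proof is correct and follows the same route as the paper: apply Theorem~\ref{thm-dual-exact} to the duality pair of absolutely clean and level modules (Theorem~\ref{thm-duality}) for the first part, and use that $\{F : \Hom_R(C,F)\text{ exact}\}$ is thick to pass from projectives to modules of finite projective dimension for the second. Your write-up is in fact more careful than the paper's, since you explicitly verify the pure-quotient hypothesis of Theorem~\ref{thm-dual-exact} via Proposition~\ref{prop-level}.
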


\begin{proof}
The classes of absolutely clean modules and level modules form a
duality pair by Theorem~\ref{thm-duality}.  For the second statement,
note that the collection of all modules $M$ such that $\Hom_{R} (C,M)$
is exact is a thick subcategory, because $C$ is a complex of
projectives.  So if it contains projective modules, it contains all
modules of finite projective dimension.  
\end{proof}


\providecommand{\bysame}{\leavevmode\hbox to3em{\hrulefill}\thinspace}
\providecommand{\MR}{\relax\ifhmode\unskip\space\fi MR }
\providecommand{\MRhref}[2]{%
  \href{http://www.ams.org/mathscinet-getitem?mr=#1}{#2}
}
\providecommand{\href}[2]{#2}


\end{document}